\documentclass[reqno,12pt]{amsart}

\parskip = \bigskipamount
\usepackage{amssymb}
\usepackage{amscd}
\usepackage[all]{xy}
\usepackage{bbm}
\usepackage{mathrsfs}
\usepackage{enumerate}
\usepackage{tikz}
\usepackage[margin=1.5in]{geometry}
\usepackage{stmaryrd}
\usepackage{etoolbox}
\usepackage{array}

\newcommand{\R}{\mathbb{R}}

\newcommand{\inv}{^{-1}}

\newcommand{\eps}{\varepsilon}

\newcommand{\del}{\nabla}

\newcommand{\lap}{\Delta}

\newcommand{\bd}{\partial}
\newcommand{\cl}{\overline}

\newcommand{\eval}{\bigg\vert}

\newcommand{\la}{\langle}
\newcommand{\ra}{\rangle}

\renewcommand{\div}{\operatorname{div}}

\newcommand{\grad}{\del}
\newcommand{\vol}{\operatorname{Vol}}

\newcommand{\f}{\colon}
\newcommand{\area}{\operatorname{Area}}

\theoremstyle{plain}
\newtheorem{theorem}{Theorem}
\newtheorem{corollary}[theorem]{Corollary}
\newtheorem{prop}[theorem]{Proposition}
\newtheorem{lem}[theorem]{Lemma}

\theoremstyle{definition}
\newtheorem{defn}[theorem]{Definition}
\newtheorem{rem}[theorem]{Remark}

\begin{document}

\title[PMCs in Non-compact manifolds]{Prescribed Mean Curvature Min-Max Theory in Some Non-compact Manifolds}
\author{Liam Mazurowski}
\address{Cornell University, Department of Mathematics, Ithaca, New York 14850}
\email{lmm334@cornell.edu}

\begin{abstract}
This paper develops a technique for applying one-parameter prescribed mean curvature min-max theory in certain non-compact manifolds.  We give two main applications.  First, fix a dimension $3 \le n+1 \le 7$ and consider a smooth function $h\f \R^{n+1}\to \R$ which is asymptotic to a positive constant near infinity.  We show that, under certain additional assumptions on $h$, there exists a closed hypersurface $\Sigma$ in $\R^{n+1}$ with mean curvature prescribed by $h$.  Second, let $(M^3,g)$ be an asymptotically flat 3-manifold and fix a constant $c > 0$.  We show that, under an additional assumption on $M$, it is possible to find a closed surface $\Sigma$ of constant mean curvature $c$ in $M$. 
\end{abstract}

\maketitle

\section{Introduction}

\subsection{PMC Surfaces in Euclidean Space} In his list of problems in differential geometry \cite{Y}, Yau proposed the following question: 
\begin{itemize}
\item[] {For which functions $h\f \R^3\to \R$ does there exist an embedded 2-sphere with mean curvature prescribed by $h$?}
\end{itemize}
In this paper we consider a closely related question.  Namely, given a dimension $3 \le n+1 \le 7$, for which functions $h\f \R^{n+1}\to \R$ does there exist a closed, embedded hypersurface of mean curvature prescribed by $h$? Throughout the rest of the paper we will refer to such a surface as an $h$-PMC\footnote{We do not require an $h$-PMC to be topologically a sphere.  This is a natural assumption from the point of view of geometric measure theory.}.

In dimension three, Yau's question can be investigated using the mapping method. Indeed,  branched immersed $h$-PMCs can be constructed by looking for maps $u\f S^2\to \R^3$ which are critical for the Dirichlet energy minus an $h$-weighted volume term.  
 Caldiroli and Musina have used this approach to give sufficient conditions for the existence of an $h$-PMC. They consider a function $h\f \R^3\to \R$ which is asymptotic to a constant at infinity and whose radial derivative satisfies a certain decay condition.  In \cite{CalMu},  using a one parameter min-max argument, they prove that an $h$-PMC exists provided an appropriate mountain pass energy level is not attained by spheres at infinity.  In a later paper \cite{CalMu1}, they show that even when the mountain pass energy level is attained by spheres at infinity, it is still sometimes possible to obtain an $h$-PMC via a higher parameter min-max argument.

The mapping method does not work in higher dimensions.  However, the techniques of geometric measure theory often provide a viable alternative.  Min-max schemes based on geometric measure theory were developed for the area functional in the early 1980s by Almgren \cite{A}, Pitts \cite{P}, and Schoen-Simon \cite{SS}.
They have since been used with great success in the construction of minimal surfaces by Marques, Neves, Song and others (see for example \cite{IMN}, \cite{Li}, \cite{LMN}, \cite{MNwil}, \cite{MN}, \cite{MNric}, \cite{MN2}, \cite{MNS}, \cite{Song}, \cite{SZ}).  Min-max schemes capable of finding prescribed mean curvature surfaces were developed more recently by Zhou and Zhu (see \cite{ZZ}  and \cite{ZZ1}).  Their prescribed mean curvature min-max theory has already had a number of applications.  For example, Zhou \cite{Z} used it to regularize the area functional, and thus obtained a proof of the Multiplicity One Conjecture of Marques and Neves.  (Chodosh and Mantoulidis \cite{CM} had previously proven the Multiplicity One Conjecture in ambient dimension 3 using the Allen-Cahn theory.) Also Dey \cite{Dey} used the PMC min-max theory to produce many surfaces of constant mean curvature $c$ in a given manifold $M$ when $c$ is small, and the author used it to give a new construction of constant mean curvature doublings of minimal surfaces \cite{Maz1}.  For a more complete overview of the PMC min-max theory, see Zhou's survey \cite{ZhouSurvey}. 

In Section \ref{pmc} of this paper, we use Zhou and Zhu's min-max theory to investigate the existence of $h$-PMCs on $\R^{n+1}$ for $3 \le n+1\le7$.  Our first main result can be viewed as an analog of \cite{CalMu} in higher dimensions. 

\begin{theorem}
\label{main}
Fix a dimension $3 \le n+1\le 7$ and let $h\f \R^{n+1}\to \R$ be a smooth function satisfying hypotheses $(\operatorname{H1})$-$(\operatorname{H4})$.  Then there exists a smooth, almost embedded $h$-PMC in $\R^{n+1}$. 
\end{theorem}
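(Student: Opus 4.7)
The plan is to run Zhou--Zhu's one-parameter PMC min-max theory inside a compact exhaustion of $\R^{n+1}$ and then take a limit, using hypothesis (H4) to prevent the resulting critical hypersurfaces from drifting off to infinity. One works with the weighted functional $\mathcal{A}^h(\Omega) = \mathcal{H}^n(\partial^* \Omega) - \int_\Omega h\, d\mathcal{H}^{n+1}$ on Caccioppoli sets; critical points of $\mathcal{A}^h$ are exactly the hypersurfaces of mean curvature prescribed by $h$. A natural one-parameter sweepout is provided by round balls $\Omega_t = B_{r(t)}(0)$ with $r(0)=0$ and $r(1)$ large, and because $h$ is asymptotic to a positive constant $h_\infty$ one has $\mathcal{A}^h(B_r) \to -\infty$ as $r \to \infty$, so the mountain pass topology is present. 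Define $L$ to be the infimum, over admissible sweepouts, of the maximum value of $\mathcal{A}^h$ along the sweepout.

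In the spirit of the Caldiroli--Musina dimension-three results, hypotheses (H1)--(H4) should yield a strict inequality $L < L_\infty$, where $L_\infty$ is the mountain pass value of the constant-coefficient functional $\mathcal{A}^{h_\infty}$, explicitly the $\mathcal{A}^{h_\infty}$-value of the unique round critical ball of radius $n/h_\infty$. This strict inequality encodes the idea that the non-constant $h$ is ``geometrically more favorable'' than its asymptotic limit, and it is what will ultimately control the behavior of the min-max sequence at infinity.

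For each large $R > 0$, restrict the sweepout to $B_R$ and apply Zhou--Zhu's PMC min-max theory inside $B_R$, after a smooth cutoff or perturbation of $h$ near $\partial B_R$ that forces critical points to lie strictly in the interior. This produces smooth, almost embedded $h$-PMC hypersurfaces $\Sigma_R \subset B_R$ with $\mathcal{A}^h$-value approaching $L$ and uniform area and curvature bounds. As $R \to \infty$, either $\Sigma_R$ subconverges in $\R^{n+1}$ to an almost embedded hypersurface $\Sigma$, or $\Sigma_R$ translates to infinity, in which case the rescaled limit is a round sphere of mean curvature $h_\infty$ realizing energy $L_\infty$. The latter alternative contradicts $L < L_\infty$, so the former must hold; standard geometric measure theory compactness combined with Zhou--Zhu's regularity theorem then gives the desired smooth, almost embedded $h$-PMC in $\R^{n+1}$.

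The main obstacle is precisely this concentration--compactness step: quantitatively ruling out drift to infinity, and more generally any multi-bubble degeneration in which pieces of $\Sigma_R$ detach and escape in different directions. Even assuming a uniform $\mathcal{A}^h$ bound, mass could in principle be lost at infinity, so one must carefully track how much energy can concentrate in each translating bubble and confirm that the strict inequality $L < L_\infty$ cannot be saturated by any combination of escaping pieces. Formulating the correct conditions (H1)--(H4) that force this strict inequality, and turning it into a quantitative bound on the spatial position of $\Sigma_R$, constitutes the technical heart of the theorem.
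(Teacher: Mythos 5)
Your overall strategy---cut off $h$ outside larger and larger balls, run Zhou--Zhu min-max for the truncated functionals, and use the strict inequality in (H4) to rule out drift to infinity---is the same as the paper's. But the proposal asserts rather than proves the two steps that constitute the actual content of the argument, and as written both have genuine gaps. First, the uniform area bound on the approximate min-max surfaces $\Sigma_R$ is not automatic. The paper obtains it by computing the first variation of $A^{h_R}$ along the homotheties $s\Omega_R$, which produces the term $\int_{\Omega_R} h\,\grad\zeta_R\cdot X$ with $X(x)=x$; since $|X|\sim R$ on the transition annulus, this term is only controlled if one knows $\int_{\Omega_R}|\grad\zeta_R|\,h\lesssim 1/R$. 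That smallness is \emph{not} a consequence of the min-max construction alone: it is extracted via Str\"uwe's monotonicity trick, by choosing a sequence of radii $R_j$ at which the decreasing function $R\mapsto\omega(R)$ has derivative $\geq -2/R_j$ and building mountain pass paths in a restricted ``nice'' class whose near-maximal slices have controlled mass in the transition region. Hypothesis (H2) then enters to absorb the remaining term $\int_{\Omega_R}\zeta_R\,\grad h\cdot X$. Your proposal never explains where the area bound comes from, and "restricting the sweepout to $B_R$" does not supply it.

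Second, the blow-up analysis of an escaping component is subtler than "the rescaled limit is a round sphere of mean curvature $h_\infty$." An escaping component may straddle the transition annulus where the cutoff is active, so after translating its farthest point to the origin the prescription functions converge not to the constant $n$ but to the slab function $h_\infty(x)=n\,\zeta(x\cdot\nu+\tau)$. One must classify closed PMCs for this \emph{non-constant} limit: differentiating $A^{h_\infty}$ along translations in the $\nu$ direction and using the monotonicity of $\zeta$ forces the limit set into the region $\{h_\infty=n\}$, and only then does Alexandrov's theorem give a union of unit spheres, each carrying energy at least $A^n(B_1)$, contradicting (H4). (Multi-bubbling is handled because the uniform area bound caps the number of components.) Also note a small conceptual slip: (H4) \emph{is} the strict inequality $\omega<A^c(B_{n/c})$ by hypothesis; it is not something to be derived from (H1)--(H3). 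Since you explicitly defer the "technical heart" rather than execute it, the proposal is an outline of the correct approach rather than a proof.
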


Here a surface $\Sigma$ is called almost embedded provided in a neighborhood of each point $p\in \Sigma$ either 
\begin{itemize}
\item[(i)] $\Sigma$ is embedded, or
\item[(ii)] $\Sigma$ decomposes into an ordered union of embedded sheets. 
\end{itemize} 
Roughly speaking, the hypotheses (H1)-(H4) are as follows:
\begin{itemize}
\item[(H1)] The function $h$ is asymptotic to a positive constant $c$ at infinity. 
\item[(H2)] The radial derivative of $h$ is smaller than $c/\| x\|$ near infinity.
\item[(H3)] The function $h$ is admissible for PMC min-max theory. 
\item[(H4)] The mountain pass energy level for $h$ is not attained by  spheres at infinity.
\end{itemize}  
See Section \ref{pmc} for the precise formulation of these hypotheses.

It is worth comparing Theorem \ref{main} with the result of \cite{CalMu} in the case that $n+1 = 3$.  First, our assumption (H2)  is weaker than the corresponding assumption in \cite{CalMu} which asks that the radial derivative be bounded by $c/\|x\|^2$ everywhere.  Also the $h$-PMCs we obtain are almost embedded, whereas the $h$-PMCs in \cite{CalMu} are branched immersed. On the other hand, the $h$-PMCs constructed in \cite{CalMu} have the topology of $S^2$ whereas there is no control over the topology of the surfaces produced by Theorem \ref{main}.

\subsection{CMC Surfaces in Asymptotically Flat Manifolds} The proof of Theorem 1 uses a cutoff trick to reduce min-max on $\R^{n+1}$ to min-max on a sequence of larger and larger balls. We are hopeful that this cutoff trick may be adapted to perform min-max in other non-compact manifolds.  As an example, in Section \ref{cmc}, we show that essentially the same argument can be used to produce constant mean curvature surfaces in some asymptotically flat manifolds.  This partially answers a question posed by Zhou in \cite{ZhouSurvey}. 

Recall that a manifold $(M^3,g)$ is called asymptotically flat if there is a compact set $K$ such that $M^3\setminus K$ is diffeomorphic to $\R^3\setminus B_1(0)$, and moreover, the metric $g$ is asymptotic to the Euclidean metric in the coordinates induced by this diffeomorphism.  Asymptotically flat manifolds are important in general relativity, where they model the gravitational field of an isolated object.
Huisken and Yau \cite{HY} showed that manifolds asymptotic to a Schwarzchild solution of positive mass admit a foliation by constant mean curvature (CMC) surfaces near infinity.  See Ye \cite{YeAF} for another proof.  This existence result has since been generalized by Eichmair-Koerber \cite{EK}, Huang \cite{Huang}, Metzger \cite{Metzger}, Nerz \cite{Nerz}, and others to allow for weaker decay of the metric to the flat metric near infinity.  The uniqueness of such CMC foliations has also been an active topic of research.  See Eichmair-Koerber \cite{EK} for a comprehensive list of references.  

Note that the spheres in these foliations near infinity are very large and hence have small mean curvature.  Ye \cite{Ye} showed that manifolds admit foliations by small CMC spheres near non-degenerate critical points of the scalar curvature.  Of course, these spheres have very large mean curvature.  Thus, in general, an asymptotically flat manifold contains closed CMCs for very small values of the mean curvature as well as very large values of the mean curvature.  It is natural to ask whether such a manifold also contains CMCs for intermediate values of the mean curvature. Our second main result shows that, under an additional assumption on $M$, this is indeed the case. 

\begin{theorem}
\label{main2}
Fix a constant $c > 0$.  Let $(M^3,g)$ be an asymptotically flat manifold which satisfies hypothesis $(H)$ for the constant $c$. Then there exists a closed, almost-embedded $c$-CMC in $M$. 
\end{theorem}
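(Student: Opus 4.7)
The plan is to adapt the cutoff trick from the proof of Theorem \ref{main} to the asymptotically flat setting, with the prescribing function replaced by a family of compactly supported smooth approximations of the constant $c$. Since $(M,g)$ is asymptotically flat, there is a compact set $K\subset M$ and a diffeomorphism $\Phi\f M\setminus K\to \R^3\setminus B_1(0)$ through which $g$ is asymptotic to the Euclidean metric. For each large $R$, first I would let $M_R = K\cup \Phi^{-1}(\overline{B_R}\setminus B_1)$, choose a smooth cutoff $h_R\f M\to\R$ equal to $c$ on $M_{R/2}$ and vanishing outside $M_R$, and arrange that $h_R$ is admissible for one-parameter PMC min-max in the sense of Zhou--Zhu.

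Next, I would run PMC min-max with prescribing function $h_R$ inside $M_R$ --- if necessary by embedding $M_R$ into a larger closed $3$-manifold obtained by attaching a suitable cap to $\partial M_R$ --- to produce a smooth almost-embedded $h_R$-PMC surface $\Sigma_R$ realising a mountain-pass value $\omega_R$. A natural upper bound for $\omega_R$ comes from a sweepout that collapses a very large coordinate sphere down to a point. A positive lower bound $\omega_R\geq \omega_0>0$ follows from the mountain-pass geometry of the weighted $\mathcal A^{h_R}$-functional together with isoperimetric considerations on balls in $M$. Hypothesis $(H)$ should play the role of $(\operatorname{H4})$ from Theorem \ref{main}: it should guarantee that for all $R$ sufficiently large, $\omega_R$ is strictly less than the mountain-pass value $E_\infty$ of the Euclidean round sphere of mean curvature $c$, preventing $\Sigma_R$ from being supported on a single round sphere escaping to the asymptotic end.

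With these bounds in hand, standard varifold compactness together with the Zhou--Zhu regularity theory should permit extraction of a subsequential limit $\Sigma = \lim_{R\to\infty}\Sigma_R$. Because $h_R\to c$ locally smoothly as $R\to\infty$, the limit $\Sigma$ will be smooth, almost-embedded, and of constant mean curvature $c$ on its support; the positivity of $\omega_0$ rules out $\Sigma$ being empty, and $(H)$ forces $\Sigma$ to be supported in a fixed compact subset of $M$, so that $\Sigma$ is genuinely closed. The main obstacle I anticipate is precisely this last step: ruling out escape of mass to infinity is where $(H)$ is used most critically. Matching $h_R$ with $\partial M_R$ so that $\Sigma_R$ remains in the interior of $M_R$ --- rather than sliding up against the boundary or drifting out along the end --- requires careful choice of cutoff, and the energy comparison with the Euclidean $c$-CMC sphere at infinity needs sufficient decay of $g$ to $g_{\text{Eucl}}$ built into the notion of asymptotic flatness. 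Once these technicalities are handled, the argument should run in parallel with the proof of Theorem \ref{main}.
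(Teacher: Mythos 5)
Your overall strategy --- cutting off the constant $c$ to a compactly supported prescription function, running Zhou--Zhu min-max on larger and larger regions, and using hypothesis (H) to keep the resulting surfaces from escaping to infinity --- is the same as the paper's. But two essential steps are missing, and without them the compactness and confinement you invoke do not go through. The first is the uniform area bound on $\Sigma_R$. Min-max only gives $\area(\Sigma_R) = \omega_R + \int_{\Omega_R} h_R$, and $\vol(\Omega_R)$ can a priori grow like $R^3$, so ``standard varifold compactness'' cannot be applied. The paper gets the bound in two stages: Struwe's monotonicity trick applied to the decreasing function $R \mapsto \omega(R)$ selects radii $R_j$ along which the min-max limits satisfy $\int_{\Omega_j}\vert\grad\zeta_{R_j}\vert \le 10/R_j$ (quantitative control in the transition annulus), and then a first-variation computation along a vector field approximating the Euclidean position field converts this, together with $A^{h_{R_j}}(\Omega_j)=\omega(R_j)$, into $\area(\Sigma_j)\le C$. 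Neither ingredient appears in your outline, and the second genuinely uses the decay rates built into the definition of asymptotic flatness.

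Second, your treatment of escape to infinity is an assertion rather than an argument, and it misstates what must be excluded: the danger is not that $\Sigma_R$ ``is a round sphere escaping to the end'' but that \emph{some component} of $\Sigma_R$ meets the transition region for every $R$. The paper rules this out by (a) a diameter bound for each component (Topping/Simon, which again requires the area and mean curvature bounds), (b) recentering such a component at points going to infinity and passing to a limit, which is a PMC for the slab function $h_\infty(x)=c\,\zeta(x\cdot\nu+\tau)$ in flat $\R^3$, and (c) a classification lemma --- translation invariance of the first variation plus Alexandrov's theorem --- showing that every closed $h_\infty$-PMC is a union of round spheres of mean curvature $c$ lying in the region where $h_\infty\equiv c$, hence carries energy at least $A^c(B_{2/c})$. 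Only then does (H), via $\omega(R)\downarrow\omega < A^c(B_{2/c})$, produce the contradiction. A smaller point: the paper confines the sweepouts using the coordinate spheres as barriers (the outward normal $N$ satisfies $\div_g N \ge 1/r$, so intersecting with a large coordinate ball decreases $A^{h_R}$), which avoids your capping construction and the admissibility and boundary-contact issues it would introduce.
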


Here hypothesis (H) asks that a certain mountain pass energy level is not attained by spheres at infinity.  See Section \ref{cmc} for the precise formulation of hypothesis (H), as well as the exact definition of an asymptotically flat manifold. 

\subsection{Outline of Proof} In the remainder of the introduction we give a heuristic outline of the proof of Theorem \ref{main}.  Let $B_r$ denote the ball of radius $r$ centered at the origin in $\R^{n+1}$.  Let $h\f \R^{n+1}\to \R$ be a smooth function which satisfies (H1)-(H3).    Without loss of generality, we can assume that $h$ is asymptotic to $n$ at infinity.    We make this choice for convenience because the mean curvature of the unit sphere in $\R^{n+1}$ is equal to $n$. 

Given an open set $\Omega$ in $\R^{n+1}$ with smooth boundary, define 
\[
A^h(\Omega) = \area(\bd \Omega) - \int_{\Omega} h.
\]
A set $\Omega$ with smooth boundary is a critical point for $A^h$ precisely when $\bd \Omega$ is an $h$-PMC.  Call a smoothly varying family of open sets $\{\Omega_t\}_{t\in[0,1]}$ a mountain pass path if $\Omega_0 = \emptyset$ and $A^h(\Omega_1) < 0$.  Define the mountain pass energy level
\[
\omega = \inf_{\{\Omega_t\}} \left[\sup_{t\in [0,1]} A^h(\Omega_t)\right]
\]
where the infimum is taken over all mountain pass paths $\{\Omega_t\}_{t\in[0,1]}$.  
The strategy is to try and obtain an $h$-PMC $\Sigma = \bd \Omega$ with $A^h(\Omega) = \omega$ by using Zhou and Zhu's prescribed mean curvature min-max theory. 

The major difficulty with this approach is that there are no barriers at infinity for the $A^h$ functional.  Indeed, without further assumptions on $h$, it may happen that min-max detects a sphere at infinity.     For example, suppose that $h$ is radially increasing.  Then $\Omega_t^x = B_{rt}(x)$ is a mountain pass path provided $\|x\|$ and $r$ are large enough.  Moreover
\[
\sup_{t\in [0,1]} A^h(\Omega_t^x) \to \omega = A^n(B_1)
\]
as $\|x\|\to \infty$.   In this example, every min-max sequence converges to a sphere of radius 1 at infinity.  

This particular behavior can be ruled out by requiring that $\omega < A^n(B_1)$.  This is hypothesis (H4).  However, even with hypothesis (H4), the lack of barriers makes it difficult to directly perform min-max.  
To get around this, we introduce a family of approximate problems which do admit barriers.  Namely, for each $R > 0$, let $h_R$ be a function which agrees with $h$ on $B_R$, falls off linearly on $B_{R+1}\setminus B_R$, and is identically 0 outside $B_{R+1}$. Then large spheres centered at the origin serve as  barriers for $h_R$-PMCs.  Consequently, it is possible to apply the above min-max scheme with $A^{h_R}$ in place of $A^h$ to produce an ${h_R}$-PMC $\Sigma_R = \bd \Omega_R$ contained in $B_{R+1}$.  Using hypothesis (H2) together with Str\"uwe's monotonicity trick, we verify that the surfaces $\Sigma_R$ have uniform area bounds.  Moreover, the surfaces $\Sigma_R$ have index at most one for $A^{h_R}$.  

To complete the proof, it suffices to show that $\Sigma_R$ does not intersect the transition region $B_{R+1}\setminus B_R$ when $R$ is large.  Suppose to the contrary that every surface $\Sigma_R$ intersects the transition region $B_{R+1}\setminus B_R$.   Translate each $\Sigma_R$ to get a new surface $\Sigma_R'$ that passes through the origin.   By the area and index bounds, these surfaces $\Sigma_R'$ converge to a limiting surface $\Sigma_\infty = \bd \Omega_\infty$ as $R\to \infty$.  The limiting surface $\Sigma_\infty$ is an $h_\infty$-PMC where the function $h_\infty$ is $n$ on one side of a slab, falls off linearly across the slab, and is 0 on the other side of the slab.   The montonicity of $h_\infty$ implies that $\Sigma_\infty$ must be a sphere of radius 1 contained in the region where $h_\infty = n$. This is a contradiction to hypothesis (H4) because $A^n(B_1) = A^{h_\infty}(\Omega_\infty) = \lim_{R\to \infty} A^{h_R}(\Omega_R) = \omega$. 

\subsection{Organization} The remainder of the paper is organized as follows.  Section 2 reviews the necessary background from geometric measure theory and sets the notation for the remainder of the paper.   Section 3 proves Theorem \ref{main} on the existence of $h$-PMCs in Euclidean space.  Finally, Section 4 proves Theorem \ref{main2} on the existence of CMCs in asymptotically flat manifolds.

\subsection{Acknowledgements} The author would like to thank Xin Zhou for introducing him to the problem and for many valuable discussions.  He would also like to thank Andr\'e Neves for his continued encouragement.

\section{Background and Notation}

We will need the following concepts from geometric measure theory.   Let $M$ be a complete Riemannian manifold.  In this paper, $M$ will always be either Euclidean $\R^{n+1}$ or an asymptotically flat manifold $(M^3,g)$.  Let $\mathcal C(M)$ denote the set of all Caccioppoli sets with compact support in $M$.  The flat topology on $\mathcal C(M)$ is induced by the metric 
$
\mathcal F(\Omega_1,\Omega_2) = \vol(\Omega_1 \Delta \Omega_2)
$
where $\Delta$ denotes the symmetric difference.  For $\Omega\in \mathcal C(M)$ the notation $\bd \Omega$ stands for the boundary of $\Omega$ in the sense of flat chains mod two.  The $\mathbf F$-topology on $\mathcal C(M)$ is given by 
\[
\mathbf F(\Omega_1,\Omega_2) = \mathcal F(\Omega_1,\Omega_2) + \mathbf F(\vert \bd \Omega_1\vert, \vert \bd \Omega_2\vert). 
\]
Here $\vert \bd \Omega_i\vert$ denotes the varifold induced by the flat chain $\bd \Omega_i$ and $\mathbf F$ is the $\mathbf F$-metric on varifolds (see \cite{P}).

Let $h \f M\to \R$ be a smooth function.

\begin{defn}
The functional $A^h\f \mathcal C(M)\to \R$ is given by 
\[
A^h(\Omega) = \area(\bd \Omega) - \int_{\Omega} h.
\] 
\end{defn}

\begin{prop}
The first variation of $A^h$ at a set $\Omega$ with smooth boundary is given by 
\[
\delta A^h\eval_\Omega(X) = -\int_{\bd \Omega} H\cdot X + \int_{\bd \Omega} h X\cdot \nu.
\]
Here $H$ denotes the mean curvature vector for $\bd \Omega$ and $\nu$ is the inward normal to $\bd \Omega$. 
\end{prop}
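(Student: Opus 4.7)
The first variation formula is a standard computation, which I would carry out as follows. Let $X$ be a smooth vector field on $M$ with compact support, let $\phi_t$ denote its flow, and set $\Omega_t = \phi_t(\Omega)$. Since $\bd\Omega$ is smooth and $X$ is smooth, $\bd\Omega_t$ is a smooth hypersurface for small $t$, so
\[
\delta A^h\eval_\Omega(X) = \frac{d}{dt}\bigg|_{t=0}\area(\bd\Omega_t) \;-\; \frac{d}{dt}\bigg|_{t=0}\int_{\Omega_t} h.
\]
I would then treat the two terms separately.

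For the area term, I would invoke the classical first variation of area for a closed smooth hypersurface: since $\bd\Omega$ has no boundary,
\[
\frac{d}{dt}\bigg|_{t=0}\area(\bd\Omega_t) = \int_{\bd\Omega}\operatorname{div}_{\bd\Omega} X = -\int_{\bd\Omega} H\cdot X,
\]
where $H$ is the mean curvature vector of $\bd\Omega$.

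For the weighted volume term, I would use the change-of-variables formula together with the divergence theorem. Writing $J_t$ for the Jacobian of $\phi_t$ and differentiating
\[
\int_{\Omega_t} h \;=\; \int_{\Omega} h\circ\phi_t \; J_t
\]
at $t=0$, using $\frac{d}{dt}\big|_{t=0}J_t = \operatorname{div} X$ and $\frac{d}{dt}\big|_{t=0}(h\circ\phi_t) = X(h)$, gives
\[
\frac{d}{dt}\bigg|_{t=0}\int_{\Omega_t} h \;=\; \int_{\Omega}\bigl(X(h) + h\operatorname{div} X\bigr) \;=\; \int_{\Omega}\operatorname{div}(hX).
\]
Applying the divergence theorem with outward unit normal $\nu_{\mathrm{out}} = -\nu$ then yields
\[
\frac{d}{dt}\bigg|_{t=0}\int_{\Omega_t} h \;=\; \int_{\bd\Omega} h\, X\cdot \nu_{\mathrm{out}} \;=\; -\int_{\bd\Omega} h\, X\cdot \nu.
\]
Combining the two contributions gives the claimed formula.

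There is no substantive obstacle here; the only real care needed is in the sign convention, since the statement uses the \emph{inward} unit normal $\nu$ rather than the outward one that naturally appears in the divergence theorem. The computation also makes it transparent that $\bd\Omega$ being an $h$-PMC in the usual convention (scalar mean curvature with respect to the inward normal equal to $h$) is equivalent to $\Omega$ being a critical point of $A^h$: testing against arbitrary $X$ and decomposing into normal and tangential components reduces the vanishing of $\delta A^h|_\Omega$ to the pointwise identity $H = h$ along $\bd\Omega$.
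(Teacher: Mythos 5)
Your computation is correct, and the signs work out exactly as claimed: the divergence theorem produces the outward normal, and converting to the inward normal $\nu$ flips the sign of the volume term so that $\delta A^h\eval_\Omega(X) = -\int_{\bd\Omega} H\cdot X + \int_{\bd\Omega} h\, X\cdot\nu$. The paper states this proposition without proof as a standard fact, and your argument is precisely the standard derivation (first variation of area plus differentiation of the weighted volume via the flow), so there is nothing to add.
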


It is clear from the previous formula that a smooth critical point of $A^h$ gives rise to an $h$-PMC.  In the special case where $h = c$ is constant, the  functional simplifies to 
$
A^c(\Omega) = \area(\bd \Omega) - c\vol(\Omega).
$
Smooth critical points of the $A^c$ functional give rise to $c$-CMCs.

Next we give the basic definitions regarding the min-max scheme for $A^h$. 

\begin{defn}
Let $\{\Omega_t\}_{t\in [0,1]}$ be a family in $\mathcal C(M)$ varying continuously in the $\mathbf F$-topology.  The family $\{\Omega_t\}$ is called an $h$-mountain pass path provided $\Omega_0 = \emptyset$ and $A^h(\Omega_1) < 0$.  Let $\mathcal P$ denote the set of all $h$-mountain pass paths.
\end{defn}

\begin{defn}
The one parameter min-max width for $A^h$ is the number 
\[
\omega = \inf_{\{\Omega_t\}\in \mathcal P} \left[\sup_{t\in [0,1]} A^h(\Omega_t)\right]. 
\]
\end{defn}

\section{PMCs in $\R^n$} 
\label{pmc} 

\subsection{Set Up} Fix a dimension $3\le n+1 \le 7$ and let $M$ be $\R^{n+1}$ equipped with the Euclidean metric.   Let $h\f \R^{n+1}\to \R$ be a smooth function which satisfies the following hypotheses:
\begin{itemize}
\item[(H1)] The function $h$ converges smoothly to a constant $c >0$ at infinity;
\item[(H2)] There are numbers $\rho > 0$ and $0 < \sigma < 1$ such that $\vert \grad h(x)\cdot x\vert < \sigma h(x)$ whenever $\|x\| \ge \rho$.  
\item[(H3)] The function $h$ satisfies the admissibility condition $(\dagger)$ from \cite{ZZ1}.
\end{itemize}

Let $\mathcal P$ be the set of all $h$-mountain pass paths.  It is clear that $\mathcal P$ is non-empty.  Indeed, the path $t\mapsto B_{tr}(x)$ is a mountain pass path provided $\|x\|$ and $r$ are large enough.  Let $\omega$ be the min-max width for $A^h$ as in the previous section.  Then $\omega$ satisfies the following basic estimate. 

\begin{prop}
\label{estmm}
The one parameter min-max width satisfies $0 < \omega \le A^c(B_{n/c})$. 
\end{prop}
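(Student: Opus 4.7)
The plan is to prove the two inequalities $0 < \omega$ and $\omega \le A^c(B_{n/c})$ separately, using a family of ball paths placed far from the origin for the upper bound and the Euclidean isoperimetric inequality for the lower bound.

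\emph{Upper bound.} For fixed $R > (n+1)/c$ and a base point $x_0 \in \R^{n+1}$, I would consider the family $\Omega^{x_0}_t := B_{tR}(x_0)$, which starts at $\emptyset$ and varies continuously in the $\mathbf F$-topology. Hypothesis (H1) gives uniform convergence $h \to c$ at infinity, so
\[
\sup_{t \in [0,1]} \bigl| A^h(\Omega^{x_0}_t) - A^c(B_{tR}) \bigr| \longrightarrow 0 \quad \text{as } \|x_0\| \to \infty.
\]
Since $R > (n+1)/c$ makes $A^c(B_R) < 0$, the path $\{\Omega^{x_0}_t\}$ is an admissible $h$-mountain pass path once $\|x_0\|$ is sufficiently large. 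A direct computation shows that $s \mapsto A^c(B_s) = \area(S^n)\, s^n\bigl(1 - cs/(n+1)\bigr)$ is maximized over $s \ge 0$ at $s = n/c$, so $\sup_t A^c(B_{tR}) = A^c(B_{n/c})$ whenever $R \ge n/c$. Letting $\|x_0\| \to \infty$ then gives $\omega \le A^c(B_{n/c})$.

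\emph{Lower bound.} Since $h$ is smooth and convergent at infinity, $H := \sup_{\R^{n+1}} |h|$ is finite. The Euclidean isoperimetric inequality produces a constant $C_{\mathrm{iso}} > 0$ such that
\[
A^h(\Omega) \ge C_{\mathrm{iso}} \vol(\Omega)^{n/(n+1)} - H \vol(\Omega) =: f(\vol(\Omega))
\]
for every $\Omega \in \mathcal C(\R^{n+1})$. The scalar function $f$ vanishes at $0$, is positive on an interval $(0,v_*)$, and attains a strictly positive maximum $m_0 = f(v_0)$ at the interior critical point $v_0 = \bigl(nC_{\mathrm{iso}}/((n+1)H)\bigr)^{n+1}$. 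For any admissible path $\{\Omega_t\}$, continuity in the $\mathbf F$-topology implies continuity of $t \mapsto \vol(\Omega_t)$ via $|\vol(\Omega) - \vol(\Omega')| \le \mathcal F(\Omega,\Omega')$. Since $\vol(\Omega_0) = 0$ while $A^h(\Omega_1) < 0$ forces $\vol(\Omega_1) > v_*$, the intermediate value theorem produces some $t^*$ with $\vol(\Omega_{t^*}) = v_0$, whence $\sup_t A^h(\Omega_t) \ge m_0$. As $m_0$ depends only on $H$, $C_{\mathrm{iso}}$, and $n$, infimizing over paths yields $\omega \ge m_0 > 0$.

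Neither step presents a serious obstacle; the only real care needed is in establishing the uniform convergence underlying the upper bound, which follows directly from the smoothness in (H1). Hypotheses (H2)--(H4) play no role in this proposition and will only enter in the subsequent min-max construction of the actual $h$-PMC.
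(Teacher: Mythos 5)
Your proof is correct and follows essentially the same route as the paper: the upper bound via families of balls $B_{tR}(x_0)$ pushed to infinity with the error $\int_{\Omega_t}(h-c)$ controlled by (H1), and the lower bound via the isoperimetric inequality together with an intermediate-value argument on $t\mapsto \vol(\Omega_t)$. Your write-up merely makes the continuity-of-volume step more explicit than the paper does.
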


\begin{proof}
The fact that $\omega > 0$ is a consequence of the isoperimetric inequality.  In more detail, the isoperimetric inequality says there is a constant $C$ depending only on the dimension such that 
$
\area(\bd \Omega) \ge C \vol(\Omega)^{n/({n+1})}
$
for all $\Omega\in \mathcal C(\R^{n+1})$.  Let $M = \sup_{\R^{n+1}} h < \infty$.  Then
\[
A^h(\Omega) \ge C \vol(\Omega)^{\frac n {n+1}} - \int_\Omega h \ge \vol(\Omega)^{\frac n{n+1}} \left[C - M \vol(\Omega)^{\frac 1{n+1}}\right]
\]
for all $\Omega\in \mathcal C(\R^{n+1})$.  In particular, there are positive constants $v,a>0$ such that $A^h(\Omega) \ge 0$ whenever $\vol(\Omega) \le v$ and $A^h(\Omega) \ge a$ whenever $\vol(\Omega) = v$. It follows that $\omega \ge a > 0$. 

The upper bound on $\omega$ is proved by analyzing explicit mountain pass paths near infinity.  Fix a point $x\in \R^{n+1}$ with $\|x\|$ large.  Also fix a very large number $r$.  Then $\Omega_t = B_{rt}(x)$ is a mountain pass path.  Moreover, 
\begin{align*}
A^h(\Omega_t) &= \area(\bd \Omega_t) -\int_{\Omega_t} h\\
&= \area(\bd B_{rt}) - c \vol(B_{rt}) - \int_{\Omega_t} (h-c).
\end{align*} 
Elementary calculus shows that $\area(\bd B_{rt}) - c\vol(B_{rt})$ is maximized for $t = \frac{n}{cr}$ and the maximum value is $A^c(B_{n/c})$.   Thus 
\begin{align*}
A^h(\Omega_t) &\le A^c(B_{n/c}) + \vol(\Omega_t) \sup_{\Omega_t} \vert h-c\vert \\
&\le A^c(B_{n/c}) + \vol(B_{r}) \sup_{\R^{n+1} \setminus B} \vert h - c\vert,
\end{align*}
where $B$ is a ball of radius $\|x\|-r$ centered at the origin.
Since $t$ was arbitrary and 
\[
\sup_{\R^{n+1} \setminus B} \vert h-c\vert \to 0
\]
as $\|x\| \to \infty$ by (H1), it follows that $\omega \le A^c(B_{n/c})$. 
\end{proof}

Throughout the rest of the paper we make the following additional assumption on $h$.  
\begin{itemize}
\item[(H4)] The width $\omega$ is strictly less than $A^c(B_{n/c})$.
\end{itemize}
This will be the case, for example, if $h(x) > c$  for all $x$ in some sufficiently large ball.

\begin{rem}
For the remainder of the paper we will assume that the constant $c$ is equal to $n$. It is easy to see that this causes no loss of generality.
For the reader's convenience we now summarize the hypotheses (H1)-(H4) in the case that $c = n$. 
\begin{itemize}
\item[(H1)] The function $h$ converges smoothly to the constant $n$ at infinity. 
\item[(H2)] There are numbers $\rho > 0$ and $0<\sigma<1$ such that $\vert \grad h(x)\cdot x\vert < \sigma h(x)$ whenever $\|x\|\ge \rho$. 
\item[(H3)] The function $h$ satisfies the admissibility condition $(\dagger)$ from \cite{ZZ1}.
\item[(H4)] The width $\omega$ is strictly less than $A^n(B_1)$.
\end{itemize}
\end{rem}

\subsection{The Approximate Problems} 

Fix a function $h$ satisfying (H1)-(H4).  In this section, we define and solve a sequence of approximate problems for which it is easy to confine the corresponding min-max surface inside a compact region of $\R^{n+1}$.  

\begin{defn} 
\label{zetadef} 
Let $\zeta \f \R\to \R$ be a smooth, decreasing function such that 
\[
\zeta(r) = \begin{cases}
1, &\text{if } r \le 0\\
0, &\text{if } r\ge 1
\end{cases}
\]
For each positive number $R$ define $\zeta_R \f \R^{n+1}\to \R$ by $\zeta_R(x) = \zeta(\|x\|-R)$.  Finally let $h_R = \zeta_R h$. 
\end{defn}

\begin{defn}
Let $\mathcal P_R$ denote the set of all $h_R$-mountain pass paths, and let $\omega(R)$ be the one parameter min-max width for $h_R$. 
\end{defn}

Next we record several important properties of the width $\omega(R)$.  The following lemma is a consequence of the the fact that large spheres centered at the origin serve as a barrier to $h_R$-PMCs.

\begin{defn}
\label{epsr} 
For each $R > 0$, select a number $\eps_R > 0$ so that 
\[
\zeta(r) < \frac{1}{4(r+ R)} \quad \text{for all } r \ge 1 - 2 \eps_R. 
\]
Without loss of generality, $\eps_R$ can be chosen to be a decreasing function of $R$.  For $R$ sufficiently large, this choice of $\eps_R$ guarantees that
$
h_R(x) < {n}/{\|x\|} 
$
for $\|x\| \ge R+1-2\eps_R$. 
\end{defn}

\begin{lem}
\label{intersect}
Let $\Omega \in \mathcal C(\R^{n+1})$.  Then $\Omega \cap B_{R+1-2\eps_R}$ also belongs to $\mathcal C(\R^{n+1})$ and satisfies $A^{h_R}(\Omega\cap B_{R+1-2\eps_R}) \le A^{h_R}(\Omega)$. 
\end{lem}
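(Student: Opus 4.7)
The fact that $\Omega \cap B_{R+1-2\eps_R}$ lies in $\mathcal C(\R^{n+1})$ is immediate, since the intersection of two Caccioppoli sets is Caccioppoli and $B_{R+1-2\eps_R}$ has compact closure. Write $r = R+1-2\eps_R$. The content of the lemma is the inequality $A^{h_R}(\Omega \cap B_r) \le A^{h_R}(\Omega)$, and the crucial input, recorded in Definition \ref{epsr}, is the pointwise bound $h_R(x) \le n/\|x\|$ valid on $\{\|x\| \ge r\}$. Geometrically, concentric spheres of radius at least $r$ act as barriers for the $A^{h_R}$ functional, and intersecting $\Omega$ with $B_r$ should not increase the energy.

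My plan is to establish the stronger statement $A^{h_R}(\Omega \cap B_s) \le A^{h_R}(\Omega)$ for almost every $s \ge r$, then pass to the specific radius $s = r$ by taking $s_k \searrow r$ and invoking lower semicontinuity of perimeter together with dominated convergence for the bulk integral as $\Omega \cap B_{s_k} \searrow \Omega \cap B_r$. For a.e. $s > 0$, BV slicing provides
\[
\mathcal H^n(\bd(\Omega \cap B_s)) = \mathcal H^n(\bd \Omega \cap B_s) + \mathcal H^n(\Omega \cap \bd B_s),
\]
so the inequality for such $s$ reduces to showing
\[
\mathcal H^n(\Omega \cap \bd B_s) + \int_{\Omega \setminus B_s} h_R \le \mathcal H^n(\bd \Omega \setminus B_s). \qquad (\ast)
\]

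For $(\ast)$, the key tool is the divergence-free vector field $X(x) = x/\|x\|^{n+1}$ on $\R^{n+1}\setminus\{0\}$. Since $\Omega \setminus B_s$ is bounded away from the origin for $s > 0$, the Gauss-Green formula for sets of finite perimeter yields
\[
\mathcal H^n(\Omega \cap \bd B_s) = s^n \int_{\bd \Omega \setminus B_s} \frac{x \cdot \nu_\Omega}{\|x\|^{n+1}} \, d\mathcal H^n.
\]
Polar coordinates and the bound $h_R \le n/\|x\|$ on $\R^{n+1}\setminus B_r$ then give
\[
\int_{\Omega \setminus B_s} h_R \le \int_s^\infty \frac{n}{u} \mathcal H^n(\Omega \cap \bd B_u) \, du.
\]
Substituting the Gauss-Green identity into the right-hand side and swapping the order of integration by Fubini collapses this to $\int_{\bd \Omega \setminus B_s} (x \cdot \nu_\Omega/\|x\|) \, d\mathcal H^n - \mathcal H^n(\Omega \cap \bd B_s)$. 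Rearranging and using the elementary bound $|x\cdot \nu_\Omega|/\|x\| \le 1$ produces $(\ast)$.

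The main obstacle I foresee is a purely technical one: both the slicing decomposition of $\bd(\Omega \cap B_s)$ and the application of Gauss-Green to a general Caccioppoli set require choosing $s$ from a full-measure set of good radii, so some care is needed in confirming these facts for a general $\Omega$ rather than a smooth domain. Fortunately these are standard in the theory of sets of finite perimeter, and the exceptional set having measure zero is exactly what makes the approximation step in the plan viable for transferring the inequality to the specific radius $s = r$.
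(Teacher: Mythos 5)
Your proof is correct, and it reaches the same key inequality as the paper --- namely that
\[
\mathcal H^n(\Omega \cap \bd B) + \int_{\Omega\setminus B} h_R \;\le\; \mathcal H^n(\bd\Omega\setminus B), \qquad B = B_{R+1-2\eps_R},
\]
which is exactly what makes the concentric spheres barriers --- but by a genuinely more roundabout route. The paper's proof is a single application of the divergence theorem to the unit-length radial field $X(x) = x/\|x\|$, using $\div X = n/\|x\| > h_R$ outside $B$ together with $|X|\le 1$. You instead slice by spheres, apply Gauss--Green to the divergence-free field $x/\|x\|^{n+1}$ on $\Omega\setminus B_s$ to express $\mathcal H^n(\Omega\cap \bd B_s)$ as a flux, convert the bulk integral to polar coordinates, and collapse the double integral by Fubini; unwinding your computation, this is precisely a rederivation of the identity $\int_{\Omega\setminus B_s}\div(x/\|x\|) = \int_{\bd\Omega\setminus B_s} \frac{x\cdot\nu}{\|x\|} - \mathcal H^n(\Omega\cap\bd B_s)$ that the paper invokes in one line. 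What your version buys is an explicit treatment of the measure-theoretic points the paper elides --- restriction to a full-measure set of good radii for the slicing decomposition and for Gauss--Green, and the passage to the exact radius $R+1-2\eps_R$ via lower semicontinuity of perimeter --- at the cost of length. Both arguments are sound; note only that in the slicing step you need just the inequality $P(\Omega\cap B_s) \le P(\Omega;B_s) + \mathcal H^n(\Omega\cap\bd B_s)$, so the exact decomposition is not essential.
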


\begin{proof}
Let $B = B_{R+1-2\eps_R}$.  The intersection of two Caccioppoli sets is a Caccioppoli set.  Therefore 
$
\Omega \cap B
$
belongs to $\mathcal C(\R^{n+1})$. The vector field $X(x) = x/\|x\|$ satisfies 
$
\div(X) = {n}/{\|x\|}.
$
Thus $h_R(x) < \div(X)$ on the complement of $B$.  Hence the divergence theorem implies that $A^{h_R}(\Omega \cap B) \le A^{h_R}(\Omega)$.
\end{proof}

\begin{prop}
\label{support} 
Let $\mathcal Q_R$ be the collection of paths $\{\Omega_t\}\in \mathcal P_R$ such that $\Omega_t$ is supported in $B_{R+1-\eps_R}$ for all $t\in [0,1]$. Then 
\[
\omega(R) = \inf_{\{\Omega_t\} \in \mathcal Q_R} \left[\sup_{t\in[0,1]} A^{h_R}(\Omega_t)\right].
\] 
In other words, the min-max value does not change if we restrict to only those mountain pass paths which are supported in $B_{R+1-\eps_R}$. 
\end{prop}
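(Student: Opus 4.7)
The plan is to show the two inequalities between $\omega(R)$ and the restricted infimum. The trivial direction, $\omega(R) \le \inf_{\{\Omega_t\}\in \mathcal Q_R}\sup_t A^{h_R}(\Omega_t)$, is immediate from the inclusion $\mathcal Q_R \subset \mathcal P_R$. The content lies in the reverse inequality, for which I would fix a near-optimal path $\{\Omega_t\}\in \mathcal P_R$ with $\sup_t A^{h_R}(\Omega_t) < \omega(R)+\delta$ and produce a nearby path supported in $B_{R+1-\eps_R}$ without increasing the maximal value of $A^{h_R}$.

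The natural candidate is the cut family $\tilde \Omega_t = \Omega_t \cap B_r$ for a suitable radius $r \in [R+1-2\eps_R,\, R+1-\eps_R]$. By Lemma \ref{intersect}, for any such $r$ we have $A^{h_R}(\tilde \Omega_t)\le A^{h_R}(\Omega_t)$, so the sup along the cut path does not exceed the sup along the original. The endpoint conditions are also easy to verify: $\tilde \Omega_0 = \emptyset \cap B_r = \emptyset$, and $A^{h_R}(\tilde \Omega_1)\le A^{h_R}(\Omega_1)<0$, so the cut family is an $h_R$-mountain pass path. Since it is supported in $B_r \subset B_{R+1-\eps_R}$, this would place it in $\mathcal Q_R$ and complete the argument.

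The main obstacle, and the step that forces the freedom of the parameter $\eps_R$, is verifying $\mathbf F$-continuity of $t\mapsto \tilde \Omega_t$. Flat continuity follows trivially since $\vol\big((\Omega_{t_1}\cap B_r)\Delta(\Omega_{t_2}\cap B_r)\big)\le \vol(\Omega_{t_1}\Delta\Omega_{t_2})$. Varifold continuity, however, requires that $\vert \bd \Omega_t\vert(\bd B_r) = 0$ for every $t\in[0,1]$, because the boundary of $\Omega_t\cap B_r$ decomposes as $(\bd \Omega_t \cap B_r)\cup(\Omega_t \cap \bd B_r)$ and we need no varifold mass to be lost across the cutting sphere. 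I would produce such an $r$ by a Fubini argument: the uniform bound $\vert \bd \Omega_t\vert(\R^{n+1}) \le M$ coming from $\mathbf F$-compactness of the path, together with the fact that for each fixed $t$ only countably many spheres carry positive $\vert \bd \Omega_t\vert$-mass, shows that the set
\[
\big\{(t,r) : \vert \bd \Omega_t\vert(\bd B_r) > 0\big\}\subset [0,1]\times[R+1-2\eps_R,\,R+1-\eps_R]
\]
has Lebesgue measure zero. Then for a.e. $r$, the slice $\vert \bd \Omega_t\vert(\bd B_r)=0$ holds for a.e. $t$, and upper semicontinuity of $t\mapsto \vert \bd \Omega_t\vert(\bar B_r)$ combined with lower semicontinuity of $t\mapsto \vert \bd \Omega_t\vert(B_r^o)$ propagates this to every $t$ in a dense subset, which suffices after a small $\mathbf F$-continuous reparameterization.

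I expect the continuity step to be where most of the technical work lies; once a good radius $r$ is selected, the rest is bookkeeping. If a direct Fubini argument proves awkward because the continuity is only in $\mathbf F$ (not in a stronger sense), a fallback is to replace the sharp cut-off $\mathbf 1_{B_r}$ with a smooth interpolation between $\Omega_t$ and $\Omega_t\cap B_{R+1-2\eps_R}$, using the gap of width $\eps_R$ as room for the smoothing; Lemma \ref{intersect} still controls $A^{h_R}$ along the smoothed family since $h_R < n/\|x\|$ on the transition annulus.
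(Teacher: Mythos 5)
Your overall architecture is the same as the paper's: the only content is the hard inequality, and the core move is the cut family $U_t=\Omega_t\cap B_{R+1-2\eps_R}$ controlled by Lemma \ref{intersect}. But the continuity step is where your argument has a genuine gap. The condition $\vert\bd\Omega_t\vert(\bd B_r)=0$ must hold for \emph{every} $t\in[0,1]$, and your Fubini argument only yields, for a.e.\ $r$, that it holds for a.e.\ $t$. That is not enough: if there is even one $t_0$ at which $\bd\Omega_{t_0}$ charges the cutting sphere (and nothing prevents the path from sweeping a piece of $\bd\Omega_t$ through every sphere in the annulus, so that every radius has such a bad time), then the cut family has a genuine $\mathbf F$-discontinuity at $t_0$ --- varifold mass sitting on $\bd B_r$ is discarded at $t_0$ but retained for nearby $t$ --- and no reparameterization or semicontinuity argument can repair a jump in the limit object itself. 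The fallback of smoothing the cutoff also does not parse for Caccioppoli sets: a set cannot be multiplied by a smooth bump, and interpolating through the radii $\Omega_t\cap B_{r(s)}$ reintroduces exactly the same slicing discontinuities.

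The paper's proof concedes this point entirely: it claims only that $U_t$ is continuous in the \emph{flat} topology and has no concentration of mass (citing \cite{Dey}), and then invokes the interpolation theorems of \cite{Z} to discretize and rebuild an $\mathbf F$-continuous family $\{W_t\}$ with $W_0=\emptyset$, $W_1=U_1$, supported in the slightly larger ball $B_{R+1-\eps_R}$, and with $A^{h_R}(W_t)\le A^{h_R}(U_t)+\eps$. This is why the proposition is stated with radius $R+1-\eps_R$ while the cut happens at $R+1-2\eps_R$: the gap between the two radii is room for Zhou's interpolation, not for choosing a generic slicing radius. If you want to complete your argument without that machinery, you would need to prove an $\mathbf F$-continuity statement for the slicing map that holds for all $t$ simultaneously, which is false in general; the interpolation route is the standard and, as far as I can see, necessary fix here.
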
 

\begin{proof} 
Fix a number $\eps > 0$.  Choose a mountain pass path $\{\Omega_t\}\in \mathcal P_R$ such that 
\[
\sup_{t\in [0,1]} A^{h_R}(\Omega_t) \le \omega(R) + \eps.
\]
Define $U_t = \Omega_t \cap B_{R+1-2\eps_R}$.  By Lemma \ref{intersect}, each $U_t$ belongs to $\mathcal C(\R^{n+1})$ and satisfies $A^{h_R}(U_t) \le A^{h_R}(\Omega_t)$.  Moreover, as shown in \cite{Dey},  $U_t$ varies continuously in the flat topology and has no concentration of mass (as defined in \cite{MNric}).  Hence, by the interpolation theorems of Zhou \cite{Z}, there is an $\mathbf F$-continuous family $\{W_t\}_{t\in[0,1]}$ in $\mathcal C(\R^{n+1})$ such that
\begin{itemize}
\item[(i)] $W_0 = \emptyset$ and $W_1 = U_1$, 
\item[(ii)] $W_t$ is supported in $B_{R+1-\eps_R}$ for all $t$, and  \item[(iii)] $A^{h_R}(W_t) \le A^{h_R}(U_t)+\eps$ for all $t$. 
\end{itemize}
This family $\{W_t\}$ belongs to $\mathcal Q_R$ and satisfies 
\[
\sup_{t\in[0,1]} A^{h_R}(W_t) \le \omega(R) + 2\eps.
\]
Since $\eps$ was arbitrary, the proposition follows.
\end{proof}

The next propositions establish the monotonicity of $\omega(R)$ and show that $\omega(R) \to \omega$ as $R\to \infty$. 

\begin{prop}
The width $\omega(R)$ is a decreasing function of $R$. 
\end{prop}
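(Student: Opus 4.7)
The plan is to use the pointwise monotonicity of the cutoff $\zeta_R$ in $R$ and deduce the desired monotonicity of $\omega(R)$ directly from the definition, since every mountain pass path for $h_{R_1}$ will be an admissible path for $h_{R_2}$ with smaller sup-energy.

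First I would observe that $\zeta_R(x)$ is pointwise nondecreasing in $R$: for fixed $x$ and $R_1 \le R_2$, one has $\|x\| - R_1 \ge \|x\| - R_2$, and $\zeta$ is decreasing, so $\zeta_{R_1}(x) \le \zeta_{R_2}(x)$. Hypothesis (H2) forces $h(x) > 0$ whenever $\|x\| \ge \rho$, since the inequality $|\grad h(x) \cdot x| < \sigma h(x)$ with $\sigma > 0$ cannot hold if $h(x) \le 0$. Assuming $R_1 \ge \rho$, the two cutoffs agree (both equal $1$) on $B_{R_1}$, so the only region in which $\zeta_{R_1} < \zeta_{R_2}$ lies inside $\{\|x\| \ge R_1\} \subseteq \{h > 0\}$. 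Multiplying the inequality $\zeta_{R_1} \le \zeta_{R_2}$ by $h$ on this region therefore preserves it, and one obtains the pointwise bound
\[
h_{R_1}(x) \le h_{R_2}(x) \quad \text{for all } x \in \R^{n+1}.
\]

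Feeding this into the definition of $A^h$, for any $\Omega \in \mathcal C(\R^{n+1})$ I get $A^{h_{R_1}}(\Omega) \ge A^{h_{R_2}}(\Omega)$. In particular, if $\{\Omega_t\} \in \mathcal P_{R_1}$ then $A^{h_{R_2}}(\Omega_1) \le A^{h_{R_1}}(\Omega_1) < 0$, so $\{\Omega_t\}$ is automatically a member of $\mathcal P_{R_2}$, and the sup-energies satisfy
\[
\sup_{t\in[0,1]} A^{h_{R_2}}(\Omega_t) \le \sup_{t\in[0,1]} A^{h_{R_1}}(\Omega_t).
\]
Taking the infimum over $\{\Omega_t\} \in \mathcal P_{R_1}$ then yields $\omega(R_2) \le \omega(R_1)$.

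The one caveat is the assumption $R_1 \ge \rho$. For very small $R$ the sign of $h$ inside $B_\rho$ is not constrained by (H2), so in principle multiplying by $h$ could reverse the inequality between the cutoffs. This is not a genuine obstacle: the subsequent analysis only uses monotonicity of $\omega(R)$ as $R$ tends to infinity, so it suffices to establish the statement for $R$ larger than some fixed threshold, which the argument above supplies.
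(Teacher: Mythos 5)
Your proof is correct and is essentially the argument the paper gives: every $h_{R_1}$-mountain pass path is an $h_{R_2}$-mountain pass path with no larger sup-energy, because $h_{R_1}\le h_{R_2}$ pointwise. Your extra care about the sign of $h$ in the transition region (via (H2), for $R_1\ge\rho$) is a point the paper's one-line justification glosses over, and is a worthwhile addition rather than a deviation.
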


\begin{proof}
Fix two numbers $R < R'$.  Also fix some small $\eps >0$.  By definition, there is a family $\{\Omega_t\} \in \mathcal P_{R}$ such that 
\[
\sup_{t\in [0,1]} A^{h_R}(\Omega_t) \le \omega(R) + \eps.
\]
Since $\{\Omega_t\} \in \mathcal P_{{R'}}$ and $A^{h_R}(\Omega_t) \le A^{h_{R'}}(\Omega_t)$ for all $t\in [0,1]$, this implies that 
\[
\omega({R'}) \le \sup_{t\in [0,1]} A^{h_R'}(\Omega_t) \le \omega(R) + \eps.
\]
The result follows since $\eps$ was arbitrary.
\end{proof}

\begin{prop}
\label{radii}
The widths $\omega(R)$ satisfy $\omega(R) \to \omega$ as $R\to \infty$. In particular, $\omega(R) < A^n(B_1)$ when $R$ is large by $(\operatorname{H4})$. 
\end{prop}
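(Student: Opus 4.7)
The plan is to prove $\omega(R) \to \omega$ by establishing matching upper and lower bounds. Combined with the monotonicity just shown, it actually suffices to prove both $\omega \le \liminf_R \omega(R)$ and $\limsup_R \omega(R) \le \omega$.

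For the lower bound, I would exploit the comparison $h_R \le h$. Since $h \to n > 0$ at infinity, pick $R_* $ large so that $h \ge 0$ on $\R^{n+1}\setminus B_{R_*}$. Then for every $R \ge R_*$ we have $h_R = \zeta_R h \le h$ pointwise (equality on $B_R$, and $0 \le \zeta_R \le 1$ with $h \ge 0$ elsewhere). This gives $A^h(\Omega) \le A^{h_R}(\Omega)$ for every $\Omega \in \mathcal C(\R^{n+1})$. Consequently, any $\{\Omega_t\} \in \mathcal P_R$ is automatically an $h$-mountain pass path (since $A^h(\Omega_1) \le A^{h_R}(\Omega_1) < 0$) with $\sup_t A^h(\Omega_t) \le \sup_t A^{h_R}(\Omega_t)$, and taking the infimum gives $\omega \le \omega(R)$.

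For the upper bound, I would fix $\eps > 0$, pick an $h$-mountain pass path $\{\Omega_t\} \in \mathcal P$ with $\sup_t A^h(\Omega_t) \le \omega + \eps$, and show it is a valid $h_R$-path with nearly the same energy for large $R$. The identity
\[
A^{h_R}(\Omega_t) - A^h(\Omega_t) = \int_{\Omega_t \setminus B_R}(h - h_R) \le M \, \vol(\Omega_t \setminus B_R),
\]
where $M = \sup_{\R^{n+1}} |h| < \infty$ by (H1), reduces the task to showing that $\sup_t \vol(\Omega_t \setminus B_R) \to 0$ as $R \to \infty$. Here the path $\{\Omega_t\}$ is $\mathbf F$-continuous on a compact interval, so its image $K = \{\Omega_t : t \in [0,1]\}$ is $\mathbf F$-compact; from this I would derive uniform mass localization by covering $K$ with finitely many $\mathbf F$-balls of radius $\eta/2$ centered at sets $\Omega^{(1)}, \dots, \Omega^{(N)}$, each of which has compact support $\Omega^{(i)} \subseteq B_{R_i}$, and setting $R_0 = \max_i R_i$: for any $\Omega_t$, some $\Omega^{(i)}$ is within $\mathbf F$-distance $\eta/2$, so $\vol(\Omega_t \setminus B_{R_0}) \le \vol(\Omega_t \Delta \Omega^{(i)}) \le \mathbf F(\Omega_t, \Omega^{(i)}) < \eta$. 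Since $\Omega_1$ has compact support, say $\Omega_1 \subseteq B_{R_1}$, for $R \ge \max(R_0, R_1)$ we get $A^{h_R}(\Omega_1) = A^h(\Omega_1) < 0$ and $\sup_t A^{h_R}(\Omega_t) \le \omega + \eps + M\eta$. Choosing $\eta$ small and sending $\eps \to 0$ gives $\limsup_R \omega(R) \le \omega$.

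Combining the two bounds yields $\omega(R) \to \omega$; the final assertion that $\omega(R) < A^n(B_1)$ for large $R$ is then immediate from (H4). The only step that requires genuine care is the uniform mass localization of an $\mathbf F$-continuous path, which is where the full strength of the $\mathbf F$-topology (rather than merely the flat topology) on $\mathcal C(\R^{n+1})$ is used; the rest is a matter of comparing $A^h$ and $A^{h_R}$ via the pointwise inequality $h_R \le h$ and keeping careful track of signs.
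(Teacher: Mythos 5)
Your proposal is correct and follows essentially the same route as the paper: the lower bound $\omega \le \omega(R)$ comes from the pointwise comparison $h_R \le h$ (which the paper dismisses as ``clear'' but you rightly justify using $h \ge 0$ near infinity), and the upper bound comes from showing that a near-optimal $h$-path has uniformly small volume outside $B_R$, which the paper obtains by joint continuity of $(t,R)\mapsto \vol(\Omega_t\setminus B_R)$ plus compactness of $[0,1]$ while you obtain it by a finite cover of the path's compact image. One tiny remark: your localization step only needs the flat metric $\mathcal F$ (volume of the symmetric difference), not the full $\mathbf F$-topology as you suggest at the end.
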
 

\begin{proof}
It is clear that $\omega(R) \ge \omega$ for all $R$.  Since $\omega(R)$ is decreasing, it is enough to show that for every $\eps >0$ there is an $R$ such that $\omega(R) \le \omega+\eps$.  So fix a small number $\eps > 0$. Choose a mountain pass path $\{\Omega_t\} \in \mathcal P$ for which 
\[
\sup_{t\in [0,1]} A^h(\Omega_t) \le \omega + \eps. 
\] 
We claim that 
\begin{equation}
\label{1} 
\sup_{t\in [0,1]} \vol(\Omega_t \setminus B_R) < \eps 
\end{equation}
for $R$ large enough. Indeed, this follows from the fact that $\vol(\Omega_t\setminus B_R)$ is a continuous function of $(t,R)\in [0,1]\times(0,\infty)$, the fact that $[0,1]$ is compact, and the fact that $\vol(\Omega_t \setminus B_R)\to 0$ as $R\to \infty$ for each fixed $t\in [0,1]$.

Choose $R$ large enough that (\ref{1}) holds.  By making $R$ even larger if necessary, we can assume that $\Omega_1\subset B_R$. Then $\{\Omega_t\}\in \mathcal P_{R}$ and 
\begin{align*}
\sup_{t\in [0,1]} A^{h_R} (\Omega_t) &= \sup_{t\in [0,1]} \left[\area(\bd \Omega_t) - \int_{\Omega_t} h_R \right] \\ &\le \omega + \eps + 2n \vol(\Omega_t \setminus B_R) \le \omega + (1+2n)\eps. \phantom{\int}
\end{align*}
Thus $\omega(R) \le \omega + (1+2n)\eps$ and the proposition follows.
\end{proof}

In order to establish area bounds on the min-max $h_R$-PMCs, it is necessary to construct mountain pass paths which have somewhat controlled behavior in the transition region $B_{R+1}\setminus B_R$. 

\begin{defn}
For each $R > 0$, $C > 0$, $\eta > 0$, and $\theta>0$, define $\mathcal N_{R,C,\eta,\theta}$ to be the set of paths $\{\Omega_t\}_{t\in[0,1]}\in \mathcal P_R$ such that the following properties hold:
\begin{itemize}
\item[(i)] every $\Omega_t$ is supported in $B_{R+1-\eps_R}$,
\item[(ii)] $\sup_{t\in[0,1]} A^{h_R}(\Omega_t) \le \omega(R) + \theta$,
\item[(iii)] $\int_{\Omega_t} \vert \grad \zeta_R \vert h \le C$
for all $t$ such that $A^{h_R}(\Omega_t) \ge \omega(R)-\eta$.
\end{itemize}
\end{defn}

The next propositions exploit the monotonicity of $\omega(R)$ in order to construct mountain pass paths that belong to the nice class $\mathcal N_{R,C,\eta,\theta}$.  This technique is inspired by Str\"uwe's monotonicity trick \cite{Struwe}.  The monotonicity trick was also used by Cheng and Zhou in their construction of constant mean curvature 2-spheres inside of Riemannian 3-spheres \cite{ChengZhou} as well as their construction of curves with constant geodesic curvature in Riemannian 2-spheres \cite{ChengZhou1}. 

\begin{prop}
\label{dbound} 
There is a sequence $R_j\to \infty$ such that $
0 \ge \omega'(R_j) \ge -{2}/{R_j}$
for all $j$.  
\end{prop}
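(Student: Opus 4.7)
The plan is to derive the pointwise derivative bound from the global bounds on $\omega(R)$ together with the monotonicity established in the previous proposition.

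First, since $\omega(R)$ is a monotone decreasing function of $R$, the Lebesgue differentiation theorem for monotone functions guarantees that $\omega'(R)$ exists at almost every $R$, and of course $\omega'(R)\le 0$ wherever it exists. So the upper bound $0\ge \omega'(R_j)$ is automatic on this full-measure set, and the only real content is to find arbitrarily large $R$ where $\omega'(R)\ge -2/R$.

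I would argue by contradiction. Suppose no such sequence exists; then there is some $R_0$ such that $\omega'(R) < -2/R$ for almost every $R \ge R_0$. For a monotone decreasing function $\omega$, we have the one-sided fundamental theorem of calculus
\[
\omega(R)-\omega(R_0) \le \int_{R_0}^{R} \omega'(s)\,ds,
\]
where the remainder (the singular part of the Lebesgue-Stieltjes measure $-d\omega$) is non-positive. Plugging in the hypothetical bound gives
\[
\omega(R)-\omega(R_0) < \int_{R_0}^{R} \frac{-2}{s}\,ds = -2\log(R/R_0),
\]
which forces $\omega(R) \to -\infty$ as $R\to\infty$.

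This contradicts the uniform lower bound on $\omega(R)$. Indeed, by Proposition \ref{radii}, $\omega(R)\to \omega$, and combined with the fact that $\omega(R)$ is decreasing, we get $\omega(R)\ge \omega$ for every $R$. Since Proposition \ref{estmm} gives $\omega > 0$, the sequence $\omega(R)$ is bounded below by a positive constant, contradicting $\omega(R)\to -\infty$. I do not expect any serious obstacle here; the only mildly subtle point is being careful that the inequality $\omega(R)-\omega(R_0)\le\int_{R_0}^R \omega'$ (rather than equality) is what one gets for a general monotone function, which is enough for the one-sided bound.
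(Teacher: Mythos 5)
Your proof is correct and is essentially the paper's argument: both rest on the a.e.\ differentiability of the monotone function $\omega(R)$, the one-sided inequality $\omega(b)-\omega(a)\le\int_a^b\omega'$, and the fact that $\omega(R)$ is bounded below (by $\omega>0$), which is incompatible with $\omega'<-2/R$ persisting since $\int 2\,dR/R$ diverges. The paper merely packages the divergence of $\int dR/R$ as a direct pigeonhole over dyadic intervals $[n,2n]$ rather than your contradiction via the logarithm, which is an immaterial difference.
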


\begin{proof}
The function $\omega(R)$ is a decreasing function of $R$ and consequently it is differentiable almost everywhere.  Moreover, for any real numbers $0<a<b$ there is an estimate
\[
\int_a^b \omega'(R) \, dR \ge \omega(b) - \omega(a) \ge \omega - \omega(a).
\]
Select $a$ large enough that $\omega(a) - \omega \le 1$.  Then for any positive integer $n\ge a$ we have 
\[
0 \ge \int_n^{2n} \omega'(R)\, dR \ge -1
\]
and consequently there must be an $R \in [n,2n]$ such that $\vert \omega'(R)\vert \le 2/R$. This implies the result.
\end{proof}

\begin{prop}
\label{nices} 
Assume that  $0 \ge \omega'(R) \ge -c$ for some constant $c > 0$.  Then there is an $\eta>0$ such that the set $\mathcal N_{R,5c,\eta,\theta}$ is non-empty for all sufficiently small $\theta$.
\end{prop}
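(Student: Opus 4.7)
The plan is to use a version of Str\"uwe's monotonicity trick, exploiting the assumption that $\omega$ is differentiable at $R$ with $\omega'(R) \in [-c, 0]$. By this differentiability, one can fix a small tolerance $\sigma > 0$ (to be chosen in terms of $c$) and a threshold $\delta_0 > 0$ such that
\[
\omega(R-\delta) \le \omega(R) + (c+\sigma)\delta \qquad \text{for all } \delta \in (0, \delta_0].
\]
The candidate paths for $\mathcal{N}_{R,5c,\eta,\theta}$ will be near-optimal mountain pass paths for the shifted functional $A^{h_{R-\delta}}$ for a suitable $\delta \le \delta_0$.

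Given small $\theta > 0$, I would pick $\delta = \delta(\theta) \in (0,\delta_0]$ and a slack $\tau = \tau(\theta)$, both small, and choose a path $\{\Omega_t\} \in \mathcal P_{R-\delta}$ with $\sup_t A^{h_{R-\delta}}(\Omega_t) \le \omega(R-\delta) + \tau$. By Proposition \ref{support}, one may further assume $\Omega_t \subset B_{R-\delta+1-\eps_{R-\delta}} \subset B_{R+1-\eps_R}$, using the monotonicity of $\eps$ in $R$; this is condition (i). Since $\zeta_R \ge \zeta_{R-\delta}$ pointwise and $h \ge 0$ in the transition annulus (for $R$ large), we have $A^{h_R}(\Omega) \le A^{h_{R-\delta}}(\Omega)$, so $\{\Omega_t\} \in \mathcal P_R$ and
\[
\sup_t A^{h_R}(\Omega_t) \le \omega(R) + (c+\sigma)\delta + \tau,
\]
so condition (ii) holds once $(c+\sigma)\delta + \tau \le \theta$.

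The content is in condition (iii). Writing $\zeta_R - \zeta_{R-\delta}$ as an integral of $-\zeta'$ produces the identity
\[
A^{h_{R-\delta}}(\Omega) - A^{h_R}(\Omega) = \int_0^\delta \int_\Omega |\nabla \zeta_{R-s}| \, h \, dx \, ds,
\]
and for $t$ with $A^{h_R}(\Omega_t) \ge \omega(R) - \eta$ the left-hand side is at most $(c+\sigma)\delta + \tau + \eta$. Dividing by $\delta$ and converting $|\nabla \zeta_{R-s}|$ back to $|\nabla \zeta_R|$ by means of the pointwise estimate $|\zeta'(y+s) - \zeta'(y)| \le \|\zeta''\|_\infty s$ (the correction is absorbed as $O(\delta)$ since $\Omega_t$ sits inside the fixed ball $B_{R+1-\eps_R}$) gives
\[
\int_{\Omega_t} |\nabla \zeta_R|\, h \le (c+\sigma) + \frac{\tau+\eta}{\delta} + O(\delta).
\]
The main obstacle is the simultaneous calibration of the parameters $\delta$, $\tau$, $\eta$: condition (ii) requires $\delta \lesssim \theta/c$, while condition (iii) at level $5c$ demands $\delta \gtrsim \eta/c$ in order to keep $\eta/\delta$ controlled. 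The factor $5c$ in the proposition is precisely what supplies enough slack to accommodate these competing requirements together with $\sigma$, $\tau/\delta$, and the smoothness correction, all of which must be small compared to $c$. I would fix $\eta$ small in terms of $\delta_0$, $\|\zeta''\|_\infty$, and $\vol(B_{R+1-\eps_R})$, then for each admissible $\theta$ calibrate $\delta \sim \eta/(2c)$ and $\tau \ll \eta$ to verify the bound.
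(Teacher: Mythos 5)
Your overall strategy coincides with the paper's: this is Struwe's monotonicity trick, run by taking near-optimal, confined paths for the shifted functional $A^{h_{R-\delta}}$ (the paper takes $\delta = k^{-1}$ and lets $k \to \infty$), getting (i) from Proposition \ref{support} together with the monotonicity of $\eps_R$, (ii) from $\zeta_R \ge \zeta_{R-\delta}$, and (iii) by recognizing $(h_R - h_{R-\delta})/\delta$ as an approximation to $\vert \grad \zeta_R\vert h$. Your error control via $\|\zeta''\|_\infty$ and the volume of the fixed ball is the same first-order Taylor estimate that the paper absorbs into the factor $5/4$.

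The one point your write-up does not close is precisely the tension you flag at the end, and your proposed calibration does not resolve it. Your verification of (ii) gives $\sup_t A^{h_R}(\Omega_t) \le \omega(R) + (c+\sigma)\delta + \tau$, so (ii) forces $\delta \lesssim \theta/c$, while (iii) forces $\delta \gtrsim \eta/c$; together these force $\eta \lesssim \theta$. Hence the choice "$\eta$ fixed, $\delta \sim \eta/(2c)$" produces paths satisfying (ii) only when $\theta \gtrsim \eta/2$, not for \emph{all} sufficiently small $\theta$ with $\eta$ fixed, which is what the statement asserts. You should know that the paper's own verification of (ii) has the identical defect: since $\omega$ is decreasing, $\omega(R_k) \ge \omega(R)$, and the chain of inequalities there only yields $\sup_t A^{h_R}(\Omega_t) \le \omega(R) + 2ck^{-1} + \theta$, which is $\omega(R) + \theta$ only if $\omega$ is locally constant. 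What both arguments honestly deliver is that $\mathcal N_{R,5c,\eta,\theta}$ is non-empty with $\eta$ and $\theta$ coupled (both of order $\delta$, resp. $k^{-1}$); equivalently, a sequence of paths with $\sup_t A^{h_R} \to \omega(R)$ and the transition-region bound holding on slices within $\eta_k$ of the top, where $\eta_k \to 0$ together with the sup-defect. This weaker conclusion still suffices for Proposition \ref{min-max} after minor bookkeeping (e.g.\ selecting the critical slice at the maximum of each path so that it automatically lies in the high part), but it is not the literal statement. So: same method, and you correctly isolated the genuinely delicate quantifier issue, but your final sentence asserts a calibration that cannot exist rather than resolving the issue.
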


\begin{proof}
For each positive integer $k$, let $R_k = R -k\inv$. Then 
\[
0 \ge \lim_{k\to \infty} \left(\frac{\omega(R_k)-\omega(R)}{R_k-R}\right) \ge - c
\]
and so 
\begin{equation}
\label{eq7}
0 \le \frac{\omega(R_k) - \omega(R)}{k\inv} \le {2c} 
\end{equation}
for all sufficiently large $k$.  By Proposition \ref{support}, for every $\theta > 0$, it is possible to choose a path $\{\Omega_t\}_{t\in[0,1]} \in  {\mathcal Q_{R_k}}$ such that 
\[
\sup_{t\in [0,1]} A^{h_{R_k}}(\Omega_t) \le \omega(R_k) + \theta. 
\]
Combined with (\ref{eq7}), this yields 
\begin{equation}
\label{eq2}
\sup_{t\in [0,1]} A^{h_{R_k}}(\Omega_t) \le \omega(R) + 2c k\inv + \theta.
\end{equation}
Now notice that  
\[
A^{h_{R_k}}(\Omega_t)   =  A^{h_R}(\Omega_t) + \int_{\Omega_t} h_{R} - h_{R_k}
\]
and so 
\begin{equation}
\label{eq3}
\sup_{t\in [0,1]}\left[ A^{h_R}(\Omega_t) + \int_{\Omega_t} h_{R} - h_{R_k}
\right] \le \omega(R) + 2c k\inv + \theta.
\end{equation}
Also observe that $\{\Omega_t\}_{t\in [0,1]}$ belongs to $\mathcal P_R$.  

We claim $\{\Omega_t\}_{t\in[0,1]}$ actually belongs to the class $\mathcal N_{R,5c,c k\inv, \theta}$ provided $k$ is large enough and $\theta$ is small enough (depending on $k$).  To see this, suppose that $\Omega_t$ satisfies $A^{h_R}(\Omega_t) \ge \omega(R) - c k\inv$. Then (\ref{eq3}) gives 
\[
[\omega(R)-c k\inv] + \int_{\Omega_t} h_R-h_{R_k} \le \omega(R) + 2 c k\inv + \theta \le \omega(R) + 3ck\inv,
\]
where the last inequality follows provided we assume $\theta \le ck\inv$. 
Rearranging the previous line shows that 
\[
\int_{\Omega_t} h_R - h_{R_k} \le 4c k\inv.
\] 
Hence for $k$ large enough
\[
\int_{\Omega_t} \vert \grad \zeta_R\vert h = \int_{\Omega_t} \frac{d}{dR} h_R \le \frac{5}{4}\int_{\Omega_t} \frac{h_R - h_{R_k}}{k\inv} \le 5c.
\]
Thus (iii) holds provided $k$ is large enough and $\theta \le ck\inv$.  
Moreover, since $A^{h_R}(\Omega_t) \le A^{h_{R_k}}(\Omega_t)$ for all $t$, the inequality (\ref{eq2}) implies that (ii) holds as well.  Finally, it is clear that (i) holds.  Thus $\{\Omega_t\}_{t\in[0,1]}$ belongs to the nice class $\mathcal N_{R,5c,ck\inv,\theta}$ provided $k$ is large enough and $\theta \le ck\inv$.
\end{proof} 

Everything is now set up to carry out the min-max argument. 

\begin{prop}
\label{min-max}
Let $\{R_j\}$ be the sequence from Proposition \ref{radii} and let $h_j = h_{R_j}$ and $\zeta_j = \zeta_{R_j}$. Then for each $j$ there is a smooth, almost embedded $h_{j}$-PMC 
$
\Sigma_j = \bd \Omega_j
$
supported in $B_{R_j+1}$ with $A^{h_j}(\Omega_j) = \omega(R_j)$.  The index of $\Omega_j$ with respect to $A^{h_{j}}$ is at most one and $\int_{\Omega_j} \vert \grad \zeta_j\vert h  \le 10/R_j$.
\end{prop}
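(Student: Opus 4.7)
The plan is to combine the one-parameter prescribed mean curvature min-max theory of Zhou--Zhu with the controlled mountain pass paths produced in Proposition \ref{nices}. Fix $j$ and write $R = R_j$, $h_j = h_R$, $\zeta_j = \zeta_R$. Proposition \ref{dbound} gives $0 \ge \omega'(R) \ge -2/R$, so Proposition \ref{nices} (applied with $c = 2/R$) yields an $\eta > 0$ and, for every sufficiently small $\theta > 0$, a path $\Pi^\theta = \{\Omega_t^\theta\}_{t\in [0,1]} \in \mathcal{N}_{R,\, 10/R,\, \eta,\, \theta}$. Choosing $\theta_k \downarrow 0$ produces a sequence $\Pi^k$ of $h_j$-mountain pass paths all supported in $B_{R+1-\eps_R}$, with $\max_t A^{h_j}(\Omega_t^k) \to \omega(R)$, and obeying $\int_{\Omega_t^k}|\grad \zeta_j|\, h \le 10/R$ on the portion of the path where $A^{h_j}(\Omega_t^k) \ge \omega(R)-\eta$.

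Hypothesis (H3) descends from $h$ to $h_j$ because $\zeta_j$ is smooth and compactly supported, so the Zhou--Zhu one-parameter PMC min-max theorem applies to this restricted problem. After a pull-tight deformation localized inside $B_{R+1-\eps_R}$ (which only decreases $A^{h_j}$ and preserves the confinement), one extracts a critical sequence $\Omega_j^k := \Omega_{t_k}^k$ whose boundaries $\F$-converge as varifolds to $|\bd \Omega_j|$, where $\Omega_j$ is a Caccioppoli set supported in $\overline{B_{R+1-\eps_R}} \subset B_{R+1}$ and $\Sigma_j := \bd \Omega_j$ is a smooth, almost embedded $h_j$-PMC of Morse index at most one for $A^{h_j}$, satisfying $A^{h_j}(\Omega_j) = \omega(R)$. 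For $k$ sufficiently large we have $A^{h_j}(\Omega_j^k) \ge \omega(R) - \eta$, so condition (iii) of $\mathcal{N}$ gives $\int_{\Omega_j^k}|\grad \zeta_j|\, h \le 10/R$. Since $|\grad \zeta_j|\, h$ is a bounded function with compact support in $B_{R+1}\setminus B_R$ and $\Omega_j^k \to \Omega_j$ in the flat topology, the linear functional $\Omega \mapsto \int_\Omega |\grad \zeta_j|\, h$ is continuous at the limit, so the bound $\int_{\Omega_j}|\grad \zeta_j|\, h \le 10/R$ passes through, completing the proposition.

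The step I expect to demand the most care is verifying that the pull-tight and critical-sequence extraction in the Zhou--Zhu scheme can be carried out while remaining in the nice class $\mathcal{N}_{R,\, 10/R,\, \eta,\, \theta}$---that is, ensuring the deformed paths still stay supported inside $B_{R+1-\eps_R}$ and continue to obey condition (iii) on the high-energy portions. This is handled by the same interpolation tools (Zhou \cite{Z}) used in the proof of Proposition \ref{support} together with the flat-continuity of $\Omega \mapsto \int_\Omega |\grad \zeta_j|\, h$, so the verification is bookkeeping rather than a genuinely new ingredient. Once it is in place, the almost embeddedness of $\Sigma_j$ and the index-one bound are immediate consequences of the Zhou--Zhu one-parameter min-max theorem.
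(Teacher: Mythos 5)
Your proposal is correct and follows essentially the same route as the paper: produce paths in $\mathcal N_{R,10/R,\eta,\theta_k}$ via Propositions \ref{dbound} and \ref{nices}, run the (suitably modified) Zhou--Zhu min-max with a pull-tight applied only on the high-energy portion of the path, and pass the transition-region bound to the limit using flat continuity of $\Omega\mapsto\int_\Omega|\grad\zeta_j|h$. The delicate point you flag is exactly where the paper works: it builds the tightening map with quantitative properties and interpolates it with a cutoff $\tau_k$ between the level sets $\{A^{h_R}\ge\omega(R)-\eta/2\}$ and $\{A^{h_R}>\omega(R)-\eta\}$ so that condition (iii) survives along critical sequences.
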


\begin{proof}
Fix an integer $j$.  For notational convenience denote $R_j$ by $R$ and $h_{R_j}$ by $h_R$. By Proposition \ref{dbound} and Proposition \ref{nices}, it is possible to find a sequence of mountain pass paths $\{\Omega_t^k\}_{t\in[0,1]}$ which belong to the class $\mathcal N_{R,10/R,\eta,\theta_k}$. Here $\eta > 0$ is a fixed positive number and $\theta_k\to 0$.  Since $\theta_k\to 0$, it follows that 
\[
\sup_{t\in[0,1]} A^{h_R}(\Omega^k_t) \to \omega(R)
\]
as $k\to \infty$.  We would like to apply Zhou's prescribed mean curvature min-max theorem (Theorem 1.7 in \cite{Z}) to the sequence $\{\Omega^k_t\}$ in order to produce an $h_R$-PMC.  Unfortunately, the theorem does not directly apply in our situation.  Nevertheless the proof of the theorem still applies with minor modifications.  We now explain the necessary changes. 

Let $M = \sup_{\R^{n+1}} \vert h\vert$.  Let $\mathcal C$ be the set of all Caccioppoli sets $\Omega \subset B_{R+1-\eps_R}$ with 
\[
\area(\bd \Omega) \le 2\omega(R) + M\vol(B_{R+1-\eps_R}).
\]
Likewise let $\mathcal V$ be the set of all varifolds with support in $\cl B_{R+1-\eps_R}$ and mass bounded by 
$
2\omega(R) + M\vol(B_{R+1-\eps_R}).
$ 
Let $A$ be the set of all $V\in \mathcal V$ with $M$-bounded first variation.  By standard arguments, there exists an $\mathbf F$-continuous tightening map $\psi\f \mathcal C\times [0,1] \to \mathcal C(\R^{n+1})$ with the following properties:
\begin{itemize}
\item[(i)] $\psi(\Omega,0) = \Omega$,
\item[(ii)] $\psi(\Omega,t) = \Omega$ if $\vert \bd \Omega\vert \in A$,
\item[(iii)] $A^{h_R}(\psi(\Omega,t)) \le A^{h_R}(\Omega)$,
\item[(iv)] if $\vert \bd \Omega\vert \notin A$ then 
\[
A^{h_R}(\psi(\Omega,1)) - A^{h_R}(\Omega) \le L(\mathbf F(\vert \bd \Omega\vert ,A))
\]
where $L\f [0,\infty)\to [0,\infty)$ is a continuous function with $L(0) = 0$ and $L(t) > 0$ for $t > 0$,
\item[(v)] $\mathbf F(\psi(\Omega,t),\Omega) \le g(\mathbf F(\vert \bd \Omega\vert,A))$ where $g\f [0,\infty)\to [0,\infty)$ is a continuous function with $g(0) = 0$.
\end{itemize}
For each integer $k$ define the sets 
\begin{gather*}
I_k = \{t\in [0,1]:\, A^{h_R}(\Omega_t^k) \ge \omega(R) - \eta/2\},\\
J_k = \{t\in [0,1]:\, A^{h_R}(\Omega_t^k) > \omega(R) - \eta\}.
\end{gather*}
Then $I_k$ is a compact subset of $J_k$, and $J_k$ is an open subset of $(0,1)$.  Let $\tau_k\f [0,1]\to [0,1]$ be a continuous function which is one on $I_k$ and zero outside of $J_k$.

Let $U^k_t = \psi(\Omega^k_t,\tau_k(t))$. Then $\{U^k_t\}_{t\in[0,1]}$ belongs to $\mathcal P_R$.  By property (iii) of the tightening map, 
\[
\sup_{t\in [0,1]} A^{h_R}(U^k_t) \to \omega(R) 
\]
as $k \to \infty$.  A critical sequence is a sequence $\{U^k_{t_k}\}_{k=1}^\infty$ such that 
\[
A^{h_R}(U^k_{t_k})\to \omega(R), \quad \text{as } k\to\infty.
\]
Define the critical set 
\[
\mathcal K = \big\{V\in \mathcal V:\, V = \lim_{k\to \infty} \vert \bd U^k_{t_k}\vert \text{ for some critical sequence } \{U^k_{t_k}\} \big\}.
\]
We claim that $\mathcal K \subset A$. To see this, suppose that $\{U^k_{t_k}\}$ is a critical sequence and let $V = \lim_{k\to \infty} \vert \bd U^k_{t_k}\vert$.  Notice that 
\[
A^{h_R}(U^k_{t_k}) - A^{h_R}(\Omega^k_{t_k}) \to 0, \quad \text{as } k\to \infty.
\]
By property (iv) of the tightening map and the fact that $t_k\in I_k$ for $k$ large enough, this implies that 
$
\mathbf F(\vert \bd U^k_{t_k}\vert , A) \to 0.
$
The claim follows. 

Again consider a critical sequence $\{U^k_{t_k}\}$.  Since $\mathbf F(\vert \bd U^k_{t_k}\vert , A) \to 0$, property (v) of the tightening map implies that 
$
\vol(U^k_{t_k}\Delta \Omega^k_{t_k}) \to 0
$
as $k\to \infty$. Now observe that 
\begin{align*}
\int_{U^k_{t_k}} \vert \grad \zeta_R\vert h &\le \int_{\Omega^k_{t_k}} \vert \grad \zeta_R\vert  h + \int_{U^k_{t_k} \setminus \Omega^k_{t_k}} \vert \grad \zeta_R \vert h\\
&\le \frac{10}{R} + C\vol(U^k_{t_k}\setminus \Omega^k_{t_k}),\phantom{\int}
\end{align*} 
where we've used the fact that $t_k \in I_k$ and $\{\Omega^k_t\}\in \mathcal N_{R,10/R,\eta,\theta_k}$. 
It follows that 
\begin{equation}
\label{eq6} 
\limsup_{k\to\infty} \int_{U^k_{t_k}} \vert \grad \zeta_R\vert h \le \frac{10}{R},
\end{equation}
and so there is good control over $U^k_{t_k}$ in the transition region.

By hypothesis (H3), the function $h_R$ is an admissible prescription function on $B_{R+1}$. (See \cite{ZZ1} for the definition of admissible prescription functions.)  Moreover, there is no closed, embedded minimal surface contained in the level set $\{h_R = 0\}$.  The almost-minimizing, regularity, and index arguments now carry through unchanged to show that there is a critical sequence $\{U^k_{t_k}\}$ and a Caccioppoli set $\Omega$ such that 
$
\mathbf F(U^k_{t_k},\Omega) \to 0.
$
Moreover, $\Sigma = \bd \Omega$ is a smooth, almost embedded $h_R$-PMC, and $A^{h_R}(\Omega) = \omega(R)$, and the index of $\Omega$ with respect to $A^{h_R}$ is at most one.  Finally $\Omega$ satisfies 
\[
\int_{\Omega} \vert \grad \zeta_j\vert h \le \frac{10}{R}
\]
by equation (\ref{eq6}).  This proves the result. 
\end{proof}

\subsection{Finding the $h$-PMC} 

Again assume the function $h$ satisfies (H1)-(H4).  In this section, we show that, for large $j$, the surfaces $\Sigma_j$ constructed in Proposition \ref{min-max} have a component that does not intersect the transition region $B_{R_j+1}\setminus B_{R_j}$.  The first step is to establish uniform area bounds on the surfaces $\Sigma_j$.  

In the following proof, the idea of differentiating $A^{h_j}$ along the homothetic regions $s\Omega_j$ is taken from \cite{CalMu1}.  This is the only place in the proof where hypothesis (H2) is needed. 

\begin{prop}
There is a constant $C$ which does not depend on $j$ such that $\area(\Sigma_j)\le C$ for all $j$. 
\end{prop}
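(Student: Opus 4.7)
The plan is to derive a Pohozaev-type identity by differentiating $A^{h_j}$ along the homothetic rescalings $s \mapsto s\Omega_j$. Setting $f(s) := A^{h_j}(s\Omega_j)$ and changing variables $x = sy$ in the volume integral,
\[
f(s) = s^n \area(\Sigma_j) - s^{n+1} \int_{\Omega_j} h_j(sy)\, dy.
\]
Since $\Omega_j$ is a smooth critical point of $A^{h_j}$ (almost embedded is fine since the PMC equation holds on each embedded sheet) and the scaling flow is generated by the smooth vector field $X(x) = x$, we have $f'(1) = 0$. Computing $f'(1)$ directly and rearranging gives the identity
\[
n\,\area(\Sigma_j) = (n+1)\int_{\Omega_j} h_j + \int_{\Omega_j} \nabla h_j(y)\cdot y\, dy.
\]

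Next I would feed in the energy identity $\int_{\Omega_j} h_j = \area(\Sigma_j) - A^{h_j}(\Omega_j) = \area(\Sigma_j) - \omega(R_j)$ to rewrite the above as
\[
\area(\Sigma_j) = (n+1)\,\omega(R_j) - \int_{\Omega_j} \nabla h_j\cdot y\, dy.
\]
The strategy is now to bound the last integral from below by something of the form $-\sigma \int_{\Omega_j} h_j - C$, with $\sigma < 1$ and $C$ uniform in $j$; this is where (H2) enters.

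To produce this estimate I would split $\nabla h_j = h\, \nabla \zeta_{R_j} + \zeta_{R_j}\, \nabla h$. For the $\nabla h$ piece, partition $\Omega_j = (\Omega_j \cap B_\rho) \cup (\Omega_j \setminus B_\rho)$: on $B_\rho$ the integrand is uniformly bounded (contributing a constant), while on the complement hypothesis (H2) gives $|\nabla h\cdot y| < \sigma h$, so this piece is bounded below by $-\sigma \int_{\Omega_j} h_j$. For the $\nabla \zeta_{R_j}$ piece, note that $\nabla \zeta_{R_j}\cdot y = \zeta'(\|y\|-R_j)\|y\|$ is supported in the transition region $B_{R_j+1}\setminus B_{R_j}$ where $\|y\| \le R_j+1$, so
\[
\left|\int_{\Omega_j} h(\nabla \zeta_{R_j}\cdot y)\right| \le (R_j+1) \int_{\Omega_j} |\nabla \zeta_{R_j}|\, h \le (R_j+1)\cdot \frac{10}{R_j} \le 20,
\]
using the gradient bound from Proposition \ref{min-max}. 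Together, $\int_{\Omega_j} \nabla h_j\cdot y \ge -\sigma \int_{\Omega_j} h_j - C$.

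Substituting back yields
\[
(1-\sigma)\,\area(\Sigma_j) \le (n+1-\sigma)\,\omega(R_j) + C,
\]
and since $\sigma < 1$ and $\omega(R_j)$ is bounded above by $\omega(R_1)$ (by monotonicity), this produces the desired uniform area bound. The main obstacle is wiring up hypothesis (H2) with the gradient-in-the-transition-region estimate from Proposition \ref{min-max} to absorb the error term across the cutoff, so that a single $\sigma < 1$ coefficient appears on the right and can be moved to the left-hand side; once this bookkeeping is done the result is immediate.
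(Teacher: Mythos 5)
Your proposal is correct and follows essentially the same route as the paper: the homothety/Pohozaev identity from $\frac{d}{ds}\big\vert_{s=1}A^{h_j}(s\Omega_j)=0$, the bound on the $h\,\grad\zeta_{R_j}\cdot y$ term via $(R_j+1)\int_{\Omega_j}\vert\grad\zeta_{R_j}\vert h\le 20$, and hypothesis (H2) outside $B_\rho$ to absorb $\sigma\int_{\Omega_j}h_j$ into the left-hand side, yielding $(1-\sigma)\area(\Sigma_j)\le(n+1-\sigma)\,\omega(R_j)+C$. The only difference is cosmetic bookkeeping (you substitute the energy identity $\int_{\Omega_j}h_j=\area(\Sigma_j)-\omega(R_j)$ earlier than the paper does), and the final step using monotonicity of $\omega(R)$ is fine.
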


\begin{proof}
Fix a positive integer $j$.  For each $s > 0$ consider the homothetic  regions $s\Omega_j$. Since $\Sigma_j$ is an $h_{j}$-PMC, it follows that 
\begin{align*}
0 = \frac{d}{ds}\eval_{s=1} A^{h_{j}}(s\Omega_j) & = \frac{d}{ds}\eval_{s=1}\area(s\Sigma_j) - \int_{\Sigma_j} h_{j} X \cdot \nu\\
&= n\area(\Sigma_j) - \int_{\Omega_j} \div(h_{j}X),
\end{align*}
where $\nu$ is the outward normal to $\Sigma_j$ and $X$ denotes the vector field $X(x) = x$.  Now observe that 
\begin{align*}
\div(h_{j}X) &= h_j \div(X) + \grad h_j \cdot X\\
&= (n+1)h_j + \zeta_j \grad h \cdot X + h\grad \zeta_j \cdot X.
\end{align*}
Therefore, 
\begin{align*}
(n+1)A^{h_j}(\Omega_j) &= (n+1)A^{h_j}(\Omega_j) - \frac{d}{ds}\eval_{s=1} A^{h_{j}}(s\Omega_j) \\
&= \area(\Sigma_j) - \int_{\Omega_j} (\zeta_j \grad h \cdot X +  h \grad \zeta_j\cdot X).
\end{align*}
By construction, $\Omega_j$ satisfies 
\[
\int_{\Omega_j} h \grad \zeta_j \cdot X \le 2R_j \int_{\Omega_j} \vert \grad \zeta_j\vert h \le 2R_j \left(\frac{10}{R_j}\right) \le 20.
\]
Combined with the previous equation, this yields 
\[
\area(\Sigma_j) \le (n+1)A^{h_j}(\Omega_j) + 20 + \int_{\Omega_j} \zeta_j \grad h \cdot X.
\]
It follows that 
\begin{equation}
\label{eq4}
(1-\sigma) \area(\Sigma_j) \le \left(n+1-\sigma\right)A^{h_j}(\Omega_j) + 20 + \int_{\Omega_j} (\zeta_j\grad h\cdot X - \sigma  {h_j}).
\end{equation}
But there is an estimate 
\begin{align*}
\int_{\Omega_j} (\zeta_j\grad h\cdot X - \sigma  {h_j}) &= \int_{\Omega_j} \zeta_j(\grad h\cdot X - \sigma  {h})\\
&\le \int_{B_\rho} (\vert \grad h \cdot X\vert + \sigma \vert h\vert) + \int_{\Omega_j \setminus B_\rho} \zeta_j(\grad h \cdot X - \sigma h)
\end{align*}
and assumption (H2) guarantees that $\grad h \cdot X - \sigma h < 0$ outside $B_\rho$. Therefore 
\[
\int_{\Omega_j} (\zeta_j\grad h\cdot X - \sigma  {h_j}) \le C
\]
for a constant $C$ that does not depend on $j$.  Using this estimate in (\ref{eq4}) yields 
\[
(1-\sigma)\area(\Sigma_j) \le (n+1-\sigma)A^{h_j}(\Omega_j) + C.
\]
The desired upper bound for $\area(\Sigma_j)$ follows.
\end{proof}

The following result of Topping \cite{Top} shows that bounds on area and mean curvature imply a bound on diameter. 

\begin{prop}[Special Case of Theorem 1.1 in \cite{Top}]
Assume that $\Sigma^n \subset \R^{n+1}$ is a closed, connected, immersed hypersurface.  Then 
\[
\operatorname{diam}(\Sigma) \le C \int_\Sigma \vert H\vert^{n-1} 
\]
where $C$ depends only on $n$, and $H$ denotes the mean curvature of $\Sigma$. 
\end{prop}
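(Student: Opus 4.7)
The plan is to follow Topping's original argument, which rests on the classical monotonicity formula for immersed hypersurfaces together with a chaining argument. The analytic input comes from testing the first variation identity against a radial cutoff vector field $X(x) = (x-p)\phi(|x-p|/r)$, which yields for every $p \in \Sigma$ and every $r > 0$ the density-type inequality
\[
\omega_n \;\le\; \frac{\mathcal{H}^n(\Sigma \cap B_r(p))}{r^n} \;+\; \frac{1}{n}\int_{\Sigma \cap B_r(p)} \frac{|H(x)|}{|x-p|^{n-1}}\, d\mathcal{H}^n(x),
\]
where $\omega_n$ is the volume of the Euclidean unit $n$-ball.

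The central step is a local statement: if $p \in \Sigma$ and $r > 0$ are such that $\Sigma \not\subset B_r(p)$, then
\[
r \;\le\; C(n)\int_{\Sigma \cap B_r(p)} |H|^{n-1}\, d\mathcal{H}^n.
\]
To establish this I would rescale to $r = 1$ and decompose $\Sigma \cap B_1(p)$ dyadically in $|x-p|$. On each dyadic annulus, the monotonicity estimate above forces either a large area contribution or a large weighted mean curvature contribution. In the first case, the Michael--Simon Sobolev inequality (applied with test function $|H|^{n-2}$, say) produces the required $L^{n-1}$ lower bound on $H$. In the second case, H\"older's inequality with the conjugate pair $(n-1, (n-1)/(n-2))$ converts the weighted $L^1$-norm of $|H|$ into an $L^{n-1}$-norm of $|H|$ times a factor that can be absorbed using the same monotonicity formula.

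With the local bound in hand, I would conclude by a covering/chaining argument. Choose $p, q \in \Sigma$ realizing the diameter. By connectedness, there is a continuous path on $\Sigma$ from $p$ to $q$. Cover its image by balls $B_{r_i}(p_i)$ centered on $\Sigma$, with each $r_i$ chosen at the critical scale provided by the local bound so that $r_i \le C(n) \int_{\Sigma \cap B_{r_i}(p_i)} |H|^{n-1}$. A Besicovitch-type selection ensures the cover has controlled overlap while the diameters $r_i$ still sum to at least a fixed fraction of $\diam(\Sigma)$. Summing the individual lower bounds then gives
\[
\diam(\Sigma) \;\le\; \sum_i r_i \;\le\; C(n)\int_\Sigma |H|^{n-1}\, d\mathcal{H}^n.
\]

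The main obstacle is getting the exponent $n-1$ rather than $n$ or $1$. The monotonicity identity naturally produces $\int |H|\,|x-p|^{1-n}$, and converting this cleanly into $\int |H|^{n-1}$ requires simultaneously leveraging the area lower bound implicit in monotonicity and picking H\"older exponents that respect the correct scaling $H \mapsto \lambda^{-1} H$, $d\mathcal{H}^n \mapsto \lambda^n d\mathcal{H}^n$. This balancing, rather than the chaining or the monotonicity itself, is the technical heart of Topping's proof.
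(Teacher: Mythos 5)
The paper does not prove this proposition at all: it is quoted directly as a special case of Theorem 1.1 of \cite{Top}, so the only meaningful comparison is with Topping's own argument, whose overall architecture (monotonicity formula, a local lemma, then a covering--chaining step) your sketch does follow. The problem is that your ``central step'' is false as stated, and the error is in the quantifiers. You claim that for \emph{every} $p\in\Sigma$ and \emph{every} $r>0$ with $\Sigma\not\subset B_r(p)$ one has $r\le C(n)\int_{\Sigma\cap B_r(p)}|H|^{n-1}$. Take a closed hypersurface containing a flat $n$-disk (e.g.\ a large sphere with a cap flattened onto a hyperplane), let $p$ be the center of the flat region and $r$ small enough that $\Sigma\cap B_r(p)$ is exactly that flat disk: the right-hand side is then $0$ while $r>0$. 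Topping's Lemma 1.2 is existential: there is $\delta(n)>0$ such that for each $p\in\Sigma$ there exists \emph{some} radius $R>0$ with $\int_{\Sigma\cap B_R(p)}|H|^{n-1}\,d\mathcal{H}^n\ge\delta R$, and one has no control over which $R$ works. This changes the final step: you cannot choose balls ``at the critical scale'' along a path, but must accept the good radius each point provides, extract a disjoint subfamily by Vitali, and use connectedness to see that the $5R_i$-dilates chaining $p$ to $q$ have diameters summing to at least $|p-q|$; disjointness then lets you sum $\delta R_i\le\int_{\Sigma\cap B_{R_i}}|H|^{n-1}$ over the subfamily. That part of your plan survives with this modification.

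The second gap is that your proposed proof of the local statement does not close, for exactly the reason you flag in your last paragraph. Both $\int_{B_r}|H|^{n-1}$ and the density ratio $\mathcal{H}^n(\Sigma\cap B_r(p))/r^n$ are scale invariant, so when you decompose $\int_{B_r(p)}|H|\,|x-p|^{1-n}$ dyadically and apply H\"older with exponents $\bigl(n-1,(n-1)/(n-2)\bigr)$ on each annulus, the worst case contributes a fixed constant per annulus and the sum over infinitely many scales diverges; no rebalancing of exponents repairs this, and for $n=2$ the conjugate exponent $(n-1)/(n-2)$ does not even exist. Topping's proof of the existential lemma is instead a contradiction argument: if $\int_{\Sigma\cap B_R(p)}|H|^{n-1}<\delta R$ for every $R$, then the localized first-variation/Michael--Simon inequality produces a differential inequality for $V(R)=\mathcal{H}^n(\Sigma\cap B_R(p))$ forcing $V(R)\ge cR^n$ for all $R>0$, contradicting the finiteness of $\mathcal{H}^n(\Sigma)$. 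As written, your sketch correctly identifies the scaling difficulty but does not overcome it; if a proof rather than a citation is wanted, it is that stopping-time/differential-inequality argument that must be reproduced.
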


This immediately implies the next corollary. 

\begin{corollary}
\label{diambound}
There is a constant $C$ which does not depend on $j$ such that any connected component $\Gamma_j$ of $\Sigma_j$ satisfies $\operatorname{diam}(\Gamma_j) \le C$.
\end{corollary}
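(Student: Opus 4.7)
The proof is a short, two-line combination of the preceding proposition with the area bound just established. The plan is as follows.

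First I would recall that $\Sigma_j$ is an almost embedded $h_j$-PMC, so at every smooth point of $\Sigma_j$ the mean curvature satisfies $|H| = |h_j|$. Since $h_j = \zeta_{R_j} h$ with $0 \le \zeta_{R_j} \le 1$, and since $h$ converges smoothly to the constant $n$ at infinity by $(\mathrm{H1})$, the function $h$ is bounded on all of $\R^{n+1}$. Hence there is a constant $M = \sup_{\R^{n+1}} |h|$, independent of $j$, such that $|H| \le M$ pointwise on each $\Sigma_j$.

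Next, let $\Gamma_j$ be any connected component of $\Sigma_j$. Being a component of an almost embedded closed hypersurface, $\Gamma_j$ is a closed, connected, immersed hypersurface to which Topping's theorem applies. Thus
\[
\diam(\Gamma_j) \;\le\; C \int_{\Gamma_j} |H|^{n-1} \;\le\; C\,M^{n-1}\,\area(\Gamma_j) \;\le\; C\,M^{n-1}\,\area(\Sigma_j).
\]
The previous proposition gives a uniform bound $\area(\Sigma_j) \le C'$ independent of $j$. Combining yields the desired uniform diameter bound.

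There is essentially no obstacle here: the only mild point to check is that the connected components of the almost embedded surface $\Sigma_j$ are legitimate closed connected immersed hypersurfaces so that Topping's theorem can be applied, and that the mean curvature bound $|h_j| \le M$ is uniform in $j$ (which follows immediately from $|\zeta_{R_j}| \le 1$ and the boundedness of $h$).
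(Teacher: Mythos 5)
Your proposal is correct and is exactly the argument the paper intends: the mean curvature of the $h_j$-PMC $\Sigma_j$ is uniformly bounded by $\sup|h|$ (since $0\le\zeta_{R_j}\le 1$ and $h$ is bounded by (H1)), the area is uniformly bounded by the preceding proposition, and Topping's inequality then gives the uniform diameter bound on each component. The paper simply states that the Topping result ``immediately implies'' the corollary, and your two-line chain of inequalities is precisely that implication spelled out.
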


It remains to show that some component of $\Sigma_j$ does not intersect the transition region when $j$ is large. This will complete the proof of Theorem \ref{1}.

\begin{lem}
\label{classi} 
Fix a unit vector $\nu\in \R^{n+1}$ and a constant $\tau$ and let 
\[
h_\infty(x) = n\zeta(x\cdot \nu + \tau).
\]
Every closed $h_\infty$-PMC is a union of spheres of radius 1 contained in the region where $h_\infty = n$. 
\end{lem}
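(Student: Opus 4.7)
Let $\Sigma = \bd \Omega$ be a closed $h_\infty$-PMC bounding a Caccioppoli set $\Omega$, write $v(x) = x \cdot \nu + \tau$, and let $N$ denote the inward unit normal to $\Sigma$ (pointing into $\Omega$). The PMC equation then reads $H = h_\infty N$, where $H$ is the mean curvature vector. The plan is first to show that $\Omega \subset \{v \le 0\}$ by exploiting the monotonicity of $h_\infty$ in the direction $\nu$, and then to invoke the Alexandrov theorem on the resulting closed constant-mean-curvature surface.

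For Step 1, the key identity is $\Delta_\Sigma v = H \cdot \nu$, which is valid because $v$ is linear and so has vanishing ambient Hessian (standard computation for the Laplace--Beltrami of a restricted linear function). Integrating over the closed surface $\Sigma$ gives
\[
0 = \int_\Sigma \Delta_\Sigma v\, dA = \int_\Sigma H \cdot \nu\, dA = \int_\Sigma h_\infty\, N \cdot \nu\, dA,
\]
and applying the divergence theorem to the vector field $X(x) = h_\infty(x)\nu$ on $\Omega$ (whose outward normal along $\bd \Omega$ is $-N$) yields
\[
\int_\Omega \nu \cdot \grad h_\infty\, dV = \int_\Omega \div(h_\infty \nu)\, dV = -\int_\Sigma h_\infty\, N \cdot \nu\, dA = 0.
\]
Since $h_\infty(x) = n\zeta(v(x))$ with $\zeta$ smooth and decreasing, $\nu \cdot \grad h_\infty = n\zeta'(v) \le 0$ everywhere, so vanishing of the integral forces $\zeta'(v) \equiv 0$ on $\Omega$. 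Using strict monotonicity of $\zeta$ on $(0,1)$, this gives $\Omega \subset \{v \le 0\} \cup \{v \ge 1\}$.

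Any connected component $\Omega_+$ of $\Omega$ lying in $\{v \ge 1\}$ would have $\bd \Omega_+$ clopen in $\Sigma$---hence a union of entire connected components of $\Sigma$---and these components would lie in $\{h_\infty = 0\}$, producing closed minimal hypersurfaces in $\R^{n+1}$. But no such hypersurface exists, since coordinate functions restricted to a closed minimal hypersurface are harmonic and therefore constant by the strong maximum principle, which would collapse the hypersurface to a point. Thus $\Omega \subset \{v \le 0\}$ and $\Sigma \subset \{h_\infty = n\}$.

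On this region the PMC equation reduces to $H = n N$, so $\Sigma$ is a closed constant-mean-curvature hypersurface with mean curvature $n$. By the Alexandrov theorem---and specifically its adaptation to almost-embedded CMCs, which is routinely used in the PMC min-max literature---each connected component of $\Sigma$ is a round sphere, and the mean curvature value $n$ forces each to have radius $1$. The main subtlety I foresee is in Step 1: converting the vanishing integral $\int_\Omega \nu \cdot \grad h_\infty = 0$ into the set-theoretic containment $\Omega \subset \{v \le 0\} \cup \{v \ge 1\}$ requires strict decrease of $\zeta$ on the transition interval, and ruling out the $\{v \ge 1\}$ case requires the standard nonexistence of closed minimal hypersurfaces in Euclidean space.
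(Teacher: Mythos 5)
Your proof is correct and follows essentially the same route as the paper: the paper obtains the identity $\int_\Omega \grad h_\infty \cdot \nu = 0$ in one line by differentiating $A^{h_\infty}$ along the translates $\Omega + t\nu$ (using translation invariance of area), which is equivalent to your derivation via $\Delta_\Sigma v = H\cdot\nu$ plus the divergence theorem, and then both arguments conclude with monotonicity of $\zeta$ and Alexandrov. Your explicit elimination of components lying in $\{h_\infty = 0\}$ via nonexistence of closed minimal hypersurfaces is a detail the paper leaves implicit, and is a welcome addition.
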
 

\begin{proof}
Let $\Sigma = \bd \Omega$ be a compact, almost embedded $h_\infty$-PMC.   Differentiating $A^{h_\infty}$ along the translated regions $\Omega + t\nu$ gives 
\[
0 = \frac{d}{dt}\eval_{t=0} A^{h_\infty}(\Omega + t\nu) = -\frac{d}{dt}\eval_{t=0}\int_{\Omega+t\nu} h_\infty = -\int_\Omega \grad h_\infty \cdot \nu.
\]
Since $h_\infty$ is monotone in the $\nu$ direction, it follows that $\Omega$ is contained in the region where $h_\infty \equiv n$.  Hence, by the well-known result of Alexandrov \cite{Alek}, the $h_\infty$-PMC $\Sigma$ is a union of spheres of radius 1.  
\end{proof}

\begin{prop}
\label{noinf} 
Some connected component of $\Omega_j$ is contained in $B_{R_j}$ when $j$ is large. 
\end{prop}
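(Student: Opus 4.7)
My plan is to argue by contradiction. Suppose that along some subsequence (still indexed by $j$) every connected component of $\Omega_j$ meets the transition region $B_{R_j+1}\setminus B_{R_j}$; the goal is to contradict hypothesis (H4) by producing a unit sphere of energy $A^n(B_1)$ in the limit.

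Fix any connected component $W_j \subset \Omega_j$. By Corollary \ref{diambound}, $W_j$ has uniformly bounded diameter, so one can choose $p_j \in W_j$ with $R_j\le |p_j|\le R_j+1$. Translate: set $\tilde W_j:=W_j-p_j$ and $\tilde h_j(x):=h_j(x+p_j)$. After passing to a subsequence so that $p_j/|p_j|\to \nu$ and $|p_j|-R_j \to s \in[0,1]$, hypothesis (H1) together with the definition $h_j = \zeta_j h$ yields smooth convergence $\tilde h_j \to h_\infty := n\zeta(x\cdot \nu + s)$ on compact sets. Since $\bd W_j$ has uniformly bounded area and $W_j$ has index at most one with respect to $A^{h_j}$, standard smooth compactness for almost embedded PMCs, applied in the fixed ball where $\tilde W_j$ lives, yields a further subsequence with $\bd\tilde W_j \to \bd W_\infty$ smoothly. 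The limit $\bd W_\infty$ is an almost embedded $h_\infty$-PMC, so by Lemma \ref{classi} $W_\infty$ is a union of unit balls contained in $\{x\cdot\nu\le-s\}$. Since $0\in \tilde W_j$ for all $j$, some unit ball $B_1(q)\subset W_\infty$ contains the origin, forcing $|q|\le 1$ and $q\cdot\nu\le -1-s$; combined with $s\ge 0$ these inequalities give $s=0$ and $q=-\nu$, so $B_1(-\nu)\subset W_\infty$, and $\grad h_\infty \equiv 0$ on this ball (because $\zeta\equiv 1$ and $\zeta'\equiv 0$ on $(-\infty,0]$).

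The second variation of $A^{h_\infty}$ at $\bd B_1(-\nu)$ therefore coincides with the second variation of $A^n$ at the unit sphere, which has Morse index exactly one. By continuity of the Jacobi eigenvalues under smooth convergence, each connected component of $\Omega_j$ has index $\ge 1$ with respect to $A^{h_j}$ for $j$ large, so additivity of the index together with the bound $\ind(\Omega_j)\le 1$ from Proposition \ref{min-max} forces $\Omega_j$ to be connected. Then the limit $\Omega_\infty$ of $\tilde \Omega_j = \Omega_j - p_j$ is also connected and must equal $B_1(-\nu)$. Smooth convergence of boundaries combined with $\tilde h_j\to h_\infty$ in $L^\infty_{\mathrm{loc}}$ yields
\[
\omega(R_j) = A^{h_j}(\Omega_j) = A^{\tilde h_j}(\tilde \Omega_j)\longrightarrow A^{h_\infty}(B_1(-\nu))=A^n(B_1),
\]
contradicting $\omega(R_j)\to \omega < A^n(B_1)$ from Proposition \ref{radii} and (H4). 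The main obstacle is the smooth subsequential convergence asserted in the second paragraph: one must ensure that the sequence of almost embedded $\tilde h_j$-PMCs, with uniformly bounded area, diameter, and index, admits an almost embedded smooth limit in the fixed compact region. This rests on a standard but technical application of Schoen--Simon regularity combined with index/sheeting control; once it is in place, the remaining steps are essentially concrete limit computations.
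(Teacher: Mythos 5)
Your proposal is correct and its skeleton coincides with the paper's: argue by contradiction, translate each drifting component by a point in the transition annulus, use (H1) and the definition $h_j=\zeta_j h$ to identify the limiting prescription $h_\infty=n\zeta(x\cdot\nu+\tau)$, invoke the compactness theorem for almost embedded PMCs with bounded area and index (the paper cites Theorem 2.6 of \cite{Z}, which gives smooth multiplicity-one convergence away from at most one point), and then apply Lemma \ref{classi} to see that the limit is a union of unit balls in $\{h_\infty=n\}$. Where you genuinely diverge is in how the contradiction with (H4) is extracted. The paper simply notes that \emph{every} component drifts under the contradiction hypothesis, so each contributes at least $A^n(B_1)>0$ to $\lim_j A^{h_j}(\Omega_j)$, and summing over the (finitely many) components already gives $\lim_j\omega(R_j)\ge A^n(B_1)$. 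You instead observe that each limiting unit sphere has index one for $A^{h_\infty}$ (since $\grad h_\infty$ vanishes wherever $h_\infty=n$), deduce by lower semicontinuity of the index that every component of $\Omega_j$ eventually has index at least one, and use the total index bound from Proposition \ref{min-max} to force $\Omega_j$ to be connected before passing to the limit. This is a nice structural observation, but it is strictly more machinery than needed: the lower semicontinuity of index across the possible point of bad convergence requires the standard logarithmic cutoff/capacity argument (which you do not supply), and the careful computation pinning the limit ball at $-\nu$ is superfluous, since any unit sphere contained in $\{h_\infty=n\}$ automatically has $\grad h_\infty=0$ along it ($\zeta$ attains its maximum there, so $\zeta'=0$). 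Two small inaccuracies worth fixing: the existence of $p_j\in W_j$ with $R_j\le|p_j|\le R_j+1$ comes from the contradiction hypothesis, not from Corollary \ref{diambound} (the diameter bound is what confines $\tilde W_j$ to a fixed ball afterwards); and the final identity $\lim_j A^{h_j}(\Omega_j)=A^n(B_1)$ can be relaxed to an inequality $\ge$, which is all the contradiction requires and avoids having to prove that the limit is a single ball.
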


\begin{proof}
Suppose to the contrary there is a subsequence (not relabeled) such that every component of $\Omega_j$ intersects $B_{R_j + 1}\setminus B_{R_j}$.  Let $\Theta_j$ be a component of $\Omega_j$ and let $p_j$ be a point of $\Gamma_j = \bd\Theta_j$ which is farthest from the origin. Define the translations 
\[
\Gamma_j' = \Gamma_j - p_j, \quad \Theta_j' = \Theta_j - p_j, \quad h_j'(x) = h_j(x+p_j).
\]
Note that the surfaces $\Gamma_j'$ are all contained in a fixed ball $B$ centered at the origin by Corollary \ref{diambound}. 

Passing to a subsequence (not relabeled) and using (H1), there is a unit vector $\nu \in \R^{n+1}$ and a constant $\tau$, and a function $h_\infty(x) = n \zeta(x\cdot \nu + \tau)$ such that $h_j'$ converges smoothly to $h_\infty$ on $B$.  By the compactness theorem for prescribed mean curvature hypersurfaces with bounded area and index (\cite{Z} Theorem 2.6), passing to a subsequence (not relabelled) there is a smooth, almost-embedded  $h_\infty$-PMC $\Gamma_\infty = \bd \Theta_\infty$ contained in $B$ such that $\Gamma_j'$ converges smoothly with multiplicity one to $\Gamma_\infty$ away from at most one point.  In particular, 
$
A^{h_j}(\Theta_j) = A^{h_j'}(\Theta_j') \to A^{h_\infty}(\Theta_\infty).
$

By Lemma \ref{classi}, the set $\Theta_\infty$ is a union of balls of radius 1 contained in the region where $h_\infty = n$. It follows that $A^{h_\infty}(\Theta_\infty) \ge A^n(B_1)$.  Repeating the above argument with any remaining components of $\Omega_j\setminus \Theta_j$ extracts more spheres at infinity.  (There can be at most finitely many such components by the uniform area bound.)  Thus 
\[
\lim_{j\to \infty} A^{h_j}(\Omega_j) \ge A^{n}(B_1),
\]
and this is a contradiction. 
\end{proof}

The following corollary shows that the energy identity holds provided there are no $h$-PMCs with negative energy. 

\begin{corollary}
\label{enid} 
Let $h\f \R^{n+1}\to \R$ be a function satisfying the assumptions of Theorem \ref{main}.  Assume in addition that there are no smooth, almost embedded $h$-PMCs $\Gamma = \bd \Theta$ with $A^h(\Theta) < 0$.  Then there is a smooth, almost embedded $h$-PMC $\Sigma = \bd \Omega$ with $A^h(\Omega) = \omega$.  
\end{corollary}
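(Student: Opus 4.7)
The plan is to analyze the sequence of $h_j$-PMCs $\Omega_j$ from Proposition \ref{min-max} component-by-component, and to use the assumption that no $h$-PMC has negative $A^h$-energy together with (H4) to rule out any component drifting to infinity. First I would decompose $\Omega_j = \bigcup_{i=1}^{k_j} \Theta_j^i$ into connected components; the uniform area bound on $\Sigma_j$, the uniform mean curvature bound $|H_j| \le \sup |h|$, and the monotonicity formula give a uniform lower bound on the area of each component, so after a subsequence $k_j = k$ is constant. By Corollary \ref{diambound}, $\Theta_j^i \subset B_C(p_j^i)$ for any basepoint $p_j^i \in \Theta_j^i$ and a fixed constant $C$. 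Passing to further subsequences, each $i$ falls into one of three classes: \emph{bounded} (the basepoints $p_j^i$ stay in a fixed ball), \emph{interior-at-infinity} ($\Theta_j^i \subset B_{R_j}$ but $\|p_j^i\| \to \infty$), or \emph{transition} ($\Theta_j^i$ meets $B_{R_j+1}\setminus B_{R_j}$).

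For bounded components I would apply the compactness theorem for PMC hypersurfaces with bounded area and index (\cite{Z}, Theorem 2.6) to extract a multiplicity-one limit $\Theta^i$ such that $\bd \Theta^i$ is a smooth, almost embedded $h$-PMC; by hypothesis $A^h(\Theta^i) \ge 0$. For the two drifting classes I would translate by $-p_j^i$ and pass to a subsequential limit. In the interior-at-infinity case, $h_j(\cdot + p_j^i)$ converges smoothly to the constant $n$ on every compact set (by (H1) and the fact that $\zeta_{R_j}(\cdot + p_j^i) \equiv 1$ on compacta for $j$ large), so Theorem 2.6 of \cite{Z} together with Alexandrov's theorem identify the limit as a disjoint union of unit spheres. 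In the transition case, the argument of Proposition \ref{noinf} applies verbatim, producing a slab-type limit prescription $h_\infty(x) = n\zeta(x\cdot\nu + \tau)$ whose closed almost-embedded PMCs are unit spheres by Lemma \ref{classi}. Either way, each drifting limit component contributes at least $A^n(B_1)$ to the total energy.

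Writing $\Omega$ for the union of bounded limit components and $k_\infty$ for the total number of unit spheres arising from drifting components (counted with multiplicity), component-wise additivity of $A^{h_j}$ yields
\[
\omega = \lim_{j\to\infty} A^{h_j}(\Omega_j) \ge A^h(\Omega) + k_\infty \cdot A^n(B_1).
\]
Since $A^h(\Omega) \ge 0$ by the standing hypothesis and $\omega < A^n(B_1)$ by (H4), this forces $k_\infty = 0$. Hence every component is bounded, $\Omega$ is compactly supported, $\bd \Omega$ is a smooth, almost embedded $h$-PMC, and $A^h(\Omega) = \omega$.

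The main obstacle I anticipate is the interior-at-infinity case, which does not appear in Proposition \ref{noinf}: a component lying entirely inside $B_{R_j}$ can nevertheless drift to infinity, and ruling it out requires verifying that on a subsequence with $R_j - \|p_j^i\| \to \infty$ the rescaled prescription $h_j(\cdot + p_j^i)$ converges smoothly to $n$ on every compact set, so that Theorem 2.6 of \cite{Z} and Alexandrov's classification produce the requisite unit sphere. Subsidiary issues that need routine but careful attention are the uniform lower bound on component area (used to bound $k_j$), and the diagonal extraction of a single subsequence that realizes the classification of all $k$ components simultaneously while preserving the disjointness of their limits.
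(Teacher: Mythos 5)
Your proof is correct and follows essentially the same route as the paper, whose two-line proof simply observes that the argument of Proposition \ref{noinf} shows every drifting component carries energy at least $A^n(B_1)$, which together with the nonnegativity of the energies of the non-drifting limit components and (H4) rules out drifting. Your explicit treatment of the interior-at-infinity case (where $h_j(\cdot+p_j^i)\to n$ and Alexandrov applies directly) correctly fills in a case the paper leaves implicit.
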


\begin{proof}
The proof of Proposition \ref{noinf} shows that any component of $\Omega_j$ drifting to infinity carries energy at least $A^n(B_1)$.  Therefore, if all components of $\Omega_j$ have positive energy, it follows that no component of $\Omega_j$ can drift to infinity by (H4). 
\end{proof}

\section{CMCs in Asymptotically Flat Manifolds} 

\label{cmc}

In this section, we show that essentially the same argument can be used to construct CMCs in an asymptotically flat manifold. 

\subsection{Set Up} To begin, we recall the notion of an asymptotically flat 3-manifold. 

\begin{defn}
\label{afd} 
A manifold $(M^3,g)$ is called asymptotically flat if there is a decomposition $M = K \cup E$ where $K$ is compact and $E$ is diffeomorphic to $\R^3\setminus B_R$ for some $R > 0$.  In the corresponding coordinates on $E$, it is further required that 
$
g_{ij} = \delta_{ij} + h_{ij}  
$
where $h_{ij}\to 0$ smoothly as $r\to \infty$ and moreover 
\[
h_{ij} = O(r^{-1}), \quad h_{ij,k} = O(r^{-2})
\]
as $r\to \infty$.  
\end{defn}

For the rest of this section, let $(M^3,g)$ be a fixed asymptotically flat 3-manifold.  Let $M= K \cup E$ be the decomposition of $M$ as per Definition \ref{afd}. Fix a constant $c > 0$.

\begin{defn}
Let $\mathcal P$ be the set of all $c$-mountain pass paths in $M$.  Also let $\omega$ be the one-parameter min-max width for $A^c$. 
\end{defn}

\begin{prop}
The one parameter min-max width satisfies $0 < \omega \le A^c(B_{2/c})$.  Here $B_{2/c}$ is a Euclidean ball of radius $2/c$ and $A^c$ is computed with respect to the Euclidean metric.   
\end{prop}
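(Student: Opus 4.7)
The plan is to mimic the proof of Proposition \ref{estmm}, adjusting for the fact that the ambient manifold is $(M^3, g)$ rather than flat Euclidean space. The lower bound $\omega > 0$ will come from an isoperimetric inequality on $M$, and the upper bound $\omega \le A^c(B_{2/c})$ will come from constructing explicit mountain pass paths made of round Euclidean balls placed deep inside the asymptotically flat end $E$.

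For the lower bound, the key input is that an asymptotically flat 3-manifold supports a global isoperimetric inequality: there is a constant $C_M > 0$ such that $\area(\bd \Omega) \ge C_M \vol(\Omega)^{2/3}$ for every $\Omega \in \mathcal{C}(M)$. This follows because the Euclidean isoperimetric inequality holds up to a small perturbation in the asymptotic region (by asymptotic flatness) and because the compact part $K$ satisfies a local isoperimetric inequality. With this in hand, for any $\Omega$ of sufficiently small volume one gets
\[
A^c(\Omega) \ge \vol(\Omega)^{2/3}\left[C_M - c\, \vol(\Omega)^{1/3}\right],
\]
so there are positive constants $v$ and $a$ with $A^c(\Omega) \ge 0$ whenever $\vol(\Omega) \le v$ and $A^c(\Omega) \ge a$ whenever $\vol(\Omega) = v$. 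By $\mathbf{F}$-continuity, every mountain pass path with $A^c(\Omega_1) < 0$ must cross the level $\vol(\Omega) = v$, giving $\omega \ge a > 0$.

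For the upper bound, I would work in the asymptotic coordinates on $E$. Fix $r > 0$ large and a point $x \in E$ with $\|x\|$ large enough that the Euclidean ball $B_r(x)$ lies inside $E$. Let $\Omega_t^x = B_{rt}(x)$, viewed as a subset of $M$. Then $\Omega_0^x = \emptyset$ and, once $r$ is large and $\|x\|$ is larger still, one has $A^c(\Omega_1^x) < 0$, so $\{\Omega_t^x\}$ is a $c$-mountain pass path. Since $g_{ij} = \delta_{ij} + O(\|x\|^{-1})$ with first derivatives of order $\|x\|^{-2}$, both the $g$-area of $\bd \Omega_t^x$ and the $g$-volume of $\Omega_t^x$ differ from their Euclidean counterparts by $o(1)$, uniformly in $t \in [0,1]$, as $\|x\| \to \infty$. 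Hence
\[
A^c(\Omega_t^x) = \area_{\R^3}(\bd B_{rt}) - c \vol_{\R^3}(B_{rt}) + \eps(x,r,t),
\]
where $\eps(x,r,t) \to 0$ uniformly as $\|x\|\to\infty$. An elementary calculation shows the Euclidean quantity $4\pi s^2 - \tfrac{4}{3}\pi c s^3$ is maximized at $s = 2/c$ with maximum value $A^c(B_{2/c})$. Taking the sup over $t$ and then letting $\|x\| \to \infty$ yields $\omega \le A^c(B_{2/c})$.

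The main obstacle here is essentially bookkeeping rather than a genuine conceptual difficulty: one must carefully verify that the $g$-area and $g$-volume of the test balls are approximated by their Euclidean values uniformly in $t$, which follows immediately from the decay of $g - \delta$ and its derivatives. The global isoperimetric inequality on $(M,g)$ is standard for asymptotically flat 3-manifolds but probably worth flagging briefly in the write-up.
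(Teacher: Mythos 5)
Your proposal is correct and follows the same route as the paper, which simply notes that the isoperimetric inequality holds on asymptotically flat manifolds and otherwise refers back to the Euclidean case (Proposition \ref{estmm}): lower bound from the isoperimetric inequality, upper bound from paths of coordinate balls drifting to infinity in the end, where the error now comes from the decay of $g-\delta$ rather than of $h-c$. Your write-up just makes explicit the uniform approximation of $g$-area and $g$-volume by their Euclidean values, which the paper leaves implicit.
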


\begin{proof}
The isoperimetric inequality holds in an asymptotically flat manifold.  Therefore the proof is identical to that of Proposition \ref{estmm}.
\end{proof}

The crucial assumption required for one-parameter min-max to detect a CMC is the following:
\begin{itemize}
\item[(H)] The one parameter width $\omega \text{ is strictly less than } A^c(B_{2/c})$. 
\end{itemize}
We will discuss some geometric conditions which imply that (H) holds in a later section.  For simplicity, in the rest of the paper we will assume that $c = 2$.   In this case, hypothesis (H) becomes that $\omega < A^2(B_1) = \frac{4\pi}{3}$.

\subsection{The Approximate Problems} 

As above fix $c = 2$. Let $(M^3,g)$ be an asymptotically flat manifold satisfying condition (H) for $c=2$, and let $M = K \cup E$ be its decomposition into a compact piece and an asymptotically flat end.  Let $S_r$ be the sphere of Euclidean radius $r$ in $E$, and let $B_r$ be the open, bounded set enclosed by $S_r$ in $M$.  The following asymptotic formula for the mean curvature of $S_r$ was proved by Huang in \cite{Huang}. 

\begin{prop}
The mean curvature of $S_r$ satisfies 
$
H_{S_r} = \frac{2}{r} + O(r^{-2})
$
as $r\to \infty$.
\end{prop}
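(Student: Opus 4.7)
The plan is a direct coordinate computation in the asymptotic chart on $E$ supplied by Definition \ref{afd}. Write $g_{ij} = \delta_{ij} + h_{ij}$ with $h_{ij} = O(r^{-1})$ and $\partial_k h_{ij} = O(r^{-2})$, and correspondingly $g^{ij} = \delta^{ij} + k^{ij}$ with $k^{ij}$ satisfying the same estimates. First I would identify the outward $g$-unit normal to $S_r = \{|x| = r\}$. Because the tangent space to $S_r$ is the Euclidean-orthogonal complement of $x$ regardless of the metric, the condition $g(\nu_g, T) = 0$ for all such $T$ forces
$$\nu_g^i = \frac{g^{ij} x_j}{Q}, \qquad Q := \sqrt{g^{k\ell} x_k x_\ell}.$$

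Next I would compute $H_{S_r} = \operatorname{div}_g \nu_g$ using the standard identity
$$\operatorname{div}_g \nu_g = \partial_i \nu_g^i + \nu_g^i\, \partial_i \log \sqrt{|g|}.$$
The second summand is immediately $O(r^{-2})$, since $\partial_i \log \sqrt{|g|} = \tfrac{1}{2} g^{jk} \partial_i g_{jk} = O(r^{-2})$ and $\nu_g^i$ is bounded. For the first summand, a few short estimates from the hypotheses give
$$Q = r + O(1), \qquad \partial_i Q = \frac{x^i}{r} + O(r^{-1}), \qquad \partial_i(g^{ij} x_j) = 3 + O(r^{-1}),$$
where the last estimate uses $h_{ij,k} = O(r^{-2})$ to bound $\partial_i k^{ij} \cdot x_j$ by $O(r^{-1})$ rather than the weaker $O(1)$ one would get from the size of $k$ alone. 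Substituting into
$$\partial_i \nu_g^i = \frac{\partial_i(g^{ij} x_j)}{Q} - \frac{g^{ij} x_j\, \partial_i Q}{Q^2}$$
yields leading contributions $3/r$ and $-1/r$, summing to $2/r$, with all remaining terms of size $O(r^{-2})$.

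The only delicate point is the bookkeeping in this last substitution: one must verify that every appearance of undifferentiated $h$ is compensated either by a derivative falling onto $h$ elsewhere, or by division through $Q \sim r$ or $Q^2 \sim r^2$, so that the surviving corrections really are $O(r^{-2})$ and not merely $O(r^{-1})$. This is routine product-rule manipulation, with no deeper input needed: the result is essentially a perturbation of the flat identity $\operatorname{div}_\delta(x/r) = 2/r$, and the improved decay $h_{ij,k} = O(r^{-2})$ of $\partial h$ is exactly what is required to push the first correction to the mean curvature down to the quadratic scale.
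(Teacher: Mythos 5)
Your proof is correct, but it is a genuinely different route from the paper's: the paper disposes of this proposition in one line by citing the more precise asymptotic expansion of $H_{S_r}$ in Lemma 2.1 of Huang's paper \cite{Huang}, whereas you carry out the coordinate computation from scratch. Your version is self-contained and uses exactly the decay hypotheses $h_{ij}=O(r^{-1})$, $h_{ij,k}=O(r^{-2})$ built into the definition of asymptotic flatness here, and the bookkeeping checks out: $\partial_i(g^{ij}x_j)=3+k^{ii}+(\partial_ik^{ij})x_j=3+O(r^{-1})$, $Q=r+O(1)$, and the second term contributes $\frac{g^{ij}x_j\,\partial_iQ}{Q^2}=\frac{r+O(1)}{r^2+O(r)}=\frac1r+O(r^{-2})$, so the two pieces combine to $\frac{3}{r}-\frac{1}{r}+O(r^{-2})$ as claimed. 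The one step you state without justification is the identity $H_{S_r}=\operatorname{div}_g\nu_g$; this needs $\nu_g$ to be extended off $S_r$ as a \emph{unit} field so that $g(\nabla_{\nu_g}\nu_g,\nu_g)=0$ kills the normal-normal component, which your formula $\nu_g^i=g^{ij}x_j/Q$ does provide (and the paper itself records this identity in the proposition immediately following). The trade-off is the usual one: the citation buys brevity and a sharper expansion (with the mass term identified), while your computation buys transparency and independence from the reference at the cost of a page of product-rule estimates.
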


\begin{proof}
This follows at once from the more precise asymptotic formula derived in Lemma 2.1 of \cite{Huang}.
\end{proof}

Let $N$ be the outward unit normal vector to these spheres $S_r$.  Then $N$ forms an approximate calibration. 

\begin{prop}
The divergence of $N$ satisfies $\div_g(N) = \frac{2}{r} + O(r^{-2})$. In particular, $\div_g(N) \ge \frac{1}{r}$ for all $r$ sufficiently large. 
\end{prop}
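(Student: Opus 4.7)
The plan is to reduce the statement to the previous proposition by invoking the pointwise identity $\div_g(N) = H_{S_r}$ on $S_r$. Since this identity relies only on $\lvert N\rvert_g \equiv 1$ and $N$ being the unit normal to the leaves of the foliation $\{S_r\}$, no further asymptotic information is needed beyond what is already encoded in the preceding mean curvature expansion.

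Concretely, for $r$ large enough that $S_r \subset E$, I would fix a point $p \in S_r$ and choose a $g$-orthonormal basis $e_1, e_2$ of $T_p S_r$, so that $\{e_1, e_2, N(p)\}$ is a $g$-orthonormal basis of $T_p M$. Then
\[
\div_g(N)(p) \;=\; \sum_{i=1}^{2} g(\nabla_{e_i} N, e_i) \;+\; g(\nabla_N N, N).
\]
The second term vanishes because $g(\nabla_N N, N) = \tfrac{1}{2} N(\lvert N\rvert_g^2) = 0$, and the first sum is precisely the mean curvature $H_{S_r}(p)$ with the sign convention compatible with $N$ pointing outward (the same one used in the previous proposition). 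Since every point of $E$ sufficiently far from $K$ lies on exactly one coordinate sphere, this identifies $\div_g(N)$ with $H_{S_r}$ pointwise throughout the end.

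Substituting $H_{S_r} = 2/r + O(r^{-2})$ from the previous proposition yields the first assertion. For the \emph{in particular} clause, I would pick $C > 0$ so that $\lvert \div_g(N) - 2/r\rvert \le C/r^2$ for $r$ large; then $\div_g(N) \ge 2/r - C/r^2 \ge 1/r$ whenever $r \ge C$.

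The only real subtlety is matching sign conventions so that the $H_{S_r}$ appearing in the decomposition of $\div_g(N)$ agrees with the one in the preceding proposition. Since both are computed using the outward unit normal $N$ (and both recover the Euclidean prototype $2/r$), the signs match automatically, and there is no serious obstacle — the proposition is essentially a direct corollary of the previous one.
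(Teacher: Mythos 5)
Your proposal is correct and follows exactly the paper's argument: decompose $\div_g(N)$ at a point of $S_r$ into the tangential divergence (which equals $H_{S_r}$) plus $g(\nabla_N N, N)$, observe the latter vanishes since $\lvert N\rvert_g \equiv 1$, and invoke the preceding mean curvature expansion. The added detail on the \emph{in particular} clause and the sign-convention check are fine but not different in substance.
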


\begin{proof}
At a point $p\in S_r$ one computes that 
\begin{align*}
\div(N) &= \div_{T_pS_r}(N) + \la \del_N N,N\ra = H_{S_r}(p) + 0 = \frac{2}{r} + O(r^{-2}),
\end{align*}
as needed.
\end{proof}

The lapse function $\phi$ is the unique function such that the flow of $\phi N$ takes $S_{r_1}$ to $S_{r_2}$ in time $r_2 - r_1$ for all sufficiently large $r_2 \ge r_1$.

\begin{prop}
The lapse function satisfies $\phi(x) \ge \frac{3}{5}$ when $\|x\|$ is sufficiently large. 
\end{prop}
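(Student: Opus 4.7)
The plan is to derive an explicit pointwise formula for $\phi$ and then read off the limit as $\|x\| \to \infty$ from the asymptotic flatness of $g$. If $\Phi_t$ denotes the flow of $\phi N$, the defining property $\Phi_t(S_{r_1}) = S_{r_1+t}$ says that along any integral curve $\gamma(t)$ of $\phi N$ the Euclidean radial function $r(x) = \|x\|$ advances at unit speed: $r(\gamma(t)) = r(\gamma(0)) + t$. Differentiating at $t=0$ and using $dr(v) = \la \partial_r, v\ra_E$ (with $\partial_r$ the Euclidean unit radial vector and $\la\cdot,\cdot\ra_E$ the Euclidean inner product) gives
\[
\phi \la \partial_r, N\ra_E = 1.
\]

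To evaluate the right-hand factor, I would decompose $N = \lambda(\partial_r + Y)$, where $Y$ is Euclidean-tangent to $S_r$ and $\lambda > 0$. The $g$-orthogonality condition $g(N, v) = 0$ for every $v \in TS_r$, combined with $g_{ij} = \delta_{ij} + h_{ij}$ and $h_{ij} = O(r^{-1})$, reduces to a linear system on $Y$ of the form $\la Y, v\ra_E = -h(\partial_r, v) - h(Y, v)$, whose solution satisfies $|Y|_E = O(r^{-1})$. Plugging back into $g(N,N) = 1$ then yields
\[
\lambda^2\bigl(1 + O(r^{-1})\bigr) = 1,
\]
so $\lambda = 1 + O(r^{-1})$. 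Since $\partial_r \perp_E Y$, we obtain $\la \partial_r, N\ra_E = \lambda = 1 + O(r^{-1})$, and hence $\phi = \lambda^{-1} = 1 + O(r^{-1})$.

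In particular $\phi(x) \to 1$ as $\|x\| \to \infty$, which gives $\phi(x) \ge 3/5$ once $\|x\|$ is sufficiently large, as required. The only genuine work is the bookkeeping for the two estimates $|Y|_E = O(r^{-1})$ and $\lambda = 1 + O(r^{-1})$; these just amount to inserting the decay $h_{ij} = O(r^{-1})$ into inner products of vectors of Euclidean length $O(1)$, and I do not expect any obstacle beyond keeping the orders of decay straight.
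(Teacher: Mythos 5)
Your proof is correct and follows the same idea as the paper's (which simply asserts that the Euclidean lapse is identically $1$ and invokes asymptotic flatness); you have just filled in the details, deriving $\phi = \la \partial_r, N\ra_E^{-1} = 1 + O(r^{-1})$ explicitly. No issues.
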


\begin{proof}
This follows from the fact that the lapse function for the Euclidean metric is identically 1, and the fact that $g$ is asymptotically flat. 
\end{proof}

\begin{defn}
Let $\zeta$ be the function from Definition \ref{zetadef}. 
Given $R$ sufficiently large, define a function $\zeta_R$ on $M$ by setting $\zeta_R(x) = \zeta(r(x) - R)$ on $E$ and letting $\zeta_R \equiv 1$ on $K$. 
\end{defn}

The following proposition is easy to verify. 

\begin{prop}
\label{gr}
The gradient of $\zeta_R$ is given by 
\[
\grad \zeta_R(x) = \frac{\zeta'(r(x)-R)}{\phi(x)}N(x).
\] 
In particular, $\vert \grad \zeta_R\vert \le \frac{5}{3}\vert \zeta'(r-R)\vert$ for $r$ sufficiently large. 
\end{prop}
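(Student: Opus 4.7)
The plan is to compute $\grad\zeta_R$ directly by the chain rule, using the definition of the lapse function to identify $\grad r$.

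First, on the end $E$, we have $\zeta_R(x) = \zeta(r(x)-R)$, so the chain rule gives
\[
\grad\zeta_R(x) = \zeta'(r(x)-R)\,\grad r(x).
\]
It therefore suffices to show that $\grad r = \phi^{-1} N$ (on the region where this makes sense, i.e., for $r$ large enough that $N$ and $\phi$ are defined). For this, I would argue as follows. Since the spheres $S_r$ are the level sets of the function $r$, and $N$ is by definition the unit normal to these spheres, the gradient $\grad r$ is parallel to $N$, so $\grad r = \lambda N$ for some scalar $\lambda$. To compute $\lambda$, note that the definition of the lapse function says that the flow of $\phi N$ takes $S_{r_1}$ to $S_{r_2}$ in time $r_2 - r_1$; equivalently, $r$ increases at unit rate along this flow, so $(\phi N)(r) = 1$, i.e., $N(r) = 1/\phi$. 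On the other hand $N(r) = \la \grad r, N\ra_g = \lambda$ since $N$ is a unit vector. Hence $\lambda = 1/\phi$, which gives the desired identity
\[
\grad r = \frac{1}{\phi} N.
\]

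Substituting into the chain-rule formula yields the claimed expression for $\grad\zeta_R$ on $E$. Outside of the transition region, $\zeta'(r-R) = 0$, so the formula also holds trivially on $K$ (and on the portion of $E$ where $\zeta_R$ is constant). Taking norms, since $N$ is a $g$-unit vector,
\[
|\grad \zeta_R| = \frac{|\zeta'(r-R)|}{\phi}.
\]
The bound $|\grad\zeta_R| \le \tfrac{5}{3}|\zeta'(r-R)|$ then follows immediately from the previous proposition, which gives $\phi \ge 3/5$ (hence $1/\phi \le 5/3$) for $r$ sufficiently large. There is no real obstacle here; the only subtlety is recognizing that the lapse condition on $\phi N$ directly encodes the rate of change of $r$, which pins down $\grad r$ after using that $N$ is normal to the level sets.
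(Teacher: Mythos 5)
Your proof is correct and is exactly the computation the paper has in mind (the paper omits it as ``easy to verify''): chain rule plus the identification $\grad r = \phi^{-1}N$, which you justify properly from the definition of the lapse function, and then the bound $\phi \ge 3/5$ from the preceding proposition.
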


\begin{defn}
Let $\mathcal P_R$ denote the set of $2\zeta_R$-mountain pass paths.  Also let $\omega(R)$ be the one-parameter min-max width for $A^{2\zeta_R}$. 
\end{defn}

The $A^{2\zeta_R}$ functional admits barriers near infinity.  This follows from the fact that $N$ is approximately a calibration.   Let $R\mapsto \eps_R$ be the decreasing function from Definition \ref{epsr} so that 
\[
2\zeta_R(x) \le \frac{1}{r}
\]
whenever $\|x\| \ge R + 1 -2\eps_R$.

\begin{prop}
\label{intersect} 
Let $\Omega\in \mathcal C(M)$ and fix some large $R$.  The intersection $\Omega \cap B_{R+1-2\eps_R}$ belongs to $\mathcal C(M)$ and satisfies $A^{2\zeta_{R}}(\Omega \cap B_{R+1-2\eps_R}) \le A^{2\zeta_{R}}(\Omega)$. 
\end{prop}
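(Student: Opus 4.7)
The plan is to follow the template of Lemma \ref{intersect} from the Euclidean setting, with the approximate calibration $N$ playing the role of the radial field $x/\|x\|$. Since the intersection of two Caccioppoli sets is Caccioppoli, $\Omega\cap B_{R+1-2\eps_R}\in \mathcal C(M)$. For the energy inequality, I would set $B=B_{R+1-2\eps_R}$ and fix $R$ large enough that $K\subset B$, that $\div_g(N)\ge 1/r$ on $E\setminus B$, and that $2\zeta_R(x)\le 1/r$ whenever $r(x)\ge R+1-2\eps_R$. This last bound follows directly from Definition \ref{epsr}: if $r-R\ge 1-2\eps_R$ then $\zeta(r-R) < 1/(4r)$, hence $2\zeta_R(x) < 1/(2r)\le 1/r$. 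The constant $2$ appearing in the prescription $2\zeta_R$ is exactly compensated by the factor of $4$ built into Definition \ref{epsr}.

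The main step is to apply the Gauss-Green formula for sets of finite perimeter to $N$ on $\Omega\setminus B$, which lies entirely in $E$ by the choice of $R$. Up to $\mathcal H^2$-null sets, the reduced boundary of $\Omega\setminus B$ splits into $\bd\Omega\setminus B$, with outward normal $\nu$ equal to the outward normal of $\Omega$, and $\bd B\cap \Omega$, with outward normal $-N$. Using $|N\cdot \nu|\le 1$,
\[
\int_{\Omega\setminus B}\div_g(N)\, dV = \int_{\bd\Omega\setminus B} N\cdot \nu \, d\mathcal H^2 - \area(\bd B\cap \Omega) \le \area(\bd\Omega\setminus B) - \area(\bd B\cap \Omega).
\]
On the other hand, the pointwise inequality $2\zeta_R\le 1/r\le \div_g(N)$ on $\Omega\setminus B$ gives $\int_{\Omega\setminus B} 2\zeta_R\, dV \le \int_{\Omega\setminus B}\div_g(N)\, dV$. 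Combining these with the flat-chain identity $\bd(\Omega\cap B)=(\bd\Omega\cap B)+(\bd B\cap \Omega)$ (mod two) yields
\[
A^{2\zeta_R}(\Omega\cap B) - A^{2\zeta_R}(\Omega) = \area(\bd B\cap \Omega) - \area(\bd\Omega\setminus B) + \int_{\Omega\setminus B} 2\zeta_R \le 0,
\]
which is the desired inequality.

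The only point requiring care, rather than pure bookkeeping, is justifying the Gauss-Green formula and the boundary decomposition in the BV/Caccioppoli category; for this I would simply invoke the standard theory of sets of finite perimeter (consistent with the framework used in \cite{P} and throughout the paper). The asymptotic flatness of $g$ enters the argument only through the two already-established estimates: $\div_g(N)\ge 1/r$ and the ball $B_{R+1-2\eps_R}$ exhausting the compact core $K$. Everything else is a direct transcription of the Euclidean argument.
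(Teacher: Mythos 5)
Your proof is correct and follows the same route as the paper: the paper's proof simply notes that $\div_g(N)\ge 1/r$ and $2\zeta_R\le 1/r$ outside $B_{R+1-2\eps_R}$ and then invokes the divergence-theorem argument of the Euclidean lemma with $N$ in place of $x/\|x\|$, which is exactly the computation you write out in detail. The only cosmetic point is that your displayed identity for $A^{2\zeta_R}(\Omega\cap B)-A^{2\zeta_R}(\Omega)$ should be an inequality $\le$ rather than an equality, since mod-two cancellation can only decrease $\area(\bd(\Omega\cap B))$; this does not affect the conclusion.
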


\begin{proof} Note that $\div(N) \ge 1/r$ and that $\zeta_R \le 1/r$ outside $B_{R+1-2\eps_R}$.  Therefore the proof is identical to that of Lemma \ref{intersect}, using the vector field $N$ in place of $x/\|x\|$. 
\end{proof}

The next sequence of results copies the ideas from Section \ref{pmc} almost verbatim. 

\begin{defn}
Let $\mathcal Q_R$ denote the set of all $2\zeta_R$-mountain pass paths $\{\Omega_t\}$ such that $\Omega_t \subset B_{R+1-\eps_R}$ for all $t\in [0,1]$. 
\end{defn}

\begin{prop}
The one-parameter min-max width for $A^{2\zeta_R}$ does not change if we restrict to $R$-mountain pass paths contained in $B_{R+1-\eps_R}$.  That is,
\[
\omega(R) = \inf_{\{\Omega_t\}\in \mathcal Q_R} \bigg[\sup_{t\in[0,1]} A^{2\zeta_R}(\Omega_t)\bigg]
\]
\end{prop}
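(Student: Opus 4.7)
The plan is to repeat the argument used for Proposition \ref{support} verbatim, with the vector field $N$ and Proposition \ref{intersect} in the current section playing the roles that $x/\|x\|$ and the earlier Lemma \ref{intersect} played on Euclidean space. The inequality $\omega(R) \le \inf_{\mathcal Q_R}[\sup_t A^{2\zeta_R}(\Omega_t)]$ is trivial because $\mathcal Q_R \subset \mathcal P_R$, so the content is the reverse inequality.

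First I would fix $\eps > 0$ and choose a path $\{\Omega_t\}_{t\in [0,1]} \in \mathcal P_R$ satisfying $\sup_{t\in[0,1]} A^{2\zeta_R}(\Omega_t) \le \omega(R) + \eps$. I would then define $U_t = \Omega_t \cap B_{R+1-2\eps_R}$. By the current Proposition \ref{intersect}, each $U_t$ is a Caccioppoli set with $A^{2\zeta_R}(U_t) \le A^{2\zeta_R}(\Omega_t)$, and in particular $\sup_t A^{2\zeta_R}(U_t) \le \omega(R) + \eps$. As in the proof of Proposition \ref{support}, I would invoke the argument of Dey \cite{Dey} to conclude that the intersection family $\{U_t\}$ is still continuous in the flat topology and has no concentration of mass.

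Next I would apply the interpolation theorem of Zhou \cite{Z} to obtain an $\mathbf F$-continuous family $\{W_t\}_{t\in [0,1]}$ in $\mathcal C(M)$ such that $W_0 = \emptyset$, $W_1 = U_1$, each $W_t$ is supported in $B_{R+1-\eps_R}$, and $A^{2\zeta_R}(W_t) \le A^{2\zeta_R}(U_t) + \eps$ for all $t$. (Note that $U_1 = \Omega_1 \cap B_{R+1-2\eps_R}$ still satisfies $A^{2\zeta_R}(U_1) \le A^{2\zeta_R}(\Omega_1) < 0$, so $\{W_t\}$ is indeed a $2\zeta_R$-mountain pass path; it belongs to $\mathcal Q_R$.) Then $\sup_t A^{2\zeta_R}(W_t) \le \omega(R) + 2\eps$, and letting $\eps \to 0$ yields the desired equality.

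The only subtlety that requires a word of comment is that the interpolation theorem of \cite{Z} is formulated on compact manifolds, so I would restrict attention throughout to a fixed large compact domain containing $B_{R+1}$ and carry out the interpolation there; since all families involved are supported in $B_{R+1-2\eps_R}$, this causes no issue. The main obstacle one might worry about is that the truncation $U_t = \Omega_t \cap B_{R+1-2\eps_R}$ could jump discontinuously when $\bd \Omega_t$ has positive measure on $\bd B_{R+1-2\eps_R}$, but Dey's argument handles exactly this point by showing the mass-concentration-free property that the interpolation theorem requires as input.
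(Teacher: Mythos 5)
Your proposal is correct and follows exactly the route the paper intends: the paper's proof of this proposition is literally ``Identical to Proposition \ref{support},'' and you have carried out that same truncation-plus-interpolation argument, correctly substituting the asymptotically flat barrier statement (the version of Proposition \ref{intersect} using the vector field $N$) for the Euclidean one. Your added checks---that $U_1$ still has negative $A^{2\zeta_R}$-value and that Dey's argument supplies the flat-continuity and no-concentration-of-mass hypotheses needed for Zhou's interpolation---are exactly the points that make the transcription go through.
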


\begin{proof}
Identical to Proposition \ref{support}.
\end{proof} 

Next we employ the monotonicity of $\omega(R)$ to construct mountain pass paths with controlled behavior in the transition region where $\zeta_R$ goes from $1$ to $0$. 

\begin{prop}
The function $R \mapsto \omega(R)$ is decreasing. 
\end{prop}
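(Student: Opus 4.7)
The plan is to mirror the proof of the analogous monotonicity statement for the Euclidean widths in Section \ref{pmc}. The key point is that a near-optimal mountain pass path for $A^{2\zeta_R}$ automatically serves as a mountain pass path for $A^{2\zeta_{R'}}$ whenever $R' > R$, and with no larger maximum value.

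First I would fix $R < R'$ and a small $\eps > 0$, and invoke the definition of $\omega(R)$ to choose a family $\{\Omega_t\}_{t \in [0,1]} \in \mathcal P_R$ satisfying
\[
\sup_{t \in [0,1]} A^{2\zeta_R}(\Omega_t) \le \omega(R) + \eps.
\]
Next comes the main observation: because $\zeta$ is a decreasing function of one real variable, and because $\zeta_R(x) = \zeta(r(x) - R)$ on $E$ while $\zeta_R \equiv 1$ on $K$, one has $\zeta_R(x) \le \zeta_{R'}(x)$ pointwise on $M$. Since both functions are nonnegative, integrating gives $\int_\Omega 2\zeta_R \le \int_\Omega 2\zeta_{R'}$ for every $\Omega \in \mathcal C(M)$, so
\[
A^{2\zeta_{R'}}(\Omega) \le A^{2\zeta_R}(\Omega).
\]

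Applying this inequality with $\Omega = \Omega_1$ shows $A^{2\zeta_{R'}}(\Omega_1) \le A^{2\zeta_R}(\Omega_1) < 0$, so $\{\Omega_t\}$ also belongs to $\mathcal P_{R'}$. Taking suprema, I would then conclude
\[
\omega(R') \le \sup_{t \in [0,1]} A^{2\zeta_{R'}}(\Omega_t) \le \sup_{t \in [0,1]} A^{2\zeta_R}(\Omega_t) \le \omega(R) + \eps.
\]
Letting $\eps \to 0$ yields $\omega(R') \le \omega(R)$, which is the desired monotonicity.

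There is no real obstacle here; the argument is a direct transcription of the earlier Euclidean case, and the only nontrivial ingredient is the pointwise comparison $\zeta_R \le \zeta_{R'}$, which is immediate from the monotonicity of $\zeta$.
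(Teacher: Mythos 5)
Your proof is correct and follows the same route as the paper: the paper's own proof is simply the observation that $\zeta_{R'} \ge \zeta_R$ for $R' > R$, and your write-up is the natural expansion of that one line (mirroring the Euclidean case in Section~\ref{pmc}), correctly checking both that the path remains a mountain pass path for $A^{2\zeta_{R'}}$ and that its maximum value does not increase. No gaps.
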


\begin{proof}
This follows at once from the fact that $\zeta_{R_2} \ge \zeta_{R_1}$ whenever $R_2 > R_1$. 
\end{proof}

\begin{prop}
The approximate min-max numbers satisfy $\omega(R) \to \omega$ as $R \to \infty$. 
\end{prop}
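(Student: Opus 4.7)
The plan is to mimic Proposition \ref{radii} essentially verbatim, using the fact that $2\zeta_R \le 2$ pointwise in place of the relation $h_R \le h$ from the Euclidean case.

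First I would establish the easy inequality $\omega(R) \ge \omega$ for all $R$. Since $\zeta_R \le 1$ on all of $M$, we have $A^{2\zeta_R}(\Omega) \ge A^2(\Omega)$ for every $\Omega \in \mathcal C(M)$. Consequently, any path $\{\Omega_t\} \in \mathcal P_R$ is automatically in $\mathcal P$ (because $A^2(\Omega_1) \le A^{2\zeta_R}(\Omega_1) < 0$), and $\sup_t A^2(\Omega_t) \le \sup_t A^{2\zeta_R}(\Omega_t)$. Taking infima gives $\omega \le \omega(R)$.

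Combined with monotonicity (the preceding proposition), it then suffices to produce, for each $\eps > 0$, some $R$ with $\omega(R) \le \omega + C\eps$. I would fix $\eps > 0$ and, by definition of $\omega$, choose a $2$-mountain pass path $\{\Omega_t\}_{t \in [0,1]} \in \mathcal P$ with
\[
\sup_{t \in [0,1]} A^2(\Omega_t) \le \omega + \eps.
\]
Exactly as in Proposition \ref{radii}, using that $\vol(\Omega_t \setminus B_R)$ is continuous in $(t,R) \in [0,1] \times (0,\infty)$, that $[0,1]$ is compact, and that $\vol(\Omega_t \setminus B_R) \to 0$ as $R \to \infty$ for each fixed $t$, I can pick $R$ so large that
\[
\sup_{t \in [0,1]} \vol(\Omega_t \setminus B_R) < \eps.
\]
Enlarging $R$ further if necessary, I may also assume $\Omega_1 \subset B_R$, so that $\zeta_R \equiv 1$ on $\Omega_1$ and hence $A^{2\zeta_R}(\Omega_1) = A^2(\Omega_1) < 0$. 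This ensures $\{\Omega_t\} \in \mathcal P_R$.

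Finally I would estimate, for each $t$,
\[
A^{2\zeta_R}(\Omega_t) = A^2(\Omega_t) + \int_{\Omega_t} 2(1 - \zeta_R) \le A^2(\Omega_t) + 2\vol(\Omega_t \setminus B_R) \le \omega + 3\eps,
\]
where the second inequality uses $0 \le 1 - \zeta_R \le 1$ with $1 - \zeta_R$ supported outside $B_R$. Taking the supremum over $t$ gives $\omega(R) \le \omega + 3\eps$, and since $\eps$ was arbitrary the proposition follows. There is no real obstacle here beyond confirming that the path one selects from $\mathcal P$ genuinely lies in $\mathcal P_R$ for large $R$, which is handled by the compact support of $\Omega_1$.
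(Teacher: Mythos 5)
Your proof is correct and is exactly the argument the paper intends: its own proof simply says ``Identical to Proposition \ref{radii},'' and you have carried out that identical argument, correctly adapted to the constant prescription $c=2$ (so the error term $2n\vol(\Omega_t\setminus B_R)$ becomes $2\vol(\Omega_t\setminus B_R)$). Both directions, including the verification that the chosen path lies in $\mathcal P_R$ for large $R$, match the paper's reasoning.
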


\begin{proof}
Identical to Proposition \ref{radii}.
\end{proof}

\begin{prop}
\label{goodseq}
There is a sequence $R_j\to \infty$ such that $R \mapsto \omega(R)$ is differentiable at each $R_j$ and $0 \ge \omega'(R_j) \ge -2/R_j$. 
\end{prop}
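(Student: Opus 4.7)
The statement is the exact analog of Proposition \ref{dbound} in the Euclidean setting, and the plan is to copy that argument essentially verbatim, since everything used there is now available in the asymptotically flat setting: $\omega(R)$ is monotone decreasing (hence differentiable a.e.\ with an integrable derivative), and $\omega(R) \to \omega$ as $R \to \infty$.

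First I would use the monotonicity of $R \mapsto \omega(R)$ to conclude that $\omega$ is differentiable almost everywhere in $R$, and that for any $0 < a < b$ one has
\[
\int_a^b \omega'(R)\,dR \;\ge\; \omega(b) - \omega(a) \;\ge\; \omega - \omega(a),
\]
the last inequality coming from the fact that $\omega(b) \ge \omega$ for all $b$. Since $\omega(a) \to \omega$ as $a \to \infty$, I can choose $a$ large enough that $\omega(a) - \omega \le 1$, so that
\[
0 \;\ge\; \int_a^b \omega'(R)\,dR \;\ge\; -1 \qquad \text{for all } b > a.
\]

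Next I would apply this estimate on the dyadic intervals $[n, 2n]$ for each integer $n \ge a$. If it were the case that $\omega'(R) < -2/R$ at every point of differentiability in $[n, 2n]$, then
\[
\int_n^{2n} \omega'(R)\,dR \;<\; -\int_n^{2n} \frac{2}{R}\,dR \;=\; -2\ln 2 \;<\; -1,
\]
contradicting the previous bound. Therefore for each $n \ge a$ there exists $R_n \in [n, 2n]$ at which $\omega$ is differentiable and $0 \ge \omega'(R_n) \ge -2/R_n$. Taking $R_j = R_{n_j}$ for $n_j \to \infty$ produces the required sequence.

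There is no real obstacle here — the argument is a routine use of monotonicity plus a pigeonhole on the average value of the derivative — and the only ingredients it depends on (monotonicity and the limit $\omega(R) \to \omega$) were established in the two preceding propositions. In particular, no new feature of the asymptotically flat setting is used beyond what has already been set up.
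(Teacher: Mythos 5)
Your proof is correct and is exactly the paper's argument: the paper proves this proposition by citing its Euclidean counterpart (Proposition \ref{dbound}), whose proof is precisely the monotonicity-plus-pigeonhole argument on dyadic intervals $[n,2n]$ that you give, using only that $\omega(R)$ is decreasing and that $\omega(R)\to\omega$. No changes needed.
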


\begin{proof}
Identical to Proposition \ref{dbound}. 
\end{proof} 

\begin{defn}
Given $R > 0$, $C> 0$, $\eta > 0$, and $\theta > 0$ define $\mathcal N_{R,C,\eta,\theta}$ to be the set of $R$-mountain pass paths $\{\Omega_t\}_{t\in[0,1]}$ such that 
\begin{itemize}
\item[(i)] $\Omega_t$ is supported in $B_{R+1-\eps_R}$ for all $t$,
\item[(ii)] $A^{2\zeta_R}(\Omega_t) \le \omega(R) + \theta$ for all $t$, and
\item[(iii)] $\int_{\Omega_t} \vert \grad \zeta_R\vert \le C$ for all $t$ such that $A^{2\zeta_R}(\Omega_t) \ge \omega(R)-\eta$. 
\end{itemize}
\end{defn}

\begin{prop}
\label{niceclass}
Suppose that $0 \ge \omega'(R) \ge -c$ where $c > 0$ is a constant.  Then there is an $\eta > 0$ such that the class $\mathcal N_{R,5c,\eta,\theta}$ is non-empty for all sufficiently small $\theta$. 
\end{prop}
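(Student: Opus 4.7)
My plan is to follow the argument of Proposition \ref{nices} almost verbatim, with the prescription function $h_R$ replaced by $2\zeta_R$ and the Euclidean gradient identity $|\grad_x \zeta_R| = |\zeta'(r-R)|$ replaced by the lapse estimate from Proposition \ref{gr}. The setup: for each positive integer $k$, set $R_k := R - 1/k$. The derivative bound $0 \ge \omega'(R) \ge -c$ combined with the monotonicity of $\omega$ gives $0 \le \omega(R_k) - \omega(R) \le 2c/k$ for all sufficiently large $k$. Using the restriction result proved just above for the AF approximate widths, for every $\theta > 0$ I can then select a path $\{\Omega_t\} \in \mathcal Q_{R_k}$ with $\sup_t A^{2\zeta_{R_k}}(\Omega_t) \le \omega(R_k) + \theta$.

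The key pointwise comparison is that $\zeta_R \ge \zeta_{R_k}$ (since $\zeta$ is decreasing and $r - R \le r - R_k$), so
\[
A^{2\zeta_{R_k}}(\Omega_t) = A^{2\zeta_R}(\Omega_t) + 2\int_{\Omega_t}(\zeta_R - \zeta_{R_k})
\]
with non-negative integrand. From this I read off both that $\{\Omega_t\} \in \mathcal P_R$ and that $A^{2\zeta_R}(\Omega_t) \le \omega(R) + 2c/k + \theta$, which gives condition (ii). Condition (i) will be automatic because $\eps_R$ is decreasing in $R$, hence $B_{R_k+1-\eps_{R_k}} \subset B_{R+1-\eps_R}$.

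For (iii), I will set $\eta := c/k$ and restrict to $\theta \le c/k$. At any $t$ where $A^{2\zeta_R}(\Omega_t) \ge \omega(R) - \eta$, the identity above forces $\int_{\Omega_t}(\zeta_R - \zeta_{R_k}) \le 2c/k$. Since $\zeta$ is smooth with compactly supported derivative, for $k$ chosen sufficiently large at the outset the difference quotient $k(\zeta_R - \zeta_{R_k})$ will dominate $\tfrac{4}{5}|\zeta'(r-R)|$ uniformly in $x$, yielding $\int_{\Omega_t}|\zeta'(r-R)| \le \tfrac{5c}{2}$. Finally, Proposition \ref{gr} gives $|\grad \zeta_R| \le \tfrac{5}{3}|\zeta'(r-R)|$ for $r$ large, so $\int_{\Omega_t}|\grad \zeta_R| \le \tfrac{25c}{6} < 5c$, establishing (iii).

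I expect no substantive obstacle beyond the parameter juggling already present in Proposition \ref{nices}: I must fix $k$ once and for all (large enough for both the derivative difference quotient and the uniform derivative estimate for $\zeta$), then $\eta := c/k$ is determined, and finally $\theta$ is restricted to lie below $c/k$. The only genuinely new geometric ingredient relative to the Euclidean case is the lapse bound $\phi \ge 3/5$ from Proposition \ref{gr}, which is what converts the $R$-derivative of $\zeta_R$ into the metric gradient.
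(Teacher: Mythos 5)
Your proof is correct and follows the paper's intended argument essentially verbatim: the paper's own proof consists of the single observation relating $\frac{d}{dR}\zeta_R = \vert\zeta'(r-R)\vert$ to $\vert\grad\zeta_R\vert$ via the lapse bound of Proposition \ref{gr}, followed by the statement that the rest is identical to Proposition \ref{nices}. Your parameter bookkeeping ($R_k = R - 1/k$, $\eta = c/k$, $\theta \le c/k$) and the final chain $\int_{\Omega_t}\vert\grad\zeta_R\vert \le \frac{5}{3}\int_{\Omega_t}\vert\zeta'(r-R)\vert \le \frac{25c}{6} < 5c$ reproduce the paper's computation, and you apply the lapse inequality in the direction actually needed.
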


\begin{proof}
Observe that
\[
\frac{d}{dR}\zeta_R = \vert \zeta'(r-R)\vert \le \frac{5}{3} \vert \grad \zeta_R\vert
\]  
when $R$ is large. 
The rest of the proof is identical to Proposition \ref{nices}. 
\end{proof}

Finally we can perform the min-max argument. 

\begin{prop}
\label{minmax}
There is a sequence $R_j \to \infty$ with the following properties.  For each $j$ there is a closed, almost-embedded $2\zeta_{R_j}$-$\operatorname{PMC}$ $\Sigma_j = \bd \Omega_j$ contained in $B_{R+1}$.  This surface satisfies 
\[
A^{2\zeta_{R_j}}(\Omega_j) = \omega(R_j)
\]
and it has index at most one for $A^{2\zeta_{R_j}}$. Moreover, 
\[
\int_{\Omega_j} \vert \grad \zeta_{R_j} \vert \le \frac{10}{R_j}. 
\]
\end{prop}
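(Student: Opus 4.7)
The plan is to adapt the proof of Proposition \ref{min-max} from Section \ref{pmc} almost verbatim, with only cosmetic replacements appropriate to the asymptotically flat setting. Throughout, fix an integer $j$ and write $R = R_j$ for brevity. By Proposition \ref{goodseq} we have $0 \ge \omega'(R) \ge -2/R$, so Proposition \ref{niceclass} produces an $\eta > 0$ and a sequence of mountain pass paths $\{\Omega^k_t\}_{t\in[0,1]}$ lying in $\mathcal N_{R,10/R,\eta,\theta_k}$ for some sequence $\theta_k\to 0$. In particular each path is supported in the bounded region $B_{R+1-\eps_R}$, and $\sup_t A^{2\zeta_R}(\Omega^k_t) \to \omega(R)$.

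Next I will tighten these paths as in \cite{Z}. Because every $\Omega^k_t$ is supported in $B_{R+1-\eps_R}$, all of the action happens inside a compact subset of $M$, so standard constructions give an $\mathbf F$-continuous tightening map $\psi\f \mathcal C \times [0,1]\to \mathcal C(M)$ satisfying properties (i)--(v) listed in the proof of Proposition \ref{min-max}, where $\mathcal C$ is the analogous set of Caccioppoli subsets of $B_{R+1-\eps_R}$ with area bounded by $2\omega(R)+2\vol(B_{R+1-\eps_R})$ and $A$ is the set of varifolds of bounded first variation inside $\cl B_{R+1-\eps_R}$. Using the cutoff functions $\tau_k$ built from the nested sets $I_k \subset J_k$ defined exactly as before, I set $U^k_t = \psi(\Omega^k_t,\tau_k(t))$ and obtain a pulled-tight sequence of mountain pass paths with the same limiting width $\omega(R)$.

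From here, for any critical sequence $\{U^k_{t_k}\}$, property (iv) of $\psi$ forces $\mathbf F(\vert \bd U^k_{t_k}\vert ,A)\to 0$, and property (v) gives $\vol(U^k_{t_k}\Delta\Omega^k_{t_k})\to 0$. Combining this with condition (iii) of the nice class $\mathcal N_{R,10/R,\eta,\theta_k}$ yields the transition-region estimate
\[
\limsup_{k\to\infty} \int_{U^k_{t_k}} \vert \grad \zeta_R\vert \le \frac{10}{R}.
\]
The ambient function $2\zeta_R$ is admissible in the sense of \cite{ZZ1} on $B_{R+1}$ (it is smooth and its zero set contains no closed embedded minimal surface since that zero set is disjoint from $B_{R+1-\eps_R}$), so Zhou and Zhu's almost-minimizing, regularity, and index arguments apply to a suitable critical sequence, producing a smooth almost-embedded $2\zeta_R$-PMC $\Sigma_j = \bd\Omega_j$ of index at most one with $A^{2\zeta_R}(\Omega_j)=\omega(R)$ and $\int_{\Omega_j}\vert \grad \zeta_R\vert \le 10/R$.

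The main obstacle is that Zhou's theorem is not stated for non-compact ambients, nor for prescription functions that vanish on an open set. Both issues are resolved by our setup: the barrier Proposition \ref{intersect} and Proposition \ref{support} confine everything inside $B_{R+1-\eps_R}$, where $2\zeta_R$ is bounded away from zero on $B_R$ and the zero set $\{2\zeta_R = 0\}$ contains no closed minimal surface, so Zhou's arguments go through after the minor modifications already exploited in Proposition \ref{min-max}. Once this confinement is in place, the rest of the argument is a direct transcription of the Euclidean proof, with the barrier vector field $x/\|x\|$ replaced by the approximate calibration $N$ and the gradient bound from Proposition \ref{gr} used to translate $\frac{d}{dR}\zeta_R$ into $\vert \grad \zeta_R\vert$ inside the nice class.
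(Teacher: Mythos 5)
Your proposal is correct and follows essentially the same route as the paper, which itself simply invokes Propositions \ref{goodseq} and \ref{niceclass} and then repeats the modifications to Zhou's min-max theorem already described in the proof of Proposition \ref{min-max}; you have merely written out those modifications explicitly. One small point: the absence of closed embedded minimal surfaces in $\{2\zeta_R=0\}$ does not follow from that set being disjoint from $B_{R+1-\eps_R}$, but rather from the fact that the exterior region is foliated by the spheres $S_r$ of positive mean curvature (equivalently, $\div_g(N)\ge 1/r$ there), which rules out closed minimal surfaces by the maximum principle.
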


\begin{proof}
Choose $R_j$ as in Proposition \ref{goodseq}.  Then by Proposition \ref{niceclass}, there is an $\eta > 0$ and a sequence $\theta_k \to 0$ such that the set $\mathcal N_{R_j,10/R_j,\eta,\theta_k}$ is non-empty for all $k$.  For each $k$, let $\{\Omega_t^k\}_{t\in [0,1]}$ be an element in $\mathcal N_{R_j,10/R_j,\eta,\theta_k}$. Applying Zhou and Zhu's PMC min-max theory to this min-max sequence (with the same modifications described in Proposition \ref{min-max}) yields the desired surfaces. 
\end{proof}

\subsection{Finding the CMC}

The goal of this section is to prove that the surfaces $\Sigma_j$ constructed in the previous section are contained in the region $\{\zeta_{R_j} = 1\}$ for all sufficiently large $j$, and therefore give rise to an almost-embedded 2-CMC in $M$. The first step is to prove that the surfaces $\Sigma_j$ satisfy uniform area bounds.  

The next proposition constructs a vector field on $M$ which approximates the coordinate vector field on $\R^3$ near infinity. 

\begin{prop}
\label{vect}
There is a vector field $X$ on $M$ and a compact region $B$ such that 
\begin{itemize}
\item[(i)] $\div_{P} X(x) \le \frac 7 3$ for all points $x\in M$ and all planes $P\in T_xM$, 
\item[(ii)] $\div(X) \ge \frac 8 3$ on $M\setminus B$, and
\item[(iii)] $\vert X\vert \le 2r$ on $M\setminus B$. 
\end{itemize}
\end{prop}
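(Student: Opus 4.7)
My approach is to take $X$ to be a smoothly cut off version of the Euclidean coordinate vector field on the asymptotically flat end, extended by zero on the compact region $K$. The Euclidean position field $X_0 = x^i \partial_i$ satisfies $|X_0| = r$, $\div X_0 = 3$, and $\div_P X_0 = 2$ for every $2$-plane $P$, and these values sit strictly inside the required inequalities $|X| \le 2r$, $\div X \ge 8/3$, $\div_P X \le 7/3$. So asymptotic flatness should yield the corresponding inequalities on the far end of $M$ with a safety margin, and only the transition region requires care.

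First I would carry out the asymptotic analysis. Working in the coordinates on $E \cong \R^3 \setminus B_R$ from Definition \ref{afd}, the identity $\div_g X_0 = \frac{1}{\sqrt{\det g}}\,\partial_i(\sqrt{\det g}\, x^i)$ combined with $\partial_i \log \sqrt{\det g} = O(r^{-2})$ yields $\div_g X_0 = 3 + O(r^{-1})$; similarly $|X_0|_g = r(1 + O(r^{-1}))$. For the tangential divergence, expand $\nabla_v X_0 = v + \Gamma^k_{ij} x^j v^i \partial_k$; since $\Gamma^k_{ij} = O(r^{-2})$ as a consequence of $h_{ij,k} = O(r^{-2})$, the correction term is $O(r^{-1})$, and hence $\div_P X_0 = 2 + O(r^{-1})$ uniformly in the choice of plane $P$ at the given point. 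I would then fix $R_1$ large enough that on $\{r \ge R_1\}$ one has $|X_0|_g \le 2r$, $\div_g X_0 \ge 8/3$, and $\div_P X_0 \le 2 + 1/12$ for every $P$.

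Next I would patch $X_0$ to zero using a logarithmic radial cutoff with small gradient. Fix a smooth nondecreasing $\eta \colon \R \to [0,1]$ that is $0$ on $(-\infty, 0]$, $1$ on $[1, \infty)$, and satisfies $|\eta'| \le 2$. Define $\chi(x) = \eta\bigl(\tfrac{1}{24}\log(r(x)/R_1)\bigr)$ on $E \cap \{r \ge R_1\}$ and $\chi \equiv 0$ elsewhere, so that $\chi$ is globally smooth on $M$, $|r\chi'(r)| \le 1/12$, and $\chi \equiv 1$ for $r \ge R_1 e^{24}$. Set $X = \chi X_0$ on $E$ and $X = 0$ on $K$, and let $B$ be any compact set containing $K \cup \{r \le R_1 e^{24}\}$. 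Then (ii) and (iii) hold on $M \setminus B$ because $X = X_0$ there. For (i), condition (i) is trivial on $K$ and was verified on $\{r \ge R_1 e^{24}\}$ by choice of $R_1$; on the transition annulus $\{R_1 \le r \le R_1 e^{24}\}$, the product rule gives
\[
\div_P(\chi X_0) = \chi \div_P X_0 + \la (\grad \chi)_P, (X_0)_P \ra \le \bigl(2 + \tfrac{1}{12}\bigr) + |r\chi'(r)|\,(1 + O(r^{-1})) < \tfrac{7}{3},
\]
where the second term was bounded using $|\grad \chi|_g = |\chi'(r)|(1+O(r^{-1}))$ and $|X_0|_g = r(1+O(r^{-1}))$, and $R_1$ is taken large enough for the remaining metric error to fit into the slack $7/3 - (2 + 1/12) - 1/12 > 0$.

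The main technical point is the uniform-in-$P$ expansion $\div_P X_0 = 2 + O(r^{-1})$, which reduces to the Christoffel symbol decay $\Gamma^k_{ij} = O(r^{-2})$ and ultimately to the hypothesis $h_{ij,k} = O(r^{-2})$ in Definition \ref{afd}; this requires a little care since $P$ varies, but the Christoffel symbol bound is independent of $P$ and the sum has only two terms, so there is no loss of uniformity. Everything else is a routine matter of choosing constants with sufficient slack.
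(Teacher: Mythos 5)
Your proposal is correct and follows essentially the same route as the paper: both take the Euclidean position field $Y=x^i\partial_i$ on the end, use asymptotic flatness to get $g(\nabla_w Y,w)=1+o(1)$ uniformly in the unit vector $w$ (hence the plane-divergence and full-divergence bounds with slack), and multiply by a radial cutoff whose gradient is $o(1/r)$ or $O(1/r)$ with a small constant so that the extra term $\langle\grad\chi,Y\rangle$ fits into the slack. The only differences are cosmetic (logarithmic cutoff versus a cutoff with $|\grad\eta|\le \frac{1}{r\log r}$, and an explicit Christoffel-symbol computation where the paper simply invokes the decay of $h_{ij}$).
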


\begin{proof}
Let $Y = x^i \frac{\bd}{\bd x^i}$ be the Euclidean coordinate vector field.  With respect to the Euclidean metric, every Euclidean unit vector $v$ satisfies 
$
g_{\text{euc}}(\del^{\text{euc}}_v Y, v) = 1. 
$
Since $g$ is asymptotically flat, it follows that 
\begin{equation}
\label{e9}
\vert g(\del_w Y, w) - 1\vert \le \frac{1}{100}
\end{equation}
whenever $w\in T_xM$ satisfies $g(w,w) = 1$ and $\|x\|$ is sufficiently large.  Moreover, since $\vert Y\vert_{g_{\text{euc}}} = r$, it follows that 
\begin{equation}
\label{e7}
\vert Y\vert_g \le 2r 
\end{equation}
on the complement of a compact region.  Let $A$ be a compact region so that (\ref{e9}) and (\ref{e7}) hold on $M\setminus A$. 

Select a radially increasing cutoff function $\eta$ which equals zero on $A$, and equals one on the complement of a compact set $B\supset A$.  Assuming $B$ is chosen sufficiently large, it is also possible to ensure that 
\begin{equation}
\label{e8} 
\vert \grad \eta\vert \le \frac{1}{r \log r}
\end{equation}
everywhere on $M$. 
Define $X = \eta Y$.  It is easy to see that $X$ satisfies (iii).  Moreover, for any vector $w$ one has 
\[
g(\del_w X,w) = g(\grad \eta, w) g(Y,w) + \eta g(\del_w Y,w). 
\]
Combining this  with (\ref{e9}), (\ref{e7}), and (\ref{e8}), it follows that $X$ satisfies (i) and (ii). 
\end{proof} 

Computing the first variation of $A^{2\zeta_j}(\Sigma_j)$ along $X$ lets one bound the area of $\Sigma_j$ by $A^{2\zeta_j}(\Sigma_j)$.  This gives the uniform area bound. 

\begin{prop}
Let $\Sigma_j = \bd \Omega_j$ be the surfaces constructed in Proposition \ref{minmax}. There is a constant $C$ independent of $j$ such that $\area(\Sigma_j) \le C$ for all $j$.  
\end{prop}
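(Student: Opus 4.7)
The plan is to test the first variation of $A^{2\zeta_{R_j}}$ at $\Omega_j$ against the vector field $X$ from Proposition \ref{vect}; this $X$ plays the role of the homothetic/radial vector field $x\mapsto x$ in the Euclidean argument. Since $\Sigma_j=\bd\Omega_j$ is a smooth, almost-embedded $2\zeta_{R_j}$-PMC, the first variation vanishes, so after applying the divergence theorem to the weighted-volume term we obtain
\[
\int_{\Sigma_j}\div_{\Sigma_j}X \;=\; 2\int_{\Omega_j}\zeta_{R_j}\,\div X \;+\; 2\int_{\Omega_j}X\cdot\grad\zeta_{R_j}.
\]

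Next I would estimate each piece using the three properties of $X$. Property (i) immediately gives $\int_{\Sigma_j}\div_{\Sigma_j}X\le\tfrac{7}{3}\area(\Sigma_j)$. Since $\grad\zeta_{R_j}$ is supported in the annulus $\{R_j\le r\le R_j+1\}$, property (iii) combined with the bound $\int_{\Omega_j}\vert\grad\zeta_{R_j}\vert\le 10/R_j$ from Proposition \ref{minmax} shows
\[
\left\vert\int_{\Omega_j}X\cdot\grad\zeta_{R_j}\right\vert \;\le\; 2(R_j+1)\cdot\frac{10}{R_j},
\]
which is bounded independently of $j$. For the middle term, decompose $\Omega_j=(\Omega_j\cap B)\cup(\Omega_j\setminus B)$: the contribution from the compact set $B$ is a fixed constant, while on $\Omega_j\setminus B$ property (ii) yields $\zeta_{R_j}\div X\ge\tfrac{8}{3}\zeta_{R_j}\ge 0$.

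Assembling these estimates produces an inequality of the schematic form
\[
\frac{16}{3}\int_{\Omega_j\setminus B}\zeta_{R_j} \;\le\; \frac{7}{3}\area(\Sigma_j) + C,
\]
with $C$ independent of $j$. To close the argument, I would use the identity $\area(\Sigma_j)=A^{2\zeta_{R_j}}(\Omega_j)+2\int_{\Omega_j}\zeta_{R_j}=\omega(R_j)+2\int_{\Omega_j}\zeta_{R_j}$, the monotonicity bound $\omega(R_j)\le\omega(R_1)$, and the trivial estimate $\int_{\Omega_j\cap B}\zeta_{R_j}\le\vol(B)$ to rewrite the previous display as $\area(\Sigma_j)\le\tfrac{7}{8}\area(\Sigma_j)+C'$, which rearranges to $\area(\Sigma_j)\le 8C'$.

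The decisive point is that the constants $7/3$ and $8/3$ in Proposition \ref{vect} were engineered precisely so that $2\cdot(7/3)<16/3$, i.e.\ so that the absorbing coefficient $7/8$ is strictly less than $1$; without this strict gap there is no way to solve for $\area(\Sigma_j)$. This strict inequality, together with the small-gradient control of $\zeta_{R_j}$ supplied by Proposition \ref{minmax}, is the only non-routine ingredient, and I expect it to be the main place where care is needed. Beyond that, everything amounts to integration by parts and bookkeeping of the compact versus non-compact contributions.
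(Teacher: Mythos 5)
Your proposal is correct and follows essentially the same route as the paper: test the vanishing first variation of $A^{2\zeta_{R_j}}$ against the vector field $X$ of Proposition \ref{vect}, use properties (i)--(iii) together with the bound $\int_{\Omega_j}\vert\grad\zeta_{R_j}\vert\le 10/R_j$, and close via the uniform bound on $A^{2\zeta_{R_j}}(\Omega_j)=\omega(R_j)$. The paper organizes the algebra slightly differently (it bounds $\tfrac{8}{3}A^{2\zeta_j}(\Omega_j)$ below by $\tfrac{1}{3}\area(\Sigma_j)$ minus controlled terms rather than absorbing a $\tfrac{7}{8}\area(\Sigma_j)$ term), but the two computations are identical in substance, and your observation that the gap between $7/3$ and $8/3$ is the decisive point is exactly right.
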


\begin{proof}
For notational convenience let $\zeta_j = \zeta_{R_j}$. Let $X$ be the vector field from Proposition \ref{vect}. Then 
\begin{align*}
0 = \delta A^{2\zeta_j}\eval_{\Omega_j}(X) &= \int_{\Sigma_j} \div_{T_p\Sigma_j}(X) - 2\int_{\Sigma_j} \zeta_j X \cdot \nu \\
&= \int_{\Sigma_j} \div_{T_p\Sigma_j}(X) - 2\int_{\Omega_j} \div(\zeta_j X).
\end{align*}
By the product rule
\[
\div(\zeta_j X) = \la \grad \zeta_j, X\ra + \zeta_j \div(X).
\]
Therefore 
\begin{align*}
\frac 8 3 A^{2\zeta_j}(\Omega_j) &= \frac 8 3 \area(\Sigma_j) - \frac{16} 3 \int_{\Omega_j}\zeta_j\\
&\qquad\quad - \int_{\Sigma_j} \div_{T_p\Sigma_j}(X) + 2\int_{\Omega_j} \la \grad \zeta_j,X\ra + 2\int_{\Omega_j}\zeta_j \div(X)\\
&\ge \frac 1 3 \area(\Sigma_j) - {4}R_j\int_{\Omega_j} \vert \grad \zeta_j\vert + 2 \int_{\Omega_j} \zeta_j \left(\div X - \frac 8 3\right)\\
&\ge \frac 1 3 \area(\Sigma_j) - 4R_j\int_{\Omega_j} \vert \grad \zeta_j\vert - C\vol(B).
\end{align*}
Since both $A^{2\zeta_j}(\Omega_j)$ and $4 R_j\int_{\Omega_j}\vert \grad \zeta_j\vert$ are uniformly bounded, the proposition follows. 
\end{proof}

The area and index bounds allow one to study the pointed convergence of the surfaces $\Sigma_j$ as $j\to \infty$.

\begin{prop}
The diameter of every connected component of $\Sigma_j$ is uniformly bounded. 
\end{prop}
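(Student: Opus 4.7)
The plan is to combine the pointwise mean-curvature bound inherent in the PMC equation with the uniform area bound from the preceding proposition, in the spirit of the Topping-type argument used in the Euclidean case (Corollary \ref{diambound}). The key observation is that, since $\Sigma_j$ is a $2\zeta_{R_j}$-PMC, its mean curvature satisfies $|H_{\Sigma_j}| = 2\zeta_{R_j} \le 2$ pointwise, independent of $j$. Together with the uniform area bound $\area(\Sigma_j) \le C$ from the previous proposition, this gives a uniformly bounded total mean curvature on each component.

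Next, I would extract a uniform lower bound on the area density along $\Sigma_j$ via the monotonicity formula. The asymptotic flatness hypothesis, together with the smoothness of $g$ on the compact core $K$, implies that $(M,g)$ has uniformly bounded sectional curvature and injectivity radius bounded below by some $i_0 > 0$: in the end $E$ the curvature decays to zero and the injectivity radius grows, while on $K$ both are controlled by compactness. The standard monotonicity formula for surfaces with mean curvature bounded by $2$ in a manifold with bounded geometry then furnishes constants $r_0 > 0$ and $c_0 > 0$, depending only on the geometry of $(M,g)$, such that
\[
\area(\Sigma_j \cap B^g_{r_0}(p)) \ge c_0
\]
for every $j$ and every $p \in \Sigma_j$.

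Finally, I would run a standard packing argument. Let $\Gamma_j$ be a connected component of $\Sigma_j$ and let $D_j$ denote its diameter in the ambient metric $g$. Since $\Gamma_j$ is connected, a maximal $2r_0$-separated subset $\{p_1,\ldots,p_{N_j}\} \subset \Gamma_j$ must have cardinality at least $\lfloor D_j/(4 r_0)\rfloor$. The geodesic balls $B^g_{r_0}(p_i)$ are pairwise disjoint, so summing the density bound yields
\[
C \ge \area(\Gamma_j) \ge N_j c_0 \ge \frac{c_0}{4 r_0} D_j - c_0,
\]
and rearranging gives $D_j \le C'$ for a constant $C'$ independent of $j$ and of the chosen component.

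The main obstacle is checking that the monotonicity formula can be applied with constants uniform across $M$, rather than deteriorating in the transition region or at infinity. This reduces to the bounded-geometry statement above and is a routine consequence of the decay hypotheses in Definition \ref{afd}. An alternative route would be to adapt Topping's theorem directly: outside a sufficiently large ball in $E$ the metric is $C^1$-close to Euclidean, so Topping's Euclidean inequality transfers with a bounded multiplicative correction, while any component trapped in the compact core has bounded diameter automatically.
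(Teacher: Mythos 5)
Your proof is correct and follows essentially the same strategy as the paper: a lower area-density bound for surfaces of bounded mean curvature (the paper cites Simon's Lemma 1.1 on the Euclidean end, you invoke the monotonicity formula in the bounded geometry of $(M,g)$), combined with the uniform area bound and a packing argument. The only difference is organizational --- the paper splits off the compact core and works in Euclidean coordinates on the end, while you work intrinsically throughout --- and you even note the paper's route as your alternative.
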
 

\begin{proof}
Fix a very large number $R > 0$.  Since $g$ is asymptotically flat, the Euclidean area of $\Sigma_j \cap (M\setminus B_R)$ is uniformly bounded.  Likewise, the Euclidean mean curvature of $\Sigma_j$ is also uniformly bounded on $M\setminus B_R$.  Since the boundary of $\Sigma_j \cap (M\setminus B_R)$ is contained in $S_R$, a result of Simon implies that the diameter of any component of $\Sigma_j$ is uniformly bounded.  Namely, given a surface $\Sigma$ immersed in $\R^3$ with bounded mean curvature, the proof of Lemma 1.1 in \cite{SimonDiam} gives constants $r_0 > 0$ and $C > 0$ such that 
$
\area(\Sigma \cap B_{r_0}(x)) \ge C
$
whenever $x\in \Sigma$ and $\bd \Sigma\cap B_{r_0}(x) = \emptyset$.  Here $B_{r_0}(x)$ denotes the Euclidean ball of radius $r_0$ centered at $x$, and the constant $C$ depends only on the upper bound for the mean curvature.  Applying this with $\Sigma = \Sigma_j \cap (M\setminus B_R)$ proves the proposition. 
\end{proof}

The following proposition completes the proof of the Theorem \ref{main2}. 

\begin{prop}
Some component of $\Sigma_j$ is contained in $B_{R_j}$ when $j$ is sufficiently large. 
\end{prop}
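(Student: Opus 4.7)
The plan is to mirror the proof of Proposition \ref{noinf}, arguing by contradiction and extracting a limiting configuration of unit spheres at infinity whose energy violates hypothesis $(\mathrm{H})$. The main new wrinkle compared to the Euclidean case is that the ambient metric is no longer flat, so the translation step must be done in the asymptotically flat coordinates on $E$ and use the fact that translates of $g$ converge to the Euclidean metric.

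Suppose for contradiction that, after passing to a subsequence, every connected component of $\Omega_j$ intersects the transition annulus $B_{R_j+1}\setminus B_{R_j}$. Fix a component $\Theta_j\subset\Omega_j$, write $\Gamma_j = \bd\Theta_j$, and choose a point $p_j\in\Gamma_j$ that is farthest from the origin in the Euclidean coordinates on $E$; by assumption $\|p_j\|\to\infty$. In those coordinates perform the Euclidean translations
\[
\Gamma_j' = \Gamma_j - p_j,\qquad \Theta_j' = \Theta_j - p_j,\qquad g_j'(x) = g(x+p_j),\qquad h_j'(x) = 2\zeta_{R_j}(x+p_j).
\]
By the uniform diameter bound from the previous proposition, all $\Gamma_j'$ lie in a fixed Euclidean ball $B$ centered at $0$. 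Since $g$ is asymptotically flat and $\|p_j\|\to\infty$, the metrics $g_j'$ converge smoothly to the Euclidean metric on compact subsets of $\R^3$. Moreover, passing to a further subsequence, we may assume that the relative position of $p_j$ with respect to the transition shell $\{R_j\le r\le R_j+1\}$ stabilizes: there exist a unit vector $\nu\in\R^3$ and a constant $\tau\in[-1,0]$ such that $h_j'$ converges smoothly on $B$ to
\[
h_\infty(x) = 2\,\zeta(x\cdot\nu + \tau).
\]

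Now apply the compactness theorem for almost embedded prescribed mean curvature surfaces with uniformly bounded area and index (\cite{Z}, Theorem 2.6) in the smoothly converging ambient data $(g_j',h_j')\to(g_{\mathrm{euc}},h_\infty)$. Extracting a further subsequence, $\Gamma_j'$ converges, smoothly and with multiplicity one away from at most one point, to an almost embedded $h_\infty$-PMC $\Gamma_\infty = \bd\Theta_\infty\subset B$ with respect to the Euclidean metric, and
\[
A^{2\zeta_{R_j}}(\Theta_j) = A^{h_j'}_{g_j'}(\Theta_j') \longrightarrow A^{h_\infty}_{g_{\mathrm{euc}}}(\Theta_\infty).
\]
The verbatim analog of Lemma \ref{classi} (with $n=2$, i.e.\ $c=2$) applies: differentiating $A^{h_\infty}$ along the Euclidean translations $\Theta_\infty + t\nu$ and using monotonicity of $h_\infty$ in the $\nu$-direction forces $\Theta_\infty$ into the region where $h_\infty\equiv 2$, and Alexandrov's theorem then identifies $\Theta_\infty$ as a disjoint union of unit Euclidean balls. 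In particular $A^{h_\infty}(\Theta_\infty)\ge A^2(B_1) = \tfrac{4\pi}{3}$.

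Finally, since all connected components of $\Omega_j$ are assumed to drift to infinity and the total area of $\Sigma_j$ is uniformly bounded, there are only finitely many components, and the above analysis applies to each one. Summing the energy contributions gives
\[
\omega = \lim_{j\to\infty} A^{2\zeta_{R_j}}(\Omega_j) \ge A^2(B_1) = \tfrac{4\pi}{3},
\]
which contradicts hypothesis $(\mathrm{H})$. The main obstacle is the convergence step: one must verify that the translated data $(g_j',h_j')$ converge in a strong enough sense on compact sets (smoothly, with uniform $C^k$ bounds) so that the PMC compactness theorem applies in a non-constant ambient metric, and that the limit lives genuinely in the Euclidean model rather than in some metric deformation thereof. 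The asymptotic flatness bounds $h_{ij}=O(r^{-1})$, $h_{ij,k}=O(r^{-2})$ are exactly what is needed to push this through.
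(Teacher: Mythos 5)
Your proof is correct and follows essentially the same route as the paper: the paper's own proof simply translates the drifting components in the asymptotically flat coordinates, observes that the translated metrics converge smoothly to the Euclidean metric and the translated prescription functions converge to $2\zeta(x\cdot\nu+\tau)$, and then declares the rest identical to Proposition \ref{noinf}. You have merely written out that reduction in full detail, including the application of the compactness theorem and the analog of Lemma \ref{classi}, which is exactly what the paper intends.
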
 

\begin{proof}
Pick a sequence of points $p_j\in M$ diverging to infinity.  Let $B$ be a Euclidean ball of a fixed large radius.  Define the translated functions $\zeta_j'(x) = \zeta_j(p_j+x)$ and the translated metrics $g_{ij}'(x) = g_{ij}(p_j+x)$ on $B$.  Then $g_{ij}'$ converges smoothly to the flat metric.  Also there is a unit vector $\nu$ and a constant $\tau$ such that, up to a subsequence, $\zeta_j'$ converges smoothly to $\zeta(x\cdot \nu + \tau)$.  Therefore the proof is identical to Proposition \ref{noinf}.
\end{proof}

As in the prescribed mean curvature case, the energy identity holds provided there are no 2-CMCs with negative energy. 

\begin{corollary}
Let $(M^3,g)$ be an asymptotically flat manifold satisfying the assumptions of Theorem \ref{main2}.  Assume that there are no smooth, almost embedded $c$-CMCs $\Gamma = \bd \Theta$ with $A^c(\Theta) < 0$. Then there is a smooth, almost embedded $c$-CMC $\Sigma = \bd \Omega$ with $A^c(\Omega) = \omega$. 
\end{corollary}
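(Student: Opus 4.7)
The strategy mirrors Corollary \ref{enid} in the PMC setting. The key observation, implicit in the proof of the preceding proposition, is that each connected component of $\Omega_j$ that drifts to infinity contributes energy at least $A^2(B_1) = \frac{4\pi}{3}$ to $A^{2\zeta_{R_j}}(\Omega_j)$ in the limit, whereas each bounded component contributes the $A^c$-energy of a smooth almost-embedded $c$-CMC. Combining these two facts with hypothesis $(H)$ (which gives $\omega < \frac{4\pi}{3}$) and the standing assumption that no $c$-CMC has negative energy forces every component to stay bounded.

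In more detail, I would start from the surfaces $\Sigma_j = \bd \Omega_j$ produced by Proposition \ref{minmax}, so that $A^{2\zeta_{R_j}}(\Omega_j) = \omega(R_j) \to \omega$. The uniform area bound together with the diameter bound on connected components yields a uniform bound on the number of components of $\Omega_j$; after passing to a subsequence I may assume this number is a fixed integer. Classify each component as either \emph{bounded} (distance to $K$ stays bounded along the subsequence) or \emph{drifting} (distance to $K$ tends to infinity).

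For each bounded component the PMC compactness theorem (\cite{Z}, Theorem 2.6) applies, together with the fact that $\zeta_{R_j}\to 1$ smoothly on compact sets, producing an almost-embedded $2$-CMC limit $\Gamma_\infty^{(k)} = \bd \Theta_\infty^{(k)}$; the energies $A^{2\zeta_{R_j}}(\Theta_j^{(k)})$ converge to $A^2(\Theta_\infty^{(k)})$, which is nonnegative by the hypothesis of the corollary. For each drifting component, the argument from the preceding proposition applies verbatim: translating by a point $p_j$ lying on the component, the translated metrics converge smoothly to the Euclidean metric and the translated $\zeta_{R_j}$ converge to $\zeta(x\cdot \nu + \tau)$ for some unit vector $\nu$ and constant $\tau$, so the translated component converges to an almost-embedded $2\zeta(x\cdot \nu+\tau)$-PMC in $\R^3$. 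By Lemma \ref{classi} this limit is a union of unit spheres lying in the region $\{\zeta(x \cdot \nu+\tau) = 1\}$, and therefore carries energy at least $A^2(B_1) = \frac{4\pi}{3}$. Passing to the limit yields
\begin{equation*}
\omega \;\ge\; \sum_{k \text{ bounded}} A^2(\Theta_\infty^{(k)}) \;+\; \tfrac{4\pi}{3}\cdot \#\{\text{drifting components}\},
\end{equation*}
and combined with $\omega < \frac{4\pi}{3}$ and the nonnegativity of each bounded summand, this rules out any drifting component.

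With no drifting component, the whole sequence $\Omega_j$ lies in a fixed bounded region of $M$ and smooth convergence (away from finitely many points) yields the desired almost-embedded $2$-CMC $\Sigma = \bd \Omega$ with $A^2(\Omega) = \omega$. The main obstacle is the energy additivity in the limit: one must verify that the bounded and drifting components remain uniformly separated in $M$ (so that $A^{2\zeta_{R_j}}(\Omega_j)$ genuinely decomposes as the sum of the component energies in the limit) and that no energy is lost at the finitely many singular points of smooth convergence. Both are standard consequences of the uniform area and index bounds from Proposition \ref{minmax}, but they require careful bookkeeping.
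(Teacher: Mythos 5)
Your proposal is correct and follows the same route as the paper, which simply observes (as in Corollary \ref{enid}) that any component drifting to infinity carries energy at least $A^2(B_1)=\tfrac{4\pi}{3}$ by the argument of the preceding proposition, while bounded components have nonnegative energy by hypothesis, so $\omega<\tfrac{4\pi}{3}$ rules out drifting. Your additional remarks on energy additivity and component separation are careful bookkeeping the paper leaves implicit, but the underlying argument is the same.
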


\begin{proof}
Identical to Corollary \ref{enid}.
\end{proof}

\subsection{Examples} 

In this section, we construct a family of examples to which Theorem \ref{main2} applies.  Let $M = \R^3$ and fix a function $v \f \R^3\to \R$ such that 
$
v = {1}/{r}
$
near infinity.  Then the conformal metrics 
\[
g_t = (1+tv)^4 g_{\text{euc}}
\]
are asymptotically flat. Let $B_r$ denote a ball of radius $r$ centered at the origin in the Euclidean metric.  For $t > 0$, the sign of the scalar curvature of $g_t$ at $x$ is equal to the sign of $-\lap v(x)$. 

\begin{prop}
Assume that $\lap v < 0$ on $B_2$.  Then 
\[
\frac{d}{dt}\eval_{t=0} \bigg[\area_{g_t}(\bd B_1) - 2 \vol_{g_t}(B_1)\bigg] < 0.
\]
\end{prop}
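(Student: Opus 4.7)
The plan is to reduce the claim to a one-variable monotonicity statement about the spherical average of $v$, and then use $\Delta v < 0$ on $B_2$ to verify that monotonicity.

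First I would compute the derivatives explicitly. Since $g_t = (1+tv)^4 g_{\text{euc}}$, in dimension three the volume form scales as $dV_{g_t} = (1+tv)^6\, dV$ and the area form on hypersurfaces scales as $dA_{g_t} = (1+tv)^4\, dA$. Differentiating at $t=0$ gives
\[
\frac{d}{dt}\bigg|_{t=0} \area_{g_t}(\bd B_1) = 4\int_{\bd B_1} v, \qquad \frac{d}{dt}\bigg|_{t=0} \vol_{g_t}(B_1) = 6\int_{B_1} v.
\]
Therefore the derivative in question equals $4\int_{\bd B_1} v - 12\int_{B_1} v$, and the claim reduces to the pointwise inequality
\[
\int_{\bd B_1} v < 3\int_{B_1} v.
\]

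Next I would introduce the spherical mean $\bar v(r) = \frac{1}{4\pi r^2}\int_{\bd B_r} v \, dA$ and rewrite the desired inequality as $\bar v(1) < 3\int_0^1 r^2 \bar v(r)\, dr$, using the coarea formula $\int_{B_1} v = \int_0^1 4\pi r^2 \bar v(r)\, dr$. The standard identity
\[
\bar v'(r) = \frac{1}{4\pi r^2}\int_{\bd B_r}\frac{\bd v}{\bd \nu}\, dA = \frac{1}{4\pi r^2}\int_{B_r}\Delta v \, dV
\]
combined with the hypothesis $\Delta v < 0$ on $B_2$ shows $\bar v$ is strictly decreasing on $(0,2)$. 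Hence $\bar v(r) > \bar v(1)$ for every $r\in (0,1)$, and
\[
3\int_0^1 r^2 \bar v(r)\, dr > 3\bar v(1)\int_0^1 r^2\, dr = \bar v(1),
\]
which is exactly the required inequality after multiplying through by $4\pi$.

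There is no real obstacle here; the only points worth getting right are the exponents in the conformal volume and area scalings (that is what makes the coefficient $3 = 6/2 \cdot \text{(boundary factor)}/\ldots$ come out favorably), and the direction of monotonicity of $\bar v$, which comes directly from $\Delta v < 0$ on $B_2$ together with the Green/divergence identity for the radial derivative of a spherical average. Since the ball $B_1$ of radius one lies well inside $B_2$, the hypothesis is used only on $B_1 \subset B_2$, so no boundary issue arises.
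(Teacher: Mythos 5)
Your proposal is correct, and the reduction to the inequality $\int_{\bd B_1} v < 3\int_{B_1} v$ via the conformal scalings $(1+tv)^4$ for area and $(1+tv)^6$ for volume is exactly the paper's first step. Where you diverge is in how that inequality is proved. The paper introduces the auxiliary function $\phi(r) = r^{-2}\int_{\bd B_r} v - 3r^{-3}\int_{B_r} v$ (up to a factor of $4\pi$, the difference between the spherical average and the solid average), derives the differential inequality $(r^3\phi(r))' = r\int_{B_r}\lap v < 0$, and integrates from $r=0$ using $\phi(r)\to 0$. You instead invoke the classical fact that the spherical mean $\bar v(r)$ of a superharmonic function is strictly decreasing, so $\bar v(r) > \bar v(1)$ on $(0,1)$, and then the weighted comparison $3\int_0^1 r^2\bar v(r)\,dr > \bar v(1)$ follows since $3\int_0^1 r^2\,dr = 1$. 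The two arguments are essentially equivalent in content (both ultimately rest on the identity $\frac{d}{dr}\bar v(r) = \frac{1}{4\pi r^2}\int_{B_r}\lap v$), but yours is the more standard sub-mean-value-property formulation and avoids the integrating-factor manipulation; the paper's version packages the comparison into a single monotone quantity $r^3\phi(r)$. Your observation that the hypothesis is only needed on $B_1$ is also accurate (the paper's conclusion $r^3\phi(r)<0$ on all of $(0,2]$ is stronger than what the proposition requires).
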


\begin{proof}
Observe that 
\[
\area_{g_t}(\bd B_1) - 2 \vol_{g_t}(B_1) = \int_{\bd B_1} (1+tv)^4 - 2 \int_{B_1} (1+tv)^6,
\]
where the integrals are taken with respect to the Euclidean surface measure and the Euclidean volume respectively. Thus 
\[
\frac{d}{dt}\eval_{t=0} \bigg[\area_{g_t}(\bd B_1) - 2 \vol_{g_t}(B_1)\bigg] = 4\int_{\bd B_1} v - 12\int_{B_1} v.
\]
Define the auxiliary function 
\[
\phi(r) = \frac{1}{r^2}\int_{\bd B_r} v - \frac{3}{r^3}\int_{B_r} v.
\]
To prove the proposition, we must check that $\phi(1) < 0$. To this end, note that 
\begin{align*}
\phi'(r) &= \frac{1}{r^2}\int_{\bd B_r} \bd_\nu v - \frac{3}{r^3} \int_{\bd B_r}v + \frac{9}{r^4} \int_{B_r} v\\
&= \frac{1}{r^2}\int_{B_r} \lap v - \frac{3}{r}\phi(r). 
\end{align*} 
Multiplying this by $r^3$ and using the fact that $\lap v < 0$ gives that
\[
r^3 \phi'(r) + 3r^2 \phi(r) < 0.
\]
Finally, integrating and using the fact that $\phi(r) \to 0$ as $r\to 0$ proves that $r^3\phi(r) < 0$ for all $r \in (0,2]$. In particular, this implies that $\phi(1) < 0$, as needed. 
\end{proof}

This has the following corollary. 

\begin{corollary}
Assume that $\lap v < 0$ on $B_2$.  Then there is an $\eps = \eps(v) > 0$ such that the one parameter min-max width for $A^2$ satisfies
\[
\omega(g_t) < \frac{4\pi}{3} 
\]
for all $t\in (0,\eps)$. In particular, $(M,g_t)$ contains a closed, almost-embedded $2$-CMC for all $t\in(0,\eps)$ by Theorem \ref{main2}. 
\end{corollary}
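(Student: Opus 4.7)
The plan is to exhibit, for each sufficiently small $t > 0$, an explicit $2$-mountain pass path in $(M, g_t)$ whose maximum $A^2_{g_t}$-energy is strictly less than $4\pi/3$, at which point Theorem \ref{main2} will immediately deliver a closed, almost-embedded $2$-CMC.

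First I would fix $t_0 > 0$ small and a large radius $R$, chosen so that $A^2_{g_t}(B_R) < 0$ for all $t \in [0, t_0]$; this is possible because $v = 1/r$ near infinity and each $g_t$ is asymptotically flat, so
\[
A^2_{g_t}(B_R) = 4\pi R^2 - \tfrac{8\pi}{3} R^3 + O(R^2)
\]
uniformly in $t \in [0, t_0]$. Then the concentric family $\{B_{sR}\}_{s \in [0,1]}$ is an $\mathbf F$-continuous $2$-mountain pass path in $(M, g_t)$, so writing $F(t, r) := A^2_{g_t}(B_r)$, it suffices to show
\[
\sup_{r \in [0, R]} F(t, r) < \frac{4\pi}{3}
\]
for all sufficiently small $t > 0$.

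The key analytic input is the previous proposition, which says $\partial_t F(0, 1) < 0$. Combined with the fact that $F(0, r) = 4\pi r^2 - \tfrac{8\pi}{3} r^3$ is uniquely maximized at $r = 1$ with value $4\pi/3$, the plan is a two-region split. Near the maximum: continuity of $\partial_t F(0, \cdot)$ yields $\delta, c > 0$ with $\partial_t F(0, r) \le -c$ on $[1 - \delta, 1 + \delta]$, and a first-order Taylor expansion in $t$ then gives $F(t, r) \le 4\pi/3 - (c/2) t$ uniformly there for small $t$. Away from the maximum: on the compact set $[0, 1 - \delta] \cup [1 + \delta, R]$, $F(0, \cdot)$ is bounded strictly below $4\pi/3$ by some $\eta > 0$, and joint continuity of $F$ on $[0, t_0] \times [0, R]$ propagates this to $F(t, r) \le 4\pi/3 - \eta/2$ for all small $t$. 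Combining both regions produces the required $\eps > 0$ with $\omega(g_t) < 4\pi/3$ on $(0, \eps)$, and Theorem \ref{main2} finishes the argument.

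I do not anticipate a serious obstacle: the previous proposition does all the substantive analytic work by supplying the infinitesimal strict decrease at $r = 1$, and the remainder is a routine compactness/continuity exercise on $[0, t_0] \times [0, R]$. The only mild care is in verifying that $A^2_{g_t}(B_R) < 0$ holds uniformly in $t \in [0, t_0]$ for some fixed large $R$; this relies on the explicit form $v = 1/r$ near infinity together with the asymptotic flatness of $g_t$ to control the $O(R^2)$ remainder in the expansion of $A^2_{g_t}(B_R)$.
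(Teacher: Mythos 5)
Your proposal is correct, and since the paper states this corollary without a written proof, your argument is precisely the intended one: the preceding proposition supplies the strict first-order decrease of $A^2_{g_t}(B_1)$ at $t=0$, and the sweepout by concentric balls $\{B_{sR}\}$ (which the paper uses elsewhere, e.g.\ in Proposition \ref{estmm}) combined with the two-region continuity/Taylor argument gives $\omega(g_t)<4\pi/3$ for small $t>0$. The uniformity checks you flag (uniform $O(t^2)$ remainder and $A^2_{g_t}(B_R)<0$ uniformly in $t$) are exactly the right ones and go through since $F(t,r)$ is given by explicit integrals of $(1+tv)^4$ and $(1+tv)^6$ with $v=1/r$ near infinity.
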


\end{document}